\documentclass[12pt]{article}
\usepackage{geometry}
\geometry{a4paper}
\usepackage{graphicx}
%
%
\usepackage{latexsym}
\usepackage{amsthm}
\usepackage{amsmath}
\usepackage{amssymb}
\usepackage[utf8]{inputenc}
\usepackage{stmaryrd}
\usepackage{bm}
\usepackage{bbm}
\usepackage[square,authoryear]{natbib}
\usepackage{float}
\usepackage[lofdepth,lotdepth]{subfig}
\usepackage{appendix}
\usepackage{algorithm}
\usepackage{algpseudocode}
\usepackage{algorithmicx}
\usepackage{authblk}
 
\usepackage{xcolor}
 
%
\newtheorem{theorem}{Theorem}
\newtheorem{lemma}{Lemma}
\newtheorem{proposition}{Proposition}
\newtheorem{corollary}{Corollary}
\newtheorem{definition}{Definition}
\newcommand{\intd}{\mathrm{d}}
\newcommand{\U}{\mathcal{U}}
\newcommand{\R}{\mathbb{R}}
\newcommand{\N}{\mathbb{N}}
\newcommand{\I}{\mathbb{I}}
\newcommand{\expect}{\mathbb{E}}
\newcommand{\proba}{\mathbb{P}}
\newcommand{\F}{\mathcal{F}}
\newcommand{\probaspace}{\mathcal{P}}
\newcommand{\domainD}{\mathcal{D}}
\newcommand{\X}{\mathcal{X}}
\newcommand{\Z}{\mathcal{Z}}
\newcommand{\C}{\mathcal{C}}
\newcommand{\T}{\textsf{T}}
\newcommand{\G}{\mathcal{G}}
\newcommand{\Div}{\mathrm{div}}
\newcommand{\Y}{\mathcal{Y}}
\newcommand{\setS}{\mathcal{S}}

\newcommand{\conv}[3]{\underset{#1\rightarrow #2}{\overset{#3}{\longrightarrow}}}

\newcommand{\Cov}{\mathrm{Cov}}

\title{Mean field approximation of a heterogeneous population of plants in competition}
\author[,1]{Antonin Della Noce\thanks{Corresponding author : \texttt{antonin.della-noce@centralesupelec.fr}}}
\author[2]{Amélie Mathieu}
\author[1]{Paul-Henry Cournède}
\affil[1]{\normalsize Laboratoire MICS, CentraleSupélec, Université Paris-Saclay, 91190, Gif sur Yvette, France}
\affil[2]{\normalsize INRA AgroPariTech, Route de la Ferme, 78850, Thiverval-Grignon, France}
\date{}

\begin{document}
\maketitle

\begin{abstract}
The processes of interplant competition within a field are still poorly understood. However, they explain a large part of the heterogeneity in a field and may have longer-term consequences, especially in mixed stands. Modeling can help to better understand these phenomena but requires simulating the interactions between different individuals. In the case of large populations, assessing the parameters of a heterogeneous population model from experimental data is intractable computationally. This paper investigates the mean-field approximation of large dynamical systems with random initial conditions and individual parameters, and with interaction being represented by pairwise potentials between individuals. Under this approximation, each individual is in interaction with an infinitely-crowded population, summarized by a probability measure, the mean-field limit distribution, being itself the weak solution of a non-linear hyperbolic partial differential equation. In particular, the phenomenon of chaos propagation implies that the individuals are independent asymptotically when the size of the population tends towards infinity. This result provides perspectives for a possible simplification of the inference problem. The simulation of the mean-field distribution, consisting in a semi-Lagrangian scheme with an interpolation step using Gaussian process regression, is illustrated for a heterogeneous population model representing plants in competition for light.
\end{abstract}

\section{Introduction}
\label{intro}
The interest for modelling heterogeneous populations of plants is on the rise, especially due to the development of the practice of mixed cropping (\cite{Malezieux2009}). Mixing different varieties or different species (\cite{Tang2018}) may have various advantages, such as nitrogen transfer from one species to another, resistance of the population to disease and pests (\cite{Gurr2003}), or enhanced production quality (\cite{Gooding2007} for wheat). However, up to our knowledge, very few models are made to understand the emerging properties of such mixture and to design optimal crops (cf. \cite{Gaudio2019} for a review). A convenient framework for modelling heterogeneous populations is hierarchical modelling, also known as mixed-effects modelling (\cite{Schneider2006}, \cite{Lv2008}, \cite{Baey2016}). A classical formulation of hierarchical model of a population dynamics can be represented as a dynamical system, whose initial conditions and parameters are independent and identically distributed random variables.
\begin{equation}
  \forall i \in \llbracket 1;N\rrbracket,~\left\{\begin{array}{l}
  (  X_i^0, \theta_i) \sim \mu_0\mathrm{~a~probability~measure}\\
   X_i(0) = X_i^0\\
  \displaystyle \frac{\intd  X_i(t)}{\intd t} = F\left( X_i(t),\theta_i;( X_j(t), \theta_j)_{1\leq j\leq N}\right)
  \end{array}\right.
  \label{system_initial}
\end{equation}
$N$ is the number of individuals in the population. Each individual is indexed by integer $i\in \llbracket 1;N\rrbracket$ and is described by a state variable $ X_i$ and individual parameter $ \theta_i$. The state variable $ X_i$ represents time-varying features of the plant, e.g. the size of its aerial part, the total leaf area, etc. The individual parameter $ \theta_i$ represents intrinsic characteristics of individual $i$, that are assumed to be constant throughout the considered time period, and that have influence on the population dynamics. $F$ is a function modelling the influence of the whole population, consisting in the collection $( X_j, \theta_j)_{1\leq j\leq N}$, on the individual development of each plant. A specific form of $F$ is going to be studied in the present article (see equation (\ref{system_g})). The heterogeneity of the population is represented by the probability measure $\mu_0$, that distributes the initial state variable and individual parameter to each individual at the population level. If the marginal distribution of variable $ \theta$, $\mu_0^\theta$, is not reduced to a Dirac distribution, or equivalently if $ \theta$ is not constant over the population, then the population is said to be heterogeneous, as it gathers individuals with different characteristics. The case of homogeneous population has been investigated for example in \cite{Cournede2007}, \cite{Sievanen2008} focusing on the competition between plants.

The problem of statistical inference on such population model consists in identifying distribution $\mu_0$ and function $F$ from collected observation data. In the case where the plants do not interact with each other, various forms of Expectation - Maximization (EM) algorithm, introduced by \cite{Dempster1977}, can be applied to estimate the parameters (\cite{Baey2016}),\cite{Baey2018}, or direct Bayesian inference (\cite{Viaud2018}, chapter 4). Most common forms of EM algorithm and of direct Bayesian inference require a random exploration of the unknown parameter space using Metropolis-Hasting (MH) algorithm, or Metropolis-Hasting within Gibbs (MHWG) algorithmn, which are not suited for the exploration of high-dimensional space in terms of convergence time (\cite{Katafygiotis2008}). Nevertheless, EM algorithm or MHWG algorithm remain efficient tools for parameter estimation in a population model without interaction.

The relative effectiveness of these algorithms is challenged when taking into account interactions within the population model. The correlations between individuals hinder the distribution of the computation and the search space where MH algorithm is applied is of too high dimension, proportional to the number $N$ of individuals (cf. the computational issue encountered in \cite{Schneider2006}). The aim of this research is to suggest other methods more suited to this problem.

A possible research direction is given by variational Bayesian approximation (cf. \cite{Bishop2006}, chapter 10, for an introduction). This method consists in projecting the joint distribution of the random variables $( X_i(t),\theta_i)_{1\leq i\leq N}$, which is non-factorized due to individuals interaction, onto a tensor product of parametric distributions. For specific expression of the function $F$, such as the interaction function used in the Cucker-Smale model (\cite{Cucker2007}, \cite{Carrillo2010}), any subset of the the population has a joint distribution asymptotically factorized as $N\rightarrow +\infty$, a phenomenon referred as chaos propagation in the literature (\cite{Bolley2011}). Qualitatively, when the population is infinite, the states at time $t$ and parameters of the individuals behave as if they were independent random variables distributed according to a single probability measure $\mu[t]$, that is called the mean field limit (MFL) distribution.

The question on how to integrate the MFL distribution $\mu[t]$ into the process of statistical inference is beyond the scope of this article. The first step is to check for which kind of heterogeneous population models we can obtain theoretical existence and uniqueness of MFL distribution, along with asymptotic factorization property. We have considered plant population models for which the interaction function $F$ can be decomposed as a sum of elementary interaction functions over the whole population.
\begin{equation}
   \forall i \in \llbracket 1;N\rrbracket,~F\left( X_i(t),\theta_i;( X_j(t), \theta_j)_{1\leq j\leq N}\right) = \frac{1}{N-1}\sum_{1\leq j\leq N, j\neq i} g( X_i(t), \theta_i, X_j(t), \theta_j)
   \label{system_g}
\end{equation}
Such formulation is used in \cite{Schneider2006}, \cite{Lv2008}, \cite{Nakagawa2015}. These models focus on the competition for light within the plant population. \cite{Schneider2006} suggests various models coupling plant development and competition. Amongst the models being smooth enough, we have chosen the one with the most statistical relevance. This model is described in section 2. Equation (\ref{system_g}) is quite close in its formulation to particle systems studied in kinetic equation theory (\cite{Carrillo2010}). The normalization by the size of the population is important for the study of the asymptotic behavior of the population as $N$ tends towards infinity. The derivative of an individual state remains of the same order of magnitude when the size of the population changes. In our case, this normalization is part of the model expression, but for other systems (such as the ones studied in statistical physics), this normalization can be interpreted as a change of time scale (\cite{Golse2013}). Other normalization can be considered in some flocking model, like Vicsek model (\cite{Vicsek1995}, \cite{Degond2018}) where the velocity is normalized by the sum of all the velocities in the population. We shall specify in the next section the assumptions to be made on $g$ and on $\mu_0$ to derive the MFL distribution. We give also an example of plant competition model from \cite{Schneider2006} to illustrate the theoretical development in section 3 to prove existence and uniqueness of MFL distribution, and finally chaos propagation. As our initial aim is to be able to use the MFL distribution for statistical inference, we present a preliminary work in section 4 to approximate this distribution.

\section{Example and assumptions}

\subsection{Working assumptions and notations}
\label{sub_working_assumptions}

In this subsection, we specify our assumptions on the systems (\ref{system_initial}). Let $(\Omega,\F,\proba)$ be a probability space. Let $\X$ be an Euclidean space of dimension $d_\X$ and $\Theta$ be a compact subset of an Euclidean space, such that the dimension of $\Theta$ is $d_\Theta$. The phase space is denoted by $\Z = \X\times \Theta$ and the set of probability measures defined over $\Z$ is denoted by $\probaspace(\Z)$. The set of probability measures $\probaspace(\Z)$ is associated to the space of random variables, i.e. the space of functions $f:\Omega\rightarrow \Z$ measurable for the measure $\proba$. $\Z$ is often endowed with the Lebesgue measure, denoted by $\lambda^{\otimes d_z}$ ($d_z = \mathrm{dim}(\Z)$). Unless otherwise stated, the metric used on $\Z$ is defined by
\begin{equation}
  \forall  z = ( X, \theta) \in \Z,~| z| = \sum_{i = 1}^{d_\X+d_\Theta}\frac{|z_i|}{|z_i^*|}
\end{equation}
where $ z^*\in \Z$ is a reference vector with components all non-zero, and $|.|:a\in \R\mapsto |a|$ is the absolute norm over $\R$. The norm $| z|$ is therefore a dimensionless quantity. Similarly, we use the notation $\displaystyle \forall X\in \X,~|X| = \sum_{i = 1}^{d_\X}\frac{|X_i|}{|X_i^*|}$ and $\displaystyle\forall \theta\in \Theta,~ |\theta| = \sum_{i = 1}^{d_\Theta}\frac{|\theta_i|}{|\theta_i^*|}$. We consider the population model of initial distribution $\mu_0\in \probaspace(\Z)$ a probability measure over $\Z$ and of interaction function $g:\Z\rightarrow \X$.
\begin{equation}
  \begin{aligned}
    &(X_i^0,\theta_i)_{1\leq i\leq N}\sim \mu_0^{\otimes N}\\
    &\forall i\in \llbracket 1;N\rrbracket,~\left\{\begin{array}{l}
    X_i(0) = X_i^0\\
    \forall t\in \R_+,~\displaystyle \frac{\intd X_i(t)}{\intd t} = \frac{1}{N-1}\sum_{j\neq i} g( X_i(t), \theta_i,  X_j(t), \theta_j)
    \end{array}\right.
  \end{aligned}
  \label{system_micro}
\end{equation}
We shall use two notations for the interaction function $g$ : either $g:({X_1,\theta_1,X_2,\theta_2})\in \Z^2\mapsto g({X_1,\theta_1,X_2,\theta_2})\in \X$, either $g:({z_1,z_2})\in \Z^2\mapsto g({z_1,z_2})\in \X$. Here are some assumptions on the smoothness of function $g$.
\paragraph{(A1) Assumption 1 : } There exists $K_1 >0$ such that for all $ X_1, X_2\in \X$ and all $ \theta_1, \theta_2\in \Theta$ $|g({X_1,\theta_1,X_2,\theta_2})|\leq K_1(1+|X_1|+|X_2|)$.
This assumption makes possible the existence of global solution over $\R_+$.

\paragraph{(A2) Assumption 2 : }There exists $K_2 > 0$ such that for all $X_1,X_1',X_2,X_2'\in \X$ and $\theta,\theta'\in \Theta$ we have
\begin{equation}
  |g(X_1,\theta,X_1',\theta')-g(X_2,\theta,X_2',\theta')|\leq K_2(1+|X_1'|+|X_2'|)(|X_1-X_2| + |X_1'-X_2'|)
\end{equation}

\newcommand{\M}{\mathcal{M}}
\paragraph{(A3) Assumption 3} : The transition function $g$ has a partial derivative with respect to the variable $X$, $(X,\theta,X',\theta')\in \Z^2\mapsto \displaystyle \frac{\partial g}{\partial X}(X,\theta,X',\theta')\in  \M_{d_\X}(\R)$, which is continuous and which is such that there exists $K_3 > 0$
\begin{equation}
  \begin{aligned}
    &\forall (X_1,\theta_1,X_2,\theta_2)\in \Z^2,\\
    &\left|\frac{\partial g}{\partial X}(X_1,\theta_1,X_1,\theta_2)\right| = \sup_{X\in \X,|X| = 1}\left|\frac{\partial g}{\partial X}(X_1,\theta_1,X_1,\theta_2).X\right|\leq K_3(1+|X_2|)
  \end{aligned}
\end{equation}

\paragraph{(A4) Assumption 4} : There exists a constant $K_4> 0$ such that for all $X,X'\in \X$ and $\theta_1,\theta_1',\theta_2,\theta'_2\in \Theta$\footnote{As $\Theta$ is not a vector space, we can have $\theta_1-\theta_2$ not belonging to $\Theta$. The notation $|\theta_1-\theta_2|$ has therefore to be understood as the norm of the vector $\theta_1-\theta_2$ in the Euclidean space containing the compact subset $\Theta$.}
\begin{equation}
  |g(X,\theta_1,X',\theta_1')-g(X,\theta_2,X',\theta'_2)|\leq K_4(1+|X|+|X'|)(|\theta_1-\theta_2|+|\theta'_1-\theta'_2|)
\end{equation}
The next subsection gives an example of differential system, where the interaction function $g$ satisfies all four assumptions listed above.

\subsection{Example of Schneider model}
\label{sub_schneider}

The article of \cite{Schneider2006} studies a population of plants (\textit{Arabidopsis thaliana}) in competition for light resources. A dozen of models, more or less empirical, are suggested in this paper to represent the growth of the aerial part of plants subject to the shade of its surroundings, and all these models are compared statistically against experimental data. A similar approach is carried out in \cite{Nakagawa2015} at the scale of a whole forest, observed for several decades. The population was then assumed to be homogeneous, certainly because of the computational issues previously mentioned.

In this model, the soil and water resources are assumed to be in abundance, so that the competition concerned only the light resource. Therefore, only the aerial part\footnote{In the case of \textit{A. thaliana}, $s$ can be the diameter of the rosette (see figure 1)} of the plant is represented by the model. A plant is described by the size of its aerial part $s$, its position $\vec{x} = (x,y)$ in the plane, and by two intrinsic factors $\gamma$ and $S$, determining properties of the individual growth. Over the time, only plants' sizes change. The assumptions of the model are the following :
\begin{enumerate}
\item If the plant grows in isolation, or if the influence of competitors can be neglected, the dynamics of its growth is given by a Gompertz function (\cite{Paine2012}).
  \begin{equation}
    \left\{\begin{array}{l}
    s(0) = s^0\\
    \displaystyle \forall t\in \R_+,~\frac{\intd s(t)}{\intd t} = \gamma s(t)\log\left(\frac{S}{s(t)}\right)
    \end{array}\right.\Rightarrow s(t) = S\exp\left(-e^{-\gamma t}\log\left(\frac{S}{s^0}\right)\right)
  \end{equation}
  The size of the plant converges towards an equilibrium size $S$ with rate $\gamma$. In a more accurate modelling, this equilibrium size should be a function of the environmental conditions, but they are not taken into account here (the light environment is assumed to be controlled). The initial size of the plant $s^0 > 0$ can be thought as the size of the sprout just after emergence.
\item If the plant grows in presence of competitors, in a population consisting of $N$ individuals, the equilibrium size $S_i$ of the individual $i\in \llbracket 1;N\rrbracket$ is perturbed by a factor representing the negative impact of the competition.
  \begin{equation}
    \begin{aligned}
      & \forall i\in \llbracket 1;N\rrbracket,\\
      &\left\{\begin{array}{l}
      s_i(0) = s_i^0\\
      \displaystyle \forall t\in \R_+,~\frac{\intd s_i(t)}{\intd t} = \gamma_is_i(t)\left(\log\left(\frac{S_i}{s_m}\right)\left(1-\frac{1}{N-1}\sum_{j\neq i}C(s_i(t),s_j(t),|\vec x_i-\vec x_j|)\right)\right.\\
      \displaystyle \left.-\log\left(\frac{s_i(t)}{s_m}\right)\right)
      \end{array}\right.
    \end{aligned}
    \label{system_schneider}
  \end{equation}
  where $\displaystyle C(s_i,s_j,|\vec x_i-\vec x_j|) = \frac{\log(s_j/s_m)}{2R_M\displaystyle\left(1+\frac{|\vec{x}_i-\vec{x}_j|^2}{\sigma_x^2}\right)}\left(1+\tanh\left(\frac{1}{\sigma_r}\log\left(\frac{s_j}{s_i}\right)\right)\right)$ with $s_m,\sigma_x,\sigma_r$ being known positive constants and  $R_M$ such that $\forall i\in \llbracket 1;N\rrbracket~\displaystyle \log\left(\frac{S_i}{s_m}\right)\leq R_M$.
\end{enumerate}
\begin{figure}[ht]
  \begin{center}
    \includegraphics[width = 0.5\textwidth]{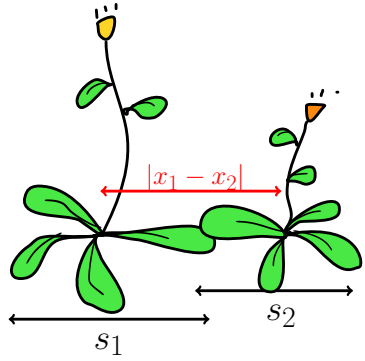}
  \end{center}
  \caption{Parametrization of the competition model : the competition exterted by plant 2 over plant 1 depends on the respective sizes of the plants and on their distance.}
\end{figure}
In presence of competition, the available light environment of plant $i$, represented by the term $\displaystyle \log\left(\frac{S_i}{s_m}\right)$, is reduced by a competition factor $\displaystyle 1-\frac{1}{N-1}\sum_{1\leq j\leq N,j\neq i}C(s_i(t),s_j(t),|\vec x_i-\vec x_j|)$, which is dimensionless and takes values in $[0;1]$. The competition exerted on plant $i$ is all the more important than other plants are
\begin{enumerate}
\item tall in absolute terms, with the factor $\displaystyle\frac{\log(s_j/s_m)}{R_M}$
\item taller than plant $i$, with the factor $\displaystyle \frac{1}{2}\left(1+\tanh\left(\frac{1}{\sigma_r}\log\left(\frac{s_j}{s_i}\right)\right)\right)$
\item close to plant $i$, with the factor $\displaystyle\frac{1}{\displaystyle\left(1+\frac{|\vec{x}_i-\vec{x}_j|^2}{\sigma_x^2}\right)}$
\end{enumerate}

There exist more realistic and complex models to represent competition for light in plants population. \cite{Beyer2015} describes tree crowns development by a transport equation on foliage density. In this model, light ressource is allocated to the different individuals proportionally to their foliage volume. More mechanistic models can be found in the literature, namely the ones making use of Functional Structural Plant Models (FSPM), where the light environment is directly computed by ray tracing through a 3D reconstruction of the canopy (\cite{Cieslak2008}). Such models of competition are still too complex for the method we describe in this article.

Before going any further, we need to prove that system (\ref{system_schneider}) is well-posed for any initial condition.
\begin{proposition}
  Let us consider the initial conditions $(s_i^0)_{1\leq i\leq N}\in (\R_+^*)^N$ and the collection of parameters $(x_i,y_i,S_i,\gamma_i)\in (\R^2\times (\R_+^*)^2)^N$. Then the system (\ref{system_schneider}) has an unique solution $ s_{1:N}:t\mapsto (s_i(t))_{1\leq i\leq N}$ defined over $\R_+$ taking positive values, i.e. verifying $\forall t\in \R_+,~\forall i\in \llbracket 1;N\rrbracket,~s_i(t) >0$.
\end{proposition}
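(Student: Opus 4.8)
The plan is a standard Cauchy--Lipschitz argument on the open region where the vector field is smooth, completed by an a priori bound that rules out finite-time blow-up or collision with the boundary $\{s_i=0\}$. First I would observe that the right-hand side of (\ref{system_schneider}), viewed as a map $F:(\R_+^*)^N\to\R^N$ with $F_i(s_{1:N})=\gamma_i s_i\bigl(\log(S_i/s_m)(1-\tfrac1{N-1}\sum_{j\neq i}C(s_i,s_j,|\vec x_i-\vec x_j|))-\log(s_i/s_m)\bigr)$, is of class $C^1$ on the open set $\mathcal{O}:=(\R_+^*)^N$, since $C$ is a composition of $\log$, $\tanh$ and a rational function with nonvanishing denominator, all evaluated at strictly positive arguments. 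The Picard--Lindelöf theorem then yields, for the datum $s_{1:N}(0)=(s_i^0)_i\in\mathcal{O}$, a unique maximal solution $s_{1:N}:[0,T_{\max})\to\mathcal{O}$ with $T_{\max}\in(0,+\infty]$; positivity $s_i(t)>0$ holds on the whole interval of existence by construction (equivalently, integrating $\dot s_i=\gamma_i s_i(\cdots)$ gives $s_i(t)=s_i^0\exp(\int_0^t\gamma_i(\cdots)\,\intd\tau)>0$ as long as the integrand is defined). So the only real content is to show $T_{\max}=+\infty$.

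To that end I would pass to the logarithmic variables $u_i(t):=\log(s_i(t)/s_m)$, well-defined on $[0,T_{\max})$, which satisfy the system $\dot u_i=\gamma_i(a_i(t)-u_i)$ with $a_i(t)=\log(S_i/s_m)\bigl(1-\tfrac1{N-1}\sum_{j\neq i}C(s_i(t),s_j(t),|\vec x_i-\vec x_j|)\bigr)$, affine in $u$ once the $C$-terms are regarded as coefficients. Using $1+\tanh(\cdot)\in[0,2]$ and $0<(1+|\vec x_i-\vec x_j|^2/\sigma_x^2)^{-1}\le1$, one gets $|C(s_i,s_j,r)|\le|\log(s_j/s_m)|/R_M=|u_j|/R_M$, hence, with $\Lambda:=\max_i|\log(S_i/s_m)|$ and $\gamma_{\max}:=\max_i\gamma_i$, the bound $|a_i(t)|\le\Lambda\bigl(1+W(t)/R_M\bigr)$ where $W(t):=\max_{1\le i\le N}|u_i(t)|$. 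Consequently $\tfrac{\intd}{\intd t}|u_i|\le|\dot u_i|\le\gamma_i(|a_i|+|u_i|)\le\gamma_{\max}\Lambda+\gamma_{\max}(\Lambda/R_M+1)W$, and since $W$ is locally Lipschitz with, for a.e.\ $t$, $\tfrac{\intd}{\intd t}W(t)=\tfrac{\intd}{\intd t}|u_{i(t)}(t)|$ for an index $i(t)$ achieving the maximum, $W$ obeys the same differential inequality $\dot W\le\gamma_{\max}\Lambda+\gamma_{\max}(\Lambda/R_M+1)W$.

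Grönwall's lemma then gives $W(t)\le\bigl(W(0)+\Lambda/(\Lambda/R_M+1)\bigr)\exp\bigl(\gamma_{\max}(\Lambda/R_M+1)\,t\bigr)$ on $[0,T_{\max})$, so $W$ stays bounded on every bounded subinterval. I would conclude with the blow-up alternative for $C^1$ ODEs: if $T_{\max}<+\infty$, the maximal solution must eventually leave every compact subset of $\mathcal{O}$; but setting $W^*:=\sup_{[0,T_{\max})}W<\infty$, the bound shows $s_i(t)=s_m e^{u_i(t)}\in[s_m e^{-W^*},s_m e^{W^*}]$ for all $t<T_{\max}$, i.e.\ the trajectory stays in a fixed compact subset of $(\R_+^*)^N$, a contradiction. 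Hence $T_{\max}=+\infty$, giving global existence, uniqueness and positivity.

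The only mildly delicate point is the coupling in the estimate of $a_i$: one should resist trying to bound each $u_i$ in isolation and instead close the Grönwall estimate on the single scalar $W(t)=\max_i|u_i(t)|$, exploiting that the $\tanh$ factor and the distance kernel in $C$ are uniformly bounded so that the feedback of $(u_j)_j$ onto $\dot u_i$ is at most linear. Everything else is routine; note in particular that the crucial stabilizing effect near $s_i=0$ is the term $-\log(s_i/s_m)=-u_i$, which makes $\dot u_i$ large and positive when $u_i$ is very negative, and this is exactly what the linear-in-$u$ structure captures.
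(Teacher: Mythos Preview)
Your proof is correct and follows essentially the same strategy as the paper: pass to the logarithmic variables $u_i=\log(s_i/s_m)$ (the paper's $r_i$), observe that the resulting vector field on $\R^N$ is locally Lipschitz with at most linear growth, and conclude global existence. The only packaging difference is that the paper works in the $r$-variables from the outset and applies Cauchy--Lipschitz directly to the $r$-system on all of $\R^N$ using the $\ell^1$ norm, whereas you first obtain a maximal solution in the $s$-variables on $(\R_+^*)^N$ and then close a Gr\"onwall estimate on $W=\max_i|u_i|$ via the blow-up alternative; the two routes are equivalent.
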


The existence of the solution of this system is a classical application of Cauchy-Lipschitz theorem to the system satisfied by the vector $r_{1:N}(t) = \left(\log\left(\frac{s_i(t)}{s_m}\right)\right)_{1\leq i\leq N}\in \R^N$. Details of the proof can be found in appendix \ref{proof_prop1}. We need also to check for which conditions the global solution given by proposition 1 is consistent with the biological assumptions of the model. A solution $s_{1:N}:t\in \R_+\mapsto s_{1:N}(t)\in \R_+^N$ is consistent with the assumptions of the model if it meets the following constraints :
\begin{itemize}
\item The size of each individual must remain below its equilibrium size and above the minimal size $s_m$, i.e. for all $i\in \llbracket 1;N\rrbracket,~ s_m< s_i(t)\leq S_i$.
\item The competition factor must remain in $[0;1]$, i.e. for all $i,j\in \llbracket 1;N\rrbracket$, $C(s_i(t),s_j(t),| x_i- x_j|)\in [0;1]$.
\end{itemize}
These conditions can be met if we set some conditions on the support of the initial distribution $\mu_0$. The next proposition gives sufficient conditions on the support of $\mu_0$ for these constraints to be verified.

\begin{proposition}
  Let $\displaystyle \mathcal{D} = \{(s,x,y,S,\gamma)\in [s_m;+\infty[\times \R\times \R\times [s_m;+\infty[\times \R_+|s_m < S\leq s_me^{R_M},~s_m<s\leq S\}$. Let $\mu_0$ be a probability over $\R^5$, i.e. $\mu_0\in \probaspace(\R^5)$, such that the support of $\mu_0$ is included in the interior of domain $\domainD$. Let $ Z_N^0 = (s_i^0,x_i,y_i,S_i,\gamma_i)_{1\leq i\leq N}$ be a random variable of distribution $\mu_0^{\otimes N}$ and $t\in \R_+\mapsto s_{1:N}(t, Z_N^0)$ the solution of system (\ref{system_schneider}) with initial configuration $ Z_N^0$. Then we have almost surely that for all time $t\in \R_+$, $(s_i(t, Z_N^0),x_i,y_i,S_i,\gamma_i)_{1\leq i\leq N}\in \mathring{\domainD}^N$, the interior of domain $\domainD^N$.
\end{proposition}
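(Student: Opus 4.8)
The plan is to show that the ``logarithmic box'' is forward invariant for the reduced system and then translate back. As in the proof of Proposition 1, put $r_i(t)=\log(s_i(t)/s_m)$ and $R_i^\ast=\log(S_i/s_m)$; Proposition 1 guarantees $s_i(t)>0$, so each $r_i$ is $C^1$ on $\R_+$ and (\ref{system_schneider}) rewrites as
\begin{equation*}
\frac{\intd r_i}{\intd t}=\gamma_i\Bigl(R_i^\ast\bigl(1-\tfrac{1}{N-1}\textstyle\sum_{j\neq i}C_{ij}(t)\bigr)-r_i(t)\Bigr),\qquad C_{ij}(t)=\frac{r_j(t)}{2R_M\bigl(1+|\vec x_i-\vec x_j|^2/\sigma_x^2\bigr)}\bigl(1+\tanh\tfrac{r_j(t)-r_i(t)}{\sigma_r}\bigr).
\end{equation*}
Since $\supp(\mu_0)\subset\mathring{\domainD}$, we have almost surely, for every $i$, that $\gamma_i>0$, $s_m<S_i<s_m e^{R_M}$ (equivalently $0<R_i^\ast<R_M$) and $s_m<s_i^0<S_i$ (equivalently $0<r_i(0)<R_i^\ast$); as $\vec x_i$, $S_i$, $\gamma_i$ do not evolve, proving the proposition reduces to showing $0<r_i(t)<R_i^\ast$ for all $t\ge 0$ and all $i$.

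Next I would record the two one-sided bounds that hold whenever $r_{1:N}(t)$ lies in the closed box $\overline{\mathcal{R}}=\prod_i[0,R_i^\ast]$. From $r_j\ge 0$ and $1+\tanh(\cdot)\ge 0$ one gets $C_{ij}(t)\ge 0$, hence $\intd r_i/\intd t\le\gamma_i(R_i^\ast-r_i)$; from $1+\tanh(\cdot)\le 2$ and $0\le r_j\le R_j^\ast\le R_M$ one gets $C_{ij}(t)\le r_j(t)/R_M\le 1$, so $\tfrac{1}{N-1}\sum_{j\neq i}C_{ij}(t)\le 1$ and $\intd r_i/\intd t\ge-\gamma_i r_i$. (This is the only place the bound $R^\ast\le R_M$ hard-coded into $\domainD$ is used, and it is exactly what keeps the competition factor in $[0,1]$.) A Grönwall/comparison argument then gives, on any interval on which the trajectory stays in $\overline{\mathcal{R}}$,
\begin{equation*}
r_i(0)\,e^{-\gamma_i t}\ \le\ r_i(t)\ \le\ R_i^\ast-\bigl(R_i^\ast-r_i(0)\bigr)e^{-\gamma_i t},
\end{equation*}
so that $r_i(t)$ is in fact trapped in the \emph{open} interval $(0,R_i^\ast)$ there.

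Finally I would globalise by the usual exit-time argument: let $T^\ast=\sup\{T\ge 0:\ r_{1:N}(t)\in\mathcal{R}\text{ for all }t\in[0,T]\}$, which is positive since $r_{1:N}(0)\in\mathcal{R}$ and $r_{1:N}$ is continuous. If $T^\ast<+\infty$, continuity gives $r_{1:N}(T^\ast)\in\overline{\mathcal{R}}$ with $r_i(T^\ast)\in\{0,R_i^\ast\}$ for some $i$; but the two bounds above hold up to and including $t=T^\ast$ (the estimates $0\le C_{ij}\le R_j^\ast/R_M$ pass to the limit), and the displayed inequalities then force $0<r_i(T^\ast)<R_i^\ast$ for every $i$ — a contradiction. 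Hence $T^\ast=+\infty$, the displayed inequalities hold for all $t\ge 0$, and the conclusion follows on the almost sure event $\{Z_N^0\in\mathring{\domainD}^N\}$. The substance of the argument is the change of variables together with the elementary bounds $0\le C_{ij}\le 1$; I expect the only point needing care to be the bookkeeping of the continuation step, namely checking that every inequality invoked is non-strict at the candidate exit time so that the comparison estimates remain valid there.
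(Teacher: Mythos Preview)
Your proposal is correct and follows essentially the same route as the paper: bound the right-hand side between the two Gompertz rates (zero competition and full competition) using $0\le C_{ij}\le 1$ on the domain, apply Gr\"onwall/comparison to obtain explicit two-sided bounds that remain strictly inside, and close with an exit-time contradiction. The only cosmetic difference is that you work throughout in the logarithmic variable $r_i=\log(s_i/s_m)$ while the paper's proof of this proposition stays in the $s$-variable; your displayed inequalities $r_i(0)e^{-\gamma_i t}\le r_i(t)\le R_i^\ast-(R_i^\ast-r_i(0))e^{-\gamma_i t}$ are exactly the paper's bounds $s_m\exp(e^{-\gamma_i t}\log(s_i^0/s_m))\le s_i(t)\le S_i\exp(-e^{-\gamma_i t}\log(S_i/s_i^0))$ after the change of variables.
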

The proof of this proposition can be found in appendix \ref{proof_prop2}. It is based on the fact that within domain $\mathcal{D}$, the evolution of each plant size is bounded between two growth rates, ensuring the size to remain within a biologically consistent interval.
\begin{equation*}
  \forall i\in \llbracket 1;N\rrbracket,~\forall t\in \R_+,~ \gamma_is_i(t)\log\left(\frac{s_m}{s_i(t)}\right)\leq \frac{\intd s_i(t)}{\intd t}\leq \gamma_is_i(t)\log\left(\frac{S_i}{s_i(t)}\right)
\end{equation*}
The trajectories associated respectively to the upper and the lower bound remain within the domain $\mathcal{D}$ when the initial condition is generated by a $\mu_0$ satisfying the assumptions of proposition 2.

For the sake of clarity, we give also an example of initial distribution $\mu_0$, that is the source of heterogeneity and randomness in the system represented by equation (\ref{system_schneider}). Let $(s^0,x,y,S,\gamma)$ be a random variable of distribution $\mu_0$. We have chosen the distribution $\mu_0$ such that the positions of plants are mutually independent, but with a spatial pattern on parameters $\gamma$ and $S$. In what follows, $\U([a;b])$ is the notation for the uniform distribution over the segment $[a;b]$.
\begin{equation}
  \begin{aligned}
    &x\sim \U([0;L])~~{y}\sim\U([0;L])\text{ and }x,y\text{ are independent}\\
    &S|x\sim\U([S_1(x);S_2(x)])\\
    &\text{with }S_1(x) = S_m + \frac{x}{L}(S_M-\sigma_S-S_m),~~S_2(x) = S_m + \sigma_S + \frac{x}{L}(S_M-\sigma_S-S_m)\\
    &\text{with }S_m\text{ and }\sigma_S\text{ such that }S_m > 0,~\sigma_S > 0,~S_m+\sigma_S < S_M\\
    &\gamma|y\sim\U([\gamma_1(y);\gamma_2(y)])\\
    &\text{with }\gamma_1(y) = \gamma_m + \frac{y}{L}(\gamma_M-\sigma_\gamma-\gamma_m),~~\gamma_2(y) = \gamma_m + \sigma_\gamma + \frac{y}{L}(\gamma_M-\sigma_\gamma-\gamma_m)\\
    &s^0\sim\delta_{s^0} \text{(the initial size of the plants is a constant over the population)}\\
  \end{aligned}
  \label{eq_mu0_schneider}
\end{equation}
This initial distribution $\mu_0$ implies that the plants are evenly distributed over the square $[0;L]^2$, that the plants with large values of $x$ are likely to be tall, and the ones with high values of $y$ are likely to grow fast. In this example, the initial distribution $\mu_0$ is not absolutely continuous with respect to the Lebesgue measure. However, the marginal distribution of the intrisic parameters $\theta = (x,y,S,\gamma)$ is absolutely continous with respect to $\lambda^{\otimes 4}$, the Lebesgue measure over $\R^4$. Let $p_0^\theta: \R^4\rightarrow \R_+$ be the density of $\theta$.
\begin{equation}
  \forall (x,y,S,\gamma)\in \R^4,~p_0^\theta(x,y,S,\gamma) = \frac{\I\{0\leq x,y\leq L\}\I\{S_1(x)\leq S\leq S_2(x)\}\I\{\gamma_1(y)\leq \gamma\leq \gamma_2(y)\}}{L^2\sigma_S\sigma_\gamma}
\end{equation}

We can therefore simulate the population model by first drawing samples from the distribution $\mu_0$, and finally by solving the differential system (\ref{system_schneider}) using standard numerical methods. In our case, we have used a simple Euler explicit method with a time step of $\Delta t = 0.1$ day. The following table gives the configuration used for the simulation.

\begin{table}[H]
  \begin{center}
    \begin{tabular}{|cccc|}
      \hline
      $L$             & \multicolumn{1}{c|}{1 m}                  & $S_M$          & 1 m                \\
      $S_m$           & \multicolumn{1}{c|}{0.8 m}                & $\gamma_M$     & 1 day$^{-1}$       \\
      $\gamma_m$      & \multicolumn{1}{c|}{0.1 day$^{-1}$}       & $\sigma_S$     & $10^{-2}$ m        \\
      $\sigma_\gamma$ & \multicolumn{1}{c|}{$10^{-2}$ day$^{-1}$} & $s^0$          & 0.3 m              \\
$s_m$           & \multicolumn{1}{c|}{$5.10^{-2}$ m}        & $R_M$          & $\log(S_M/s_m)$    \\
      $\sigma_x$      & \multicolumn{1}{c|}{$L$}                  & $\sigma_r$     & $\log(0.1/s_m)$    \\ \hline
      \multicolumn{2}{|c}{$\Delta t$}                            & \multicolumn{2}{c|}{0.1 day} \\ \hline
    \end{tabular}
  \end{center}
  \caption{Configuration of the parameters for the simulation of the system \ref{system_schneider}}
  \label{table_schneider}
\end{table}

\begin{figure}[ht]
  \begin{center}
    \subfloat[Plant in the middle]{
      \includegraphics[width = 0.6\textwidth]{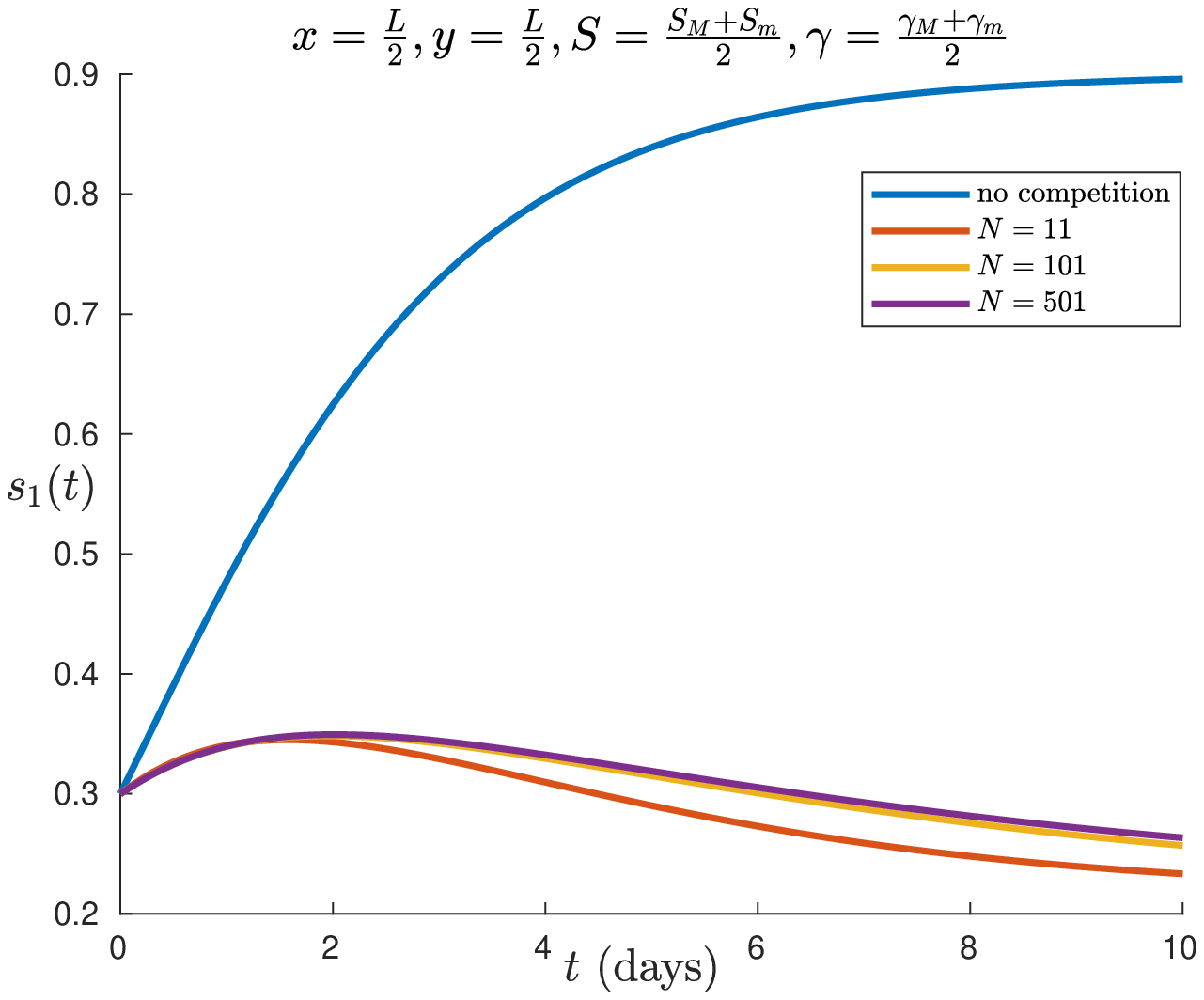}}
    \subfloat[Plant at the boundary]{
      \includegraphics[width = 0.6\textwidth]{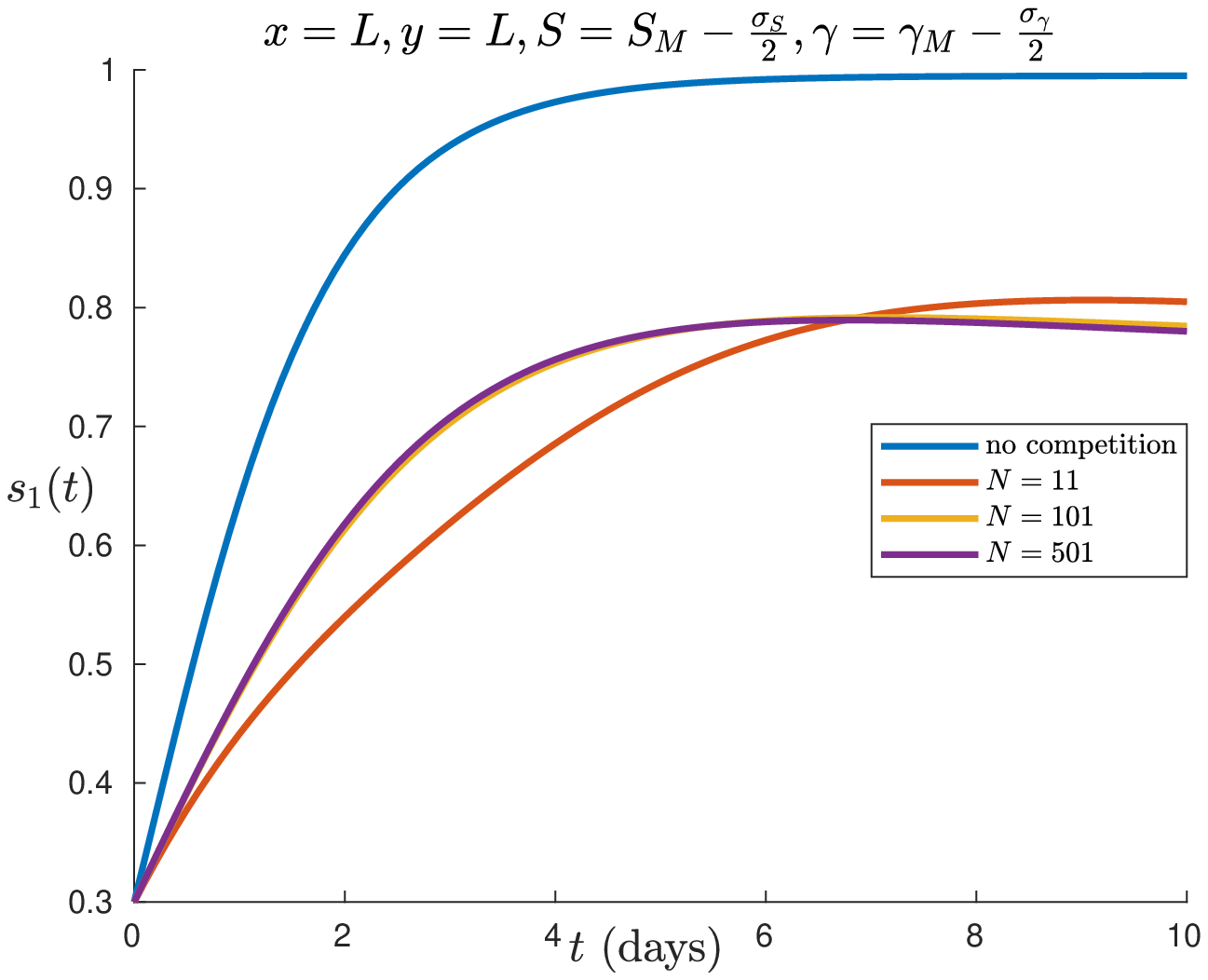}}
  \end{center}
  \caption{Comparison of plant growths for different characteristics and for different situations of competition. The plant in figure (a) is in the middle of the domain $[0;L]^2$, whereas the plant in figure (b) is located at the upper right corner. The plant in figure (a) has a slower growth rate $\gamma$ than the plant (b), and also a smaller equilibrium size $S$ than plant (b). The cases where the plant with $0,10,100,$ and 500 other plants are represented on the same graph.}
  \label{simu_schneider1}
\end{figure}

A visualization of the impact of competition on plant growth is presented in figure \ref{simu_schneider1}. Depending on its position and on its intrisic parameters $S$ and $\gamma$, the response of a plant to competition with the rest of the population can varie significantly. In the middle of the domain $[0;L]^2$, a plant is more subject to competition than a plant at the boundary, since it is surrounded by more competitors. The evolution of the size over the time depends also on the number $N$ of individuals. We can notice that the responses are quite different from $N = 11$ to $N = 101$, but there are very little changes from $N = 101$ to $N = 501$. This convergence constitutes a first visualization of the MFL distribution : as $N$ increases, the finite sample of competitors behaves more and more as a deterministic continuum. The next section gives a formal proof of this statement.

We can consider the change of variable $\displaystyle r = \log\left(\frac{s}{s_m}\right)$, so that the state variable $r$ lies in the vector space $\X = \R$, and $d_\X = 1$. This change of variable is also applied to the initial distribution $\mu_0$, so that the marginal initial distribution of the state is for now on related to $r$ variable $\mu_0^r = \delta_{r^0} = \delta_{\log(s^0/s_m)}$. The parameter space $\Theta$ can be chosen as $\Theta= [0;L]^2\times [s_m;S_M]\times [\gamma_m;\gamma_M]$. The reference vector to define a norm over $\Z$ can be chosen as $z^* = (1,L,L,s_m,\gamma_m)$.
\begin{equation}
  \forall z = (r,x,y,S,\gamma)\in \Z,~|z| = |r| + \frac{|x| + |y|}{L} + \frac{|S|}{s_m} + \frac{|\gamma|}{\gamma_m}
\end{equation}
The interaction function $g$ has the expression of function $g_r$ defined in equation (\ref{definition_gr}). Over $\Z$, all four assumptions are satisfied by function $g$. Possible choices of constants $K_1,K_2,K_3,K_4$ are given below.
\begin{equation}
  \begin{aligned}
    &K_1 = \gamma_M\max(1,R_M) & K_2 = \gamma_M\max\left(1,\frac{1}{4\sigma_r}\right)\\
    & K_3 = \gamma_M\max\left(1,\frac{1}{2\sigma_r}\right)& K_4 = \gamma_M\max\left(2+R_M,1+\frac{2L^2}{\sigma_x^2}\right)
  \end{aligned}
\end{equation}

\section{Derivation of the mean-field limit}

This section follows similar steps as in \cite{Golse2013} to establish the MFL distribution associated to system (\ref{system_micro}). We start by proving that system (\ref{system_micro}) implies a transport equation verified by the empirical measure of the population. From this transport equation, we derive the expression of the MFL transport equation monitoring the dynamics of the MFL distribution. Finally, the connection between the two transport equations is given by Dobrushin stability, which implies also chaos propagation.

\subsection{Properties of the population empirical measure}
\label{sub_empirical_measure}

The system (\ref{system_micro}) has an unique global solution if the interaction function $g$ satisfies assumptions (A1) and (A2). Let $Z_N^0 = (z_i^0)_{1\leq i\leq N} = ((X_i^0,\theta_i))_{1\leq i\leq N}$
be an initial configuration of the system (\ref{system_micro}). We introduce $t\in \R_+\mapsto Z_N(t,Z_N^0) = (z_i(t,Z_N^0))_{1\leq i\leq N} = ((X_i(t,Z_N^0),\theta_i))_{1\leq i\leq N} \in \Z^N$ the global solution of the system. The empirical measure of the population is defined as the map
\begin{equation}
  t\in \R_+\mapsto \mu[t,Z_N^0] = \frac{1}{N}\sum_{i = 1}^N \delta_{z_i(t,Z_N^0)}
  \label{empirical_measure}
\end{equation}
In the above equation, we use the notation $\forall z\in \Z, \delta_z$ is the Dirac distribution centered at $z$, i.e. the distribution of the random variable which is almost surely constant equal to $z$. The empirical measure of the population is a dynamical probability distribution. Sampling this distribution at a fixed time $t$ corresponds to choose an individual uniformly over the population (with probability $\displaystyle \frac{1}{N}$). Interestingly, the empirical measure describes exhaustively the dynamics of the whole population, while remaining in a space $\probaspace(\Z)$, which does not depend of the population size $N$. However, there is a loss of information from vector $Z_N(t,Z_N^0)$, where all individuals are labelled by indices in $\llbracket 1;N\rrbracket$, to the measure $\mu[t,Z_N^0]$ where all individuals are not distinguishable. In other words, a visualization of vector $Z_N^0$ in the phase space $\Z$ would be a cloud of points with all different colors, whereas a visualization of $\mu[t,Z_N^0]$ would be the same cloud of points with a single color. This indistinction of the individuals is a first step towards the mean-field limit, where individuals are punctual parts of a continuum.

Let us characterize the dynamics of $\mu[t,Z_N^0]$ using the system (\ref{system_micro}). We observe the dynamics of a probability measure through its action on test functions, which in our case is the functional space $\C^1_0(\R_+\times \Z\rightarrow \R)$.
\begin{equation}
  \begin{aligned}
    &\C^1_0(\R_+\times \Z\rightarrow \R) = \left\{\varphi :\R_+\times \Z \rightarrow \R \text{ continuously differentiable}|\right.\\
    &\left.\lim_{|z|\rightarrow +\infty} |\varphi(t,z)|+\left|\frac{\partial\varphi}{\partial z}(t,z)\right| + \left|\frac{\partial\varphi}{\partial t}(t,z)\right| = 0\right\}
  \end{aligned}
\end{equation}
In particular, the test functions considered in this article are bounded over their domain, and have bounded derivatives. We call action of $\mu[t,Z_N^0]$ on a test function $\varphi \in \C^1_0(\R_+\times \Z\rightarrow \R)$ the dual pairing of $\mu[t,Z_N^0]$ and $\varphi$ or the expectaction of random variable $\varphi(t,z)$ where $z$ is a random variable of distribution $\mu[t,Z_n^0]$.
\begin{equation}
  \expect_{\mu[t,Z_N^0]}(\varphi(t,z)) = \int_\Z \varphi(t,z)\mu[t,Z_N^0](\intd z) = \frac{1}{N}\sum_{i=1}^N\varphi(t,z_i(t,Z_N^0))
\end{equation}
The time evolution of $t\in \R_+\mapsto \mu[t,Z_N^0]$ can be studied by considering the differential equation satisfied by $t\in \R_+\mapsto \displaystyle \int_\Z\varphi(t,z)\mu[t,Z_N^0](\intd z)$ for any test function $\varphi$. So let us express the derivative $\displaystyle \frac{\intd}{\intd t} \int_\Z \varphi(t,z)\mu[t,Z_N^0](\intd z)$ as an action of $\mu[t,Z_N^0]$ on some function depending on $\varphi$ and on the interaction function $g$.
\begin{equation*}
    \begin{aligned}
      &\frac{\intd }{\intd t} \int_\Z \varphi(t,z)\mu[t,Z_N^0](\intd z) = \frac{1}{N}\sum_{i=1}^N\frac{\partial \varphi}{\partial t}(t,z_i(t,Z_N^0)) + \frac{\partial \varphi}{\partial X}(t,z_i(t,Z_N^0))^\T\frac{\intd X_i}{\intd t}(t,Z_N^0)\\
      & = \int_\Z\frac{\partial \varphi}{\partial t}(t,z)\mu[t,Z_N^0](\intd z) +  \frac{1}{N(N-1)}\sum_{i=1}^N\sum_{j\neq i}\frac{\partial \varphi}{\partial X}(t,z_i(t,Z_N^0))^\T g(z_i(t,Z_N^0),z_j(t,Z_N^0))\\
      &\frac{1}{N(N-1)}\sum_{i=1}^N\sum_{j\neq i}\frac{\partial \varphi}{\partial X}(t,z_i(t,Z_N^0))^\T g(z_i(t,Z_N^0),z_j(t,Z_N^0)) =\\
      &\int_\Z \frac{\partial \varphi}{\partial X}(t,z)^\T\left(\frac{N}{N-1}\int_\Z g(z,z')\mu[t,Z_N^0](\intd z')-\frac{1}{N-1}g(z,z)\right)\mu[t,Z_N^0](\intd z)
    \end{aligned}
  \end{equation*}
\begin{equation}
  \begin{aligned}
    &\frac{\intd }{\intd t} \int_\Z \varphi(t,z)\mu[t,Z_N^0](\intd z) = \\
    &\int_\Z\left(\frac{\partial \varphi}{\partial t}(t,z) +  \frac{\partial \varphi}{\partial X}(t,z)^\T\left(\frac{N}{N-1}\int_\Z g(z,z')\mu[t,Z_N^0](\intd z')-\frac{1}{N-1}g(z,z)\right)\right)\mu[t,Z_N^0](\intd z)
  \end{aligned}
  \label{weak_transport_micro}
\end{equation}
In the above equation, we can interpret the term
\begin{equation}
  \forall z\in \Z,~\G_N(\mu[t,Z_N^0],z) = \frac{N}{N-1}\int_\Z g(z,z')\mu[t,Z_N^0](\intd z')-\frac{1}{N-1}g(z,z)
  \label{velocity_micro}
\end{equation}
as the velocity field associated to the system (\ref{system_micro}), i.e. the one assigning to each individual its velocity according to the current state of the whole population\footnote{Similar developments can be found in \cite{Carrillo2010}}. $\G_N$ is said to be a \textit{non-local} velocity field, because it depends on the probability measure describing the state of the population, $\mu[t,Z_N^0]$ in this case. The velocity field can be associated to a conservative transport equation, having a formulation quite similar, in its principle at least, to Vlasov equations, where the velocity field depends on the unknown density (see \cite{Golse2003}, section 1.1.1).
\begin{equation}
  \begin{aligned}
    &\frac{\partial f}{\partial t}(t,z) + \Div_X\left(f(t,z)\left(\frac{N}{N-1}\int_\Z g(z,z')f(t,z')\lambda^{\otimes d_z}(\intd z') - \frac{g(z,z)}{N-1}\right) \right) = 0\\
    &\text{or } \frac{\partial f}{\partial t}(t,z) + \Div_X\left(f(t,z)\G_N\left(f(t,\cdot)\lambda^{\otimes d_z},z\right)\right) = 0
  \end{aligned}
  \label{transport_micro}
\end{equation}
where $\Div_X$ is the divergence operator with respect to state variable $X$, i.e. for any continuously differentiable map $F:\X\rightarrow \X$, $\Div_X F(X) = \displaystyle \sum_{i = 1}^{d_X}\frac{\partial F_i(X)}{\partial X_i}$, and $f(t,\cdot)\lambda^{\otimes d_Z}$ is the probability measure of density $f(t,\cdot):\Z\rightarrow \R_+$. The equation (\ref{weak_transport_micro}) is a weak formulation of equation (\ref{transport_micro}), which is formally defined in definition \ref{measure_solution_def}. As the weak formulation deals with trajectories taking values in the space of probability measures, we need to introduce the Wasserstein distance to quantify the regularity of these trajectories.

\begin{definition}
  Let $\mu_1,\mu_2\in \probaspace_1(\Z)$. Let $\Pi(\mu_1,\mu_2)$ the set of couplings of $\mu_1$ and $\mu_2$, i.e. the set of probability distributions having its first and second marginals equal to $\mu_1$ and $\mu_2$ respectively.
  \begin{equation*}
    \Pi(\mu_1,\mu_2) = \left\{\pi\in \probaspace_1(\Z^2)\left|\mu_1 = \int_\Z\pi(.,\intd z_2),\mu_2 = \int_\Z\pi(\intd z_1,.)\right.\right\}
  \end{equation*}
  The Wasserstein distance of first order between $\mu_1$ and $\mu_2$ is defined by
  \begin{equation}
    W_1(\mu_1,\mu_2) = \inf_{\pi\in \Pi(\mu_1,\mu_2)}\int_{\Z^2}|z_1-z_2|\pi(\intd z_1,\intd z_2)
  \end{equation}
  or, equivalently, the Wasserstein distance of first order has a dual representation (\cite{Kantorovich1958})
  \begin{equation}
    \begin{aligned}
      W_1(\mu_1,\mu_2) = \sup_{\varphi \in \C_L(\Z),\mathrm{Lip}(\varphi)\leq 1}\left|\int_\Z \varphi(z)\mu_1(\intd z)-\int_\Z\varphi(z)\mu_2(\intd z)\right|
    \end{aligned}
  \end{equation}
   with $\C_L(\Z)$ being the space of Lipschitz-continuous functions over $\Z$, taking values in $\R$, and $\mathrm{Lip}(\varphi)$ being the Lipschitz constant of $\varphi\in \C_L(\Z)$.
  \label{def_wasserstein}
\end{definition}

\begin{definition}
  \label{measure_solution_def}
  Let $\G : (\mu,z)\in\probaspace_1(\Z)\times \Z\rightarrow \X$ a non-local velocity field and $\mu_0\in \probaspace_1(\Z)$ a probability measure having first order moment, i.e. $\displaystyle \int_\Z |z|\mu_0(\intd z)<+\infty$. We say that the trajectory $t\in \R_+\mapsto \mu[t]\in \probaspace_1(\Z)$ is a measure solution of the transport equation of velocity field $\G$ and of initial condition $\mu_0$ if
  \begin{enumerate}
  \item the trajectory $t\in \R_+\mapsto \mu[t]\in \probaspace_1(\Z)$ is continuous for the metric $W_1$.
  \item for all test function $\varphi$, for all time $t\in \R_+$
    \begin{equation*}
      \begin{aligned}
        &\int_\Z \varphi(t,z)\mu[t](\intd z) - \int_\Z \varphi(0,z)\mu_0(\intd z) = \\
        &\int_0^t\int_\Z \left(\frac{\partial \varphi}{\partial t}(\tau,z) + \frac{\partial \varphi}{\partial X}(\tau,z)^\T\G(\mu[\tau],z)\right)\mu[\tau](\intd z)\intd \tau
      \end{aligned}
    \end{equation*}
    
  \end{enumerate}
\end{definition}
\begin{proposition}
Let $g:\Z^2\rightarrow \X$ satisfying assumptions (A1) and (A2) and $Z_N^0 = (z_i^0)_{1\leq i\leq N} = ((X_i^0,\theta_i))_{1\leq i\leq N}\in \Z^N$. Then the empirical measure $t\in \R_+\mapsto \mu[t,Z_N^0]$, defined in equation (\ref{empirical_measure}) is a measure solution to the transport equation of velocity field $\G_N$, defined in equation (\ref{velocity_micro}), and of initial condition $\mu[0,Z_N^0] = \displaystyle \frac{1}{N}\sum_{i=1}^N\delta_{z_i^0}$.
\end{proposition}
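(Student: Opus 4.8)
\emph{Proof strategy.} The plan is to verify directly the two defining properties of Definition \ref{measure_solution_def} for the trajectory $t\mapsto\mu[t,Z_N^0]$, exploiting that $\mu[t,Z_N^0]$ is a finite convex combination of Dirac masses. First I would invoke the global existence and uniqueness of the solution of (\ref{system_micro}) under (A1)--(A2) recalled just above, noting that each $t\mapsto X_i(t,Z_N^0)$ is of class $\C^1$ while $\theta_i$ is constant; hence each $t\mapsto z_i(t,Z_N^0)$ is continuous and, on any $[0,t]$, stays in a fixed compact subset of $\Z$. Because $\mu[t,Z_N^0]$ is atomic, every integral occurring below is a finite sum, so no integrability or interchange-of-limit issue arises, and the decay conditions in the definition of $\C^1_0(\R_+\times\Z\rightarrow\R)$ are not needed here (they matter only for the genuine mean-field limit).

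For the $W_1$-continuity, I would use the ``diagonal'' coupling $\pi=\frac1N\sum_{i=1}^N\delta_{(z_i(s,Z_N^0),\,z_i(t,Z_N^0))}\in\Pi(\mu[s,Z_N^0],\mu[t,Z_N^0])$, which by Definition \ref{def_wasserstein} gives
\[
  W_1(\mu[s,Z_N^0],\mu[t,Z_N^0])\;\leq\;\frac1N\sum_{i=1}^N|z_i(s,Z_N^0)-z_i(t,Z_N^0)| ,
\]
and the right-hand side tends to $0$ as $s\to t$ by continuity of the $z_i$. Finiteness of the first moment of $\mu[t,Z_N^0]$ is immediate from its finite support, so $\mu[t,Z_N^0]\in\probaspace_1(\Z)$ for every $t$ and the trajectory is continuous for $W_1$.

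For the weak formulation, I would fix a test function $\varphi\in\C^1_0(\R_+\times\Z\rightarrow\R)$ and set $h(t)=\int_\Z\varphi(t,z)\mu[t,Z_N^0](\intd z)=\frac1N\sum_{i=1}^N\varphi(t,z_i(t,Z_N^0))$. Since $\varphi$ is $\C^1$ and each $z_i$ is $\C^1$ in its $X$-component and constant in its $\theta$-component, $h$ is $\C^1$, and the chain rule together with the differential equation in (\ref{system_micro}) reproduces exactly the manipulation carried out in (\ref{weak_transport_micro}): the constrained double sum $\frac{1}{N(N-1)}\sum_{i}\sum_{j\neq i}\frac{\partial\varphi}{\partial X}(t,z_i)^\T g(z_i,z_j)$ is rewritten as an integral against $\mu[t,Z_N^0]^{\otimes2}$ at the cost of the diagonal correction $-\frac{1}{N-1}g(z,z)$, which yields
\[
  h'(t)=\int_\Z\left(\frac{\partial\varphi}{\partial t}(t,z)+\frac{\partial\varphi}{\partial X}(t,z)^\T\G_N(\mu[t,Z_N^0],z)\right)\mu[t,Z_N^0](\intd z),
\]
with $\G_N$ the velocity field of (\ref{velocity_micro}). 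The integrand is continuous in $t$ (the $z_i$, $\varphi$ and its derivatives are continuous, and $t\mapsto\G_N(\mu[t,Z_N^0],z_i(t,Z_N^0))=\frac{\intd X_i}{\intd t}(t,Z_N^0)$ is continuous as the derivative of a $\C^1$ map), so integrating $h'$ over $[0,t]$ and using $h(0)=\frac1N\sum_i\varphi(0,z_i^0)=\int_\Z\varphi(0,z)\mu[0,Z_N^0](\intd z)$ gives the integral identity of Definition \ref{measure_solution_def}.

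I do not expect a genuine obstacle here. The only two points requiring a little care are the bookkeeping of the diagonal term $-\frac{1}{N-1}g(z,z)$ when passing from the constrained double sum $\sum_{j\neq i}$ to the double integral against $\mu[t,Z_N^0]$, and the justification of the fundamental theorem of calculus, i.e. the continuity in time of the integrand in the weak formulation; both become immediate once one uses that the empirical measure is atomic and that the solution of (\ref{system_micro}) is $\C^1$.
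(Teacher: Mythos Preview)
Your proposal is correct and follows essentially the same route as the paper: the paper's proof (appendix \ref{proof_prop3}) also reduces the weak-formulation requirement to the computation already carried out in (\ref{weak_transport_micro}), and bounds $W_1(\mu[t_1,Z_N^0],\mu[t_2,Z_N^0])$ by $\frac1N\sum_i|z_i(t_1,Z_N^0)-z_i(t_2,Z_N^0)|$, concluding by continuity of the trajectories. The only cosmetic difference is that you obtain this bound via the diagonal coupling (primal side of Definition~\ref{def_wasserstein}), whereas the paper uses the Kantorovich dual with $1$-Lipschitz test functions; both yield the identical inequality.
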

This proposition summarizes the equation (\ref{weak_transport_micro}). It is also necessary to check the continuity of the trajectory $t\in \R_+\mapsto \mu[t,Z_N^0]\in \probaspace_1(\Z)$ is continuous for the metric $W_1$. This continuity is directly given by the continuity of the solution of the system (\ref{system_micro}) (see appendix section \ref{proof_prop3}).

The transport equation (\ref{weak_transport_micro}) satisfied by the empirical measure leads to the transport equation describing the dynamics of the population with an infinite number of individuals by taking the limit $N \rightarrow +\infty$. The resulting equation, obtained informally, is referred as the MFL transport equation, and its eventual solution is the MFL distribution. Subsection \ref{sub_macro} solves the MFL transport equation and proves the existence and uniqueness of the MFL distribution for system (\ref{system_micro}). Subsection \ref{sub_dobrushin} studies different aspects of the convergence towards the MFL distribution.

\subsection{Study of the mean-field equation}
\label{sub_macro}

This subsection gives a characterization of the MFL distribution as the unique solution of a non-local transport equation obtained as the limite case of transport equation (\ref{weak_transport_micro}). Let us assume informally that, for some metric over the space of probability measures (namely the Wasserstein distance, see subsection \ref{sub_dobrushin}), the empirical measure $\mu[t,Z_N^0]$ has a limit $\mu[t]$ when $N\rightarrow +\infty$. Then it is reasonable to think that, for some other metric, the velocity field $\G_N(\mu[t,Z_N^0],\cdot)$ converges towards a velocity field depending on $g$ and $\mu[t]$. The only expression this velocity field can reasonably have, when $N\rightarrow +\infty$ in equation (\ref{velocity_micro}), is
\begin{equation}
  \forall z\in \Z,~\G(\mu[t],z) = \int_\Z g(z,z')\mu[t](\intd z')
  \label{velocity_g}
\end{equation}
So if the MFL distribution exists, it has to be a measure solution of the transport equation of velocity field $\G$ and of initial condition $\mu_0$, as it is reminded in subsection \ref{sub_dobrushin} that $\mu[0,Z_N^0]$ converges towards $\mu_0$ for the Wasserstein distance. The MFL distribution is therefore an eventual trajectory $t\in \R_+\mapsto \mu[t]\in \probaspace_1(\Z)$, continuous for the metric $W_1$, such that for all test function $\varphi$ and for all time $t\in \R_+$
\begin{equation}
  \begin{aligned}
    &\int_\Z \varphi(t,z)\mu[t](\intd z) - \int_\Z \varphi(0,z)\mu_0(\intd z) = \\
    &\int_0^t\int_\Z \left(\frac{\partial \varphi}{\partial t}(\tau,z) + \frac{\partial \varphi}{\partial X}(\tau,z)^\T\int_\Z g(z,z')\mu[t](\intd z')\right)\mu[\tau](\intd z)\intd \tau
  \end{aligned}
  \label{weak_transport_macro}
\end{equation}
Drawing largely on \cite{Golse2013}, we would like to use the characteristic flow method to prove the existence and uniqueness of the solution to the equation (\ref{weak_transport_macro}). The characteristic flow method is a classical idea to study a transport equation : the transport PDE describes the dynamics, while the characteristic flow equation describes the motion of a single particle, subject to the same velocity field. Let $t\in \R_+\mapsto X_\infty(t,X,\theta)\in \X$ be the trajectory of a particle immersed in velocity field $\G$, of initial configuration $(X,\theta)\in \Z$.
\begin{equation}
  \left\{\begin{array}{l}
  X_\infty(0,X,\theta) = X\\
  \displaystyle \forall t\in \R_+,~\frac{\partial X_\infty}{\partial t}(t,X,\theta) = \G(\mu[t],X_\infty(t,X,\theta),\theta)
  \end{array}\right.
  \label{equ_X_macro1}
\end{equation}
As $\mu[t]$ is unknown, we cannot evaluate the derivative $\displaystyle \frac{\intd X_\infty}{\intd t}$, except at $t = 0$, when $\mu[0] = \mu_0$. However, there is a strong connection between the MFL distribution $\mu[t]$ and the flow $X_\infty(t,\cdot)$, as $\mu[t]$ describes the state of a population which is composed of infinite number of particles $(X_\infty(t,X',\theta'),\theta')$, whose initial configuration is given by $\mu_0$. In other words, we can look at the interaction with the rest of the population not as an average over all the possible states at time $t$, as it is the case in equation (\ref{equ_X_macro1}), but as an average over all initial configurations.
\begin{equation}
  \left\{\begin{array}{l}
  X_\infty(0,X,\theta) = X\\
  \displaystyle \forall t\in \R_+,~\frac{\partial X_\infty}{\partial t}(t,X,\theta) = \int_\Z g(X_\infty(t,X,\theta),\theta,X_\infty(t,X',\theta'),\theta')\mu_0(\intd X',\intd \theta')
  \end{array}\right.
\end{equation}
There is only a single unknown in the above functional equation, which is the flow $X_\infty:\R_+\times \Z\rightarrow \X$. The next theorem shows that this characteristic flow is well defined.

\begin{theorem}
  \label{thm_flow_X}
  Let $\mu_0\in \probaspace_2(\Z)$ a probability measure having second order moment, i.e. $\displaystyle \int_\Z |z|^2\mu_0(\intd z) < +\infty$, and $g:\Z^2\rightarrow \X$ satisfying assumptions (A1) and (A2). Then there exists an unique flow such that
  \begin{enumerate}
  \item $\forall t\in \R_+,~\forall (X,\theta)\in \Z,~\displaystyle \int_\Z |g(X_\infty(t,X,\theta),\theta,X_\infty(t,X',\theta'),\theta')|\mu_0(\intd X',\intd \theta') < +\infty$
  \item $\forall (X,\theta)\in\Z,~t\in\R_+\mapsto X_\infty(t,X,\theta)$ is continuously differentiable.
  \item $\forall (X,\theta)\in \Z$,
    \begin{equation}
      \begin{aligned}
        &\left\{\begin{array}{l}
        X_\infty(0,X,\theta) = X\\
        \displaystyle \forall t\in \R_+,~\frac{\partial X_\infty}{\partial t}(t,X,\theta) = \int_\Z g(X_\infty(t,X,\theta),\theta,X_\infty(t,X',\theta'),\theta')\mu_0(\intd X',\intd \theta')
        \end{array}\right.
      \end{aligned}
      \label{equ_flow_X}
    \end{equation}
  \end{enumerate}
\end{theorem}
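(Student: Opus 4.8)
The plan is to follow the characteristic-flow strategy of \cite{Golse2013}: recast (\ref{equ_flow_X}) as a fixed-point problem, solve it by a contraction argument, and let the time horizon go to infinity. The key observation is that a flow $X_\infty$ solving (\ref{equ_flow_X}) is the same object as a pair $(X_\infty,\mu[\cdot])$ in which $\mu[t]$ is the image of $\mu_0$ under $(X,\theta)\mapsto(X_\infty(t,X,\theta),\theta)$: with this $\mu[\cdot]$ the integral in (\ref{equ_flow_X}) is exactly $\G(\mu[t],X_\infty(t,X,\theta),\theta)$ for $\G$ as in (\ref{velocity_g}), so \emph{once $\mu[\cdot]$ is known}, (\ref{equ_flow_X}) is merely a non-autonomous ODE. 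Conversely, for any $W_1$-continuous curve $\nu[\cdot]$ with locally bounded first moment, assumption (A1) makes $\xi\mapsto\G(\nu[t],\xi,\theta)$ grow at most linearly, and assumption (A2) with coinciding third arguments makes it globally Lipschitz in $\xi$ with constant $K_2\big(1+2\int_\Z|X'|\nu[t](\intd X',\intd\theta')\big)$; Cauchy--Lipschitz then yields a unique global $C^1$ flow for it, and point~1 of the theorem holds automatically since, by (A1), the integrand in (\ref{equ_flow_X}) is dominated by $K_1\big(1+|X_\infty(t,X,\theta)|+\int_\Z|X'|\mu[t](\intd X',\intd\theta')\big)$.

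I would carry out the fixed point on the flow itself, on a fixed interval $[0,T]$, in the space $\mathcal{E}$ of continuous curves $t\mapsto Y(t,\cdot)\in L^2(\mu_0;\X)$ with $Y(0,\cdot)=\mathrm{id}$ and a quadratic growth bound $\int_\Z|Y(t,X,\theta)|^2\mu_0(\intd X,\intd\theta)\le R(t)$, equipped with the complete metric $d(Y,\tilde Y)=\sup_{t\in[0,T]}\big(\int_\Z|Y(t,X',\theta')-\tilde Y(t,X',\theta')|^2\mu_0(\intd X',\intd\theta')\big)^{1/2}$; the map to iterate is $\Lambda(Y)(t,X,\theta)=X+\int_0^t\int_\Z g\big(Y(s,X,\theta),\theta,Y(s,X',\theta'),\theta'\big)\mu_0(\intd X',\intd\theta')\,\intd s$, whose fixed points are the solutions of (\ref{equ_flow_X}) restricted to $\mathrm{supp}\,\mu_0$. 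Standard Grönwall estimates based on (A1) give $|Y(t,X,\theta)|\le(|X|+C(t))e^{K_1t}$ with $C$ locally bounded; integrating this bound, resp.\ its square, against $\mu_0$ (here $\mu_0\in\probaspace_2(\Z)$ is used) shows that first and second $X$-moments stay bounded on $[0,T]$ by constants depending only on $T$, $K_1$ and the moments of $\mu_0$, which is what makes $\mathcal{E}$ stable under $\Lambda$ for a suitable choice of $R$.

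For the contraction, take $Y,\tilde Y\in\mathcal{E}$, subtract the defining integral equations, and apply (A2) with first two arguments $Y(s,X,\theta),\theta$ versus $\tilde Y(s,X,\theta),\theta$ and last two $Y(s,X',\theta'),\theta'$ versus $\tilde Y(s,X',\theta'),\theta'$; since the $\theta$-labels are unchanged, this bounds the difference of the two inner integrands at time $s$ by $K_2\int_\Z\big(1+|Y(s,X',\theta')|+|\tilde Y(s,X',\theta')|\big)\big(|Y(s,X,\theta)-\tilde Y(s,X,\theta)|+|Y(s,X',\theta')-\tilde Y(s,X',\theta')|\big)\mu_0(\intd X',\intd\theta')$. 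The growth weight is absorbed by the Cauchy--Schwarz inequality against the uniform second-moment bound; squaring, integrating against $\mu_0(\intd X,\intd\theta)$ and using Cauchy--Schwarz in $s$ then gives $d(\Lambda(Y),\Lambda(\tilde Y))^2\le C\,T^2\,d(Y,\tilde Y)^2$ for a constant $C$ built from $K_2$ and the moment bounds, so $\Lambda$ is a strict contraction for $T$ small, or on any prescribed horizon after replacing $d$ by its $e^{-\lambda t}$-weighted version with $\lambda$ large. Banach's theorem yields a unique fixed point $Y$, hence a unique curve $\mu^{\star}[\cdot]$; one then \emph{defines} $X_\infty(t,X,\theta)$ for \emph{every} $(X,\theta)\in\Z$ as the unique Cauchy--Lipschitz solution of $\partial_tX_\infty=\G(\mu^{\star}[t],X_\infty,\theta)$, $X_\infty(0,X,\theta)=X$. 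By uniqueness it agrees $\mu_0$-almost everywhere with $Y$, so $\G(\mu^{\star}[t],X_\infty(t,X,\theta),\theta)$ really is the $\mu_0$-average appearing in (\ref{equ_flow_X}), and $t\mapsto X_\infty(t,X,\theta)$ is $C^1$ because its derivative is the continuous map $t\mapsto\G(\mu^{\star}[t],X_\infty(t,X,\theta),\theta)$ (continuity in $t$ coming from $W_1$-continuity of $\mu^{\star}[\cdot]$ together with the uniform moment bounds); letting $T$ vary and using uniqueness produces the flow on all of $\R_+$.

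The step I expect to be the main obstacle is the contraction estimate in the presence of the unbounded, only weighted-Lipschitz interaction $g$: one cannot bound $\big|\int_\Z g(X,\theta,\cdot)\,\intd\mu[s]-\int_\Z g(X,\theta,\cdot)\,\intd\nu[s]\big|$ by a fixed multiple of $W_1(\mu[s],\nu[s])$ through the Kantorovich duality of Definition~\ref{def_wasserstein}, since $g$ is neither bounded nor uniformly Lipschitz. The remedy -- keeping the candidate flow (hence the two competing measures) explicitly tied to the fixed reference $\mu_0$, so that all differences are estimated through the canonical $\mu_0$-coupling, and then absorbing the growth weight $1+|X'|$ by Cauchy--Schwarz against the propagated second moment -- is exactly what makes assumptions (A1)--(A2) and the hypothesis $\mu_0\in\probaspace_2(\Z)$ sufficient; the remaining ingredients (Cauchy--Lipschitz, Grönwall, and the elementary $W_1$-coupling bounds) are routine.
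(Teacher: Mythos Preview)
Your argument is correct, but it follows a genuinely different route from the paper's. The paper carries out the fixed-point directly in the weighted uniform space $\Y=\{f\in\C^0(\Z\to\X):\sup_{z}\frac{|f(z)|}{1+|z|}<\infty\}$ (Lemma~1), with the Picard map $\Phi_\alpha$ acting on $\C^0([-\alpha;\alpha]\to\Y)$; the contraction estimate there produces the term $\int_\Z|z'|^2\mu_0(\intd z')$ explicitly, which is where $\mu_0\in\probaspace_2(\Z)$ enters. Global existence is then obtained by a maximal-interval argument rather than by a weighted-in-time norm.

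The practical difference is this: the paper's fixed point lives in $\C^0(\Z\to\X)$ from the outset, so the flow is defined for \emph{every} $(X,\theta)\in\Z$ in one stroke, and the weighted sup-norm bound $\|X_\infty(t,\cdot)\|_\Y\le M(t)$ is available immediately for the later Dobrushin estimates. Your $L^2(\mu_0)$ fixed point only determines the flow on $\supp\,\mu_0$ (up to null sets), and you recover the full flow by a second step, solving the now-linear ODE $\partial_t X_\infty=\G(\mu^\star[t],X_\infty,\theta)$ pointwise. This two-stage structure is closer to the McKean--Vlasov viewpoint (first find the measure, then drive particles by it) and makes the role of $\probaspace_2$ transparent via Cauchy--Schwarz, but it does not directly give the $\|\cdot\|_\Y$ control that the paper uses downstream; you would need to derive that separately from the Grönwall bound you already state.
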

The above functional equation can be seen as a continuous version of system (\ref{system_micro}). Formally, this equation is a differential equation with an initial condition being a probability measure and having trajectories in vector space $\X$. This theorem can therefore be proved by following exactly the same steps as for Cauchy-Lipschitz theorem with traditional differential equations. A common proof of Cauchy-Lipschitz theorem is based on fixed point theorem within a functional Banach space. The functional space where the flow solution $X_\infty$ lies must be complete, as the fixed point theorem recquires the convergence of any Cauchy sequence. The next lemma introduces the functional space used in the proof of theorem 1.

\begin{lemma}(\cite{Golse2013})
  Let $\Y$ be the functional space defined by
  \begin{equation}
    \Y = \left\{f\in \C^0(\Z\rightarrow \X)\left|\sup_{z\in \Z}\frac{|f(z)|}{1+|z|} <+\infty\right.\right\}
  \end{equation}
  Then $\Y$ is a Banach space for the metric $f\in \Y\mapsto \|f\|_\Y = \displaystyle \sup_{z\in \Z}\frac{|f(z)|}{1+|z|}$.
\end{lemma}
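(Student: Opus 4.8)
The plan is to reproduce the standard argument that a space of functions equipped with a weighted supremum norm is complete, treating the weight $1+|z|$ as a harmless modification of the usual uniform norm and exploiting the completeness of the Euclidean space $\X$.

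First I would check that $\|\cdot\|_\Y$ is a genuine norm on the vector space $\Y$: positivity and the implication $\|f\|_\Y = 0 \Rightarrow f\equiv 0$ hold because $1+|z| > 0$ for every $z\in\Z$; homogeneity is immediate; and the triangle inequality follows from $|f(z)+h(z)|\leq |f(z)|+|h(z)|$ by dividing by $1+|z|$ and taking the supremum over $z$. The same manipulations show that $\Y$ is stable under addition and scalar multiplication, hence is a vector subspace of $\C^0(\Z\rightarrow\X)$.

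For completeness, let $(f_n)_{n\in\N}$ be a Cauchy sequence in $(\Y,\|\cdot\|_\Y)$. For each fixed $z\in\Z$, the estimate $|f_n(z)-f_m(z)|\leq (1+|z|)\,\|f_n-f_m\|_\Y$ shows that $(f_n(z))_n$ is Cauchy in $\X$, which is complete; I define $f(z)$ as its limit. To upgrade this pointwise convergence to convergence in $\|\cdot\|_\Y$: given $\varepsilon>0$, pick $N$ with $\|f_n-f_m\|_\Y\leq\varepsilon$ for $n,m\geq N$; then $|f_n(z)-f_m(z)|\leq\varepsilon(1+|z|)$ for all $z$, and letting $m\rightarrow+\infty$ yields $|f_n(z)-f(z)|\leq\varepsilon(1+|z|)$ for every $z$, i.e. $\|f_n-f\|_\Y\leq\varepsilon$ for $n\geq N$.

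It remains to verify $f\in\Y$. Finiteness of the weighted supremum is clear from $\|f\|_\Y\leq\|f-f_N\|_\Y+\|f_N\|_\Y<+\infty$. For continuity, I observe that on any ball $\{z\in\Z : |z|\leq R\}$ the bound $|f(z)-f_n(z)|\leq (1+R)\,\|f-f_n\|_\Y$ shows $f_n\rightarrow f$ uniformly on bounded sets; since each $f_n$ is continuous and continuity is a local property, $f$ is continuous on all of $\Z$. Hence $f\in\Y$ with $f_n\rightarrow f$ in $\Y$, which proves that $\Y$ is a Banach space. There is no genuine obstacle here; the only point requiring a moment's care is the continuity of the limit, for which one notes that $\|\cdot\|_\Y$-convergence forces uniform convergence on every bounded subset of $\Z$.
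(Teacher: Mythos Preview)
Your proof is correct and follows the standard argument for completeness of weighted supremum-norm spaces. Note that the paper does not supply its own proof of this lemma: it is stated with a citation to \cite{Golse2013} and used as a tool in the proof of Theorem~\ref{thm_flow_X}, so there is no in-paper argument to compare against. Your write-up is exactly the kind of verification one would expect for such a cited auxiliary result, and the one point you flag as requiring care --- that $\|\cdot\|_\Y$-convergence yields uniform convergence on bounded subsets, hence continuity of the limit --- is indeed the only nontrivial step.
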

The proof of the above theorem is given in appendix \ref{proof_thm1}, and it follows the same steps than the proof of Cauchy-Lipschitz theorem for ordinary differential equations: local existence and uniqueness, existence of a maximal solution, uniqueness of the maximal solution and finally definition over $\R_+$ of the maximal solution. The assumption (A1) on $g$ is important to ensure that the flow solution is defined over $\R_+$ and also the stability within the functional space $\Y$. Besides, the control of the Lipschitz factor in assumption (A2) can be relaxed, as long as the Lipschitz factor is compensated by the initial distribution. For instance, if the Lipschitz factor in (A2) is $K_2(1+|X|^n+|X'|^n)$ for some $n > 0$ instead of $K_2(1+|X|+|X'|)$, then similar reasoning can be carried out to obtain the existence and uniqueness of $X_\infty$, if $\mu_0$ is chosen in $\probaspace_{n+1}(\Z)$, i.e. such that $\displaystyle \int_\Z |z|^{n+1}\mu_0(\intd z)<+\infty$.

In the case of Schneider model, the flow solution $t\in\R_+\mapsto r_\infty(t,.)$ satisfies the following equation
\begin{equation}
  \begin{aligned}
    &\forall (r,x,y,S,\gamma)\in \Z = \R\times \Theta,\\
    &\left\{\begin{array}{l}
    r_\infty(0,r,\theta) = r\\
    \displaystyle \forall t\in \R_+,~\frac{\partial r_\infty}{\partial t}(t,r,\theta) = \int_\Z g_r(r_\infty(t,r,\theta),\theta,r_\infty(t,r',\theta'))\mu_0(\intd r',\intd \theta')
    \end{array}\right.\\
    &g_r(r_1,(\vec x_1,S_1,\gamma_1),r_2,(\vec x_2,S_2,\gamma_2)) = \gamma_1\left(\log\left(\frac{S_1}{s_m}\right)\left(1-C_r(r_1,r_2,|\vec x_1-\vec x_2|)\right)-r_1\right)\\
    &C_r(r_1,r_2,|\vec x_1-\vec x_2|) = \frac{r_2}{2R_M\displaystyle\left(1+\frac{|\vec{x}_1-\vec{x}_2|^2}{\sigma_x^2}\right)}\left(1+\tanh\left(\frac{r_2-r_1}{\sigma_r}\right)\right)
  \end{aligned}
\end{equation}
If $\mu_0$ is the distribution defined in equation (\ref{eq_mu0_schneider}), we have for all time $t\in \R_+$ and for all $(r,\theta)\in \Z$,
\begin{equation}
  \frac{\partial r_\infty}{\partial t}(t,r,\theta) = \int_\Theta g_r(r_\infty(t,r,\theta),\theta,r_\infty(t,r^0,\theta'),\theta')p_0^\theta(\theta')\lambda^{\otimes 4}(\intd \theta')
  \label{eq_flow_r}
\end{equation}
where $r^0 = \log\left(\frac{s^0}{s_m}\right)$. This relation holds because the marginal distribution of the initial state is a Dirac distribution centered at $r^0$.

The characteristic flow (\ref{equ_flow_X}) leads to the unique solution of the mean-field transport equation (\ref{weak_transport_macro}), which appears as the pushforward probability measure of the initial distribution $\mu_0$ by the map $X_\infty(t,.)$. This result is used qualitatively to derive equation (\ref{equ_flow_X}) from equation (\ref{equ_X_macro1}).
\begin{corollary}
  Let $\mu_0\in \probaspace_2(\Z)$, g satisfying assumptions (A1), (A2) and (A3), and $z^0 = (X^0,\theta)$ a random variable of distribution $\mu_0$. Then the unique measure-solution to the transport equation (\ref{weak_transport_macro}) is $t\in \R_+\mapsto \mu[t]\in \probaspace_2(\Z)$ where for all $t\in \R_+$, $\mu[t]$ is the probability distribution of $z^t = (X_\infty(t,z^0),\theta)$.
\end{corollary}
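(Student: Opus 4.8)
The plan is to take $\mu[t]$ to be, by construction, the push-forward measure $(\Phi_t)_\#\mu_0$, where $\Phi_t:z=(X,\theta)\in\Z\mapsto(X_\infty(t,X,\theta),\theta)\in\Z$ transports the state along the characteristic flow of Theorem \ref{thm_flow_X} and freezes the parameter; equivalently, $\mu[t]$ is the law of $z^t=(X_\infty(t,z^0),\theta)$. Two things then have to be checked: that $t\mapsto\mu[t]$ is \emph{a} measure solution of (\ref{weak_transport_macro}) in the sense of Definition \ref{measure_solution_def}, and that it is \emph{the} one.

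For the first point I would imitate, at the level of the push-forward, the three routine verifications one makes for an ODE flow. First, \emph{moments}: applying Gronwall's lemma to $t\mapsto\expect\,|X_\infty(t,z^0)|$ and then to $t\mapsto\expect\,|X_\infty(t,z^0)|^2$, using the at-most-linear growth (A1) inside the flow equation (\ref{equ_flow_X}) and $\mu_0\in\probaspace_2(\Z)$, gives $\mu[t]\in\probaspace_2(\Z)$ with first and second moments bounded on every compact time interval. Second, \emph{continuity for $W_1$}: the coupling induced by $z^0\mapsto(\Phi_s z^0,\Phi_t z^0)$ yields
\begin{equation*}
  W_1(\mu[s],\mu[t])\;\le\;\expect\,|X_\infty(s,z^0)-X_\infty(t,z^0)|,
\end{equation*}
and the right-hand side tends to $0$ as $s\to t$ by the $\C^1$-in-time regularity of the flow and dominated convergence, the dominating function being supplied by the moment bound and the linear growth in (\ref{equ_flow_X}). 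Third, the \emph{weak identity}: for $\mu_0$-almost every realization of $z^0$ the map $s\mapsto\varphi(s,X_\infty(s,z^0),\theta)$ is $\C^1$, so the fundamental theorem of calculus gives
\begin{equation*}
  \varphi(t,\Phi_t z^0)-\varphi(0,z^0)=\int_0^t\left(\frac{\partial\varphi}{\partial t}(s,\Phi_s z^0)+\frac{\partial\varphi}{\partial X}(s,\Phi_s z^0)^\T\,\frac{\partial X_\infty}{\partial s}(s,z^0)\right)\intd s;
\end{equation*}
taking expectations, interchanging $\expect$ with the $\intd s$-integral (Fubini, legitimate because $\varphi$ and its derivatives are bounded while $|\partial_s X_\infty(s,z^0)|$ has finite, locally bounded expectation by (A1) and the moment bound), and using that $\partial_s X_\infty(s,z^0)=\int_\Z g(X_\infty(s,z^0),\theta,X',\theta')\,\mu[s](\intd X',\intd\theta')=\G(\mu[s],\Phi_s z^0)$ — the middle equality being exactly the change of variables defining the push-forward — reproduces the defining identity of Definition \ref{measure_solution_def}.

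For uniqueness, let $\nu[\cdot]$ be another measure solution with $\nu[0]=\mu_0$. The key preliminary is a \emph{uniform}-in-time first-moment bound $\sup_{t\le T}\int_\Z|z|\,\nu[t](\intd z)<+\infty$, which is not contained in $\nu[t]\in\probaspace_1(\Z)$ and must be produced by testing (\ref{weak_transport_macro}) against smooth compactly supported truncations of $z\mapsto|z|$, bounding $|\G(\nu[t],z)|\le K_1(1+|X|+\int_\Z|z'|\,\nu[t](\intd z'))$ via (A1), closing a Gronwall inequality and then removing the truncation by monotone convergence. With this bound, (A2) shows that $z\mapsto\G(\nu[t],z)$ is globally Lipschitz in $X$ with a constant uniform on $[0,T]$ (and trivial in $\theta$), hence generates a flow $Y(t,\cdot)$; the standard method-of-characteristics argument for a transport equation with Lipschitz drift of at most linear growth then forces $\nu[t]=Y(t,\cdot)_\#\mu_0$. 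But then $(Y,\nu)$ solves the same closed functional equation (\ref{equ_flow_X}) as $(X_\infty,\mu)$, so the uniqueness half of Theorem \ref{thm_flow_X} gives $Y=X_\infty$ and hence $\nu=\mu$. Alternatively, one may compare $\mu$ and $\nu$ directly through a Gronwall estimate on $t\mapsto W_1(\mu[t],\nu[t])$ — the Dobrushin-type stability estimate of subsection \ref{sub_dobrushin}, applied to two solutions of the mean-field equation — which starts from $W_1(\mu_0,\mu_0)=0$.

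I expect the real difficulty to be bookkeeping rather than conceptual: every interchange of $\expect$ with a time integral and every use of the flow equation must be justified uniformly on compact time intervals, and in the uniqueness part one has to manufacture an a priori moment bound on an arbitrary measure solution out of a weak formulation whose admissible test functions are forced to decay at infinity — which is precisely what assumption (A1) is designed to allow, with (A2) supplying the Lipschitz control that drives the characteristic-flow uniqueness, and (A3) guaranteeing in addition that $z\mapsto\G(\mu,z)$ is $\C^1$ in $X$ with at most linearly growing derivative, so that the characteristic flow is a $\C^1$ flow, as will be needed in the sequel.
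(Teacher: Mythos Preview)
Your existence argument and your overall uniqueness strategy --- build the flow $Y$ for the frozen trajectory $\nu$, show $\nu[t]=Y(t,\cdot)_\#\mu_0$, then invoke the uniqueness in Theorem~\ref{thm_flow_X} --- both match the paper's. The gap is that you have misplaced where (A3) enters: it is needed precisely in the step you dismiss as ``the standard method-of-characteristics argument,'' not merely ``in the sequel.'' To deduce $\nu[t]=Y(t,\cdot)_\#\mu_0$ from the weak formulation one tests against $\varphi(s,z)=\varphi_0(Y(0,s,z))$, the solution of the backward linear transport equation $\partial_s\varphi+\partial_X\varphi^\T\G(\nu[s],z)=0$; this collapses the right-hand side of the weak identity to zero and yields $\int\varphi_0\,\intd\nu[t]=\int\varphi_0(Y(t,0,\cdot))\,\intd\mu_0$. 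But $\varphi$ is an admissible test function (i.e.\ lies in $\C^1_0$) only if the flow $Y$ is $\C^1$ in $X$, and a merely Lipschitz drift --- which is all (A2) provides --- yields only a Lipschitz flow. It is (A3) that upgrades $X\mapsto\G(\nu[s],X,\theta)$ to $\C^1$, hence $Y$ to a $\C^1$ flow (via the classical differentiability-of-flows lemma the paper records as Lemma~\ref{lemma_regularity_flow}), hence $\varphi$ to a legitimate test function. The paper says explicitly that (A3) ``was added for the sake of brevity'' and could probably be removed by a density argument on the test-function space, but that is then extra work you would have to supply, not something that comes for free from (A2).

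Two smaller points. The a priori first-moment bound on an arbitrary measure solution that you propose to extract by truncation is actually free: Definition~\ref{measure_solution_def} requires $t\mapsto\nu[t]$ to be $W_1$-continuous with $\nu[0]=\mu_0\in\probaspace_1(\Z)$, and since $z\mapsto|z|$ is $1$-Lipschitz, Kantorovich duality gives $\int_\Z|z|\,\nu[t](\intd z)\le\int_\Z|z|\,\mu_0(\intd z)+W_1(\nu[t],\mu_0)$, which is continuous in $t$ and hence locally bounded. And the Dobrushin alternative you float at the end would be circular here: the stability estimate of subsection~\ref{sub_dobrushin} is proved by coupling two characteristic flows, which already presupposes that each measure trajectory is a push-forward along such a flow --- the very representation you are trying to establish.
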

The proof of this corollary (appendix, section \ref{proof_thm2}) is a generalization of the method of characteristic flows, classically used in the field of hyperbolic PDE. The proof that the pushforward measure is effectively a measure solution of the mean-field transport equation (\ref{weak_transport_macro}) is mainly based on the change-of-variable formula, stating that for every test function $\phi$, we have $\displaystyle \int_\Z \phi(t,z)\mu[t](\intd z) = \int_\Z\phi(t,X_\infty(t,X,\theta),\theta)\mu_0(\intd X,\intd\theta)$. The continuity of the trajectory $t\mapsto \mu[t]$ is therefore implied by the continuity of $t\mapsto \| X_\infty(t,.)\|_\Y$. Besides, the proof of uniqueness requires additional assumptions on the regularity of the interaction function $g$, namely assumption (A3). It was added for the sake of brevity, but it seems that this assumption can be avoided by adding technical developments using an argument of density of the space of test functions. This regularity enables to prove that the flows $X^\nu$ associated to the velocity field $\G(\nu[t],z)$, where $t\mapsto \nu[t]$ is a fixed measure-trajectory, are continuously differentiable with respect to their arguments. This implies also the regularity of the solution $(t,z)\mapsto \varphi(t,z)$ to the following transport equation
\begin{equation*}
  \left\{\begin{array}{l}
  \varphi(0,z) = \varphi_0(z)\\
  \displaystyle \frac{\partial \varphi}{\partial t}(t,z) + \frac{\partial \varphi}{\partial X}(t,z)^\T\G(\nu[t],z) = 0
  \end{array}\right.
\end{equation*}
As $\varphi$ is regular, it can be used as a test function itself. If $t\mapsto \nu[t]$ is a measure-solution of (\ref{weak_transport_macro}), it can be shown that $t\mapsto \int_\Z \varphi(t,z)\nu[t](\intd z)$ is constant. This implies in particular that $\nu[t]$ is the pushforward measure of $\mu_0$ by the map $X^\nu(t,.)$. It follows that $X^\nu$ and $X_\infty$ satisfies the same equation (\ref{equ_flow_X}) and we conclude by uniqueness of the characteristic flow provided by theorem 1.

Alternative proofs of existence and uniqueness of the MFL distribution can be found in \cite{Lagoutiere2017} or in \cite{Bolley2011}, with weaker assumptions made on the velocity field. \cite{Lagoutiere2017} uses Filipov characteristics and compactness arguments to solve the transport equation associated to a bounded velocity field with a finite set of discontinuities. The velocity field considered in \cite{Bolley2011} is not globally Lipschitz continuous, as in our case. The resolution of an equation similar to (\ref{equ_X_macro1}) follows an iterative procedure : the measure trajectory $t\mapsto \mu^n[t]$ is fixed at iteration $n$, and is used to compute the characteristic flow $X^{\mu^n}$, by solving a standard differential equation ; the distribution $t\mapsto \mu^{n+1}[t]$ chosen at the next iteration is the pushforward measure of $\mu_0$ by the characteristic flow $X^{\mu^n}$.

Let us now consider the case where the initial distribution $\mu_0$ is absolutely continuous with respect to the Lebesgue measure $\lambda^{\otimes d_z}$. We denote by $p_0:\Z\rightarrow \R_+$ its associated probability density. $p_0$ can be factorized in two terms using the chain rule.
\begin{equation*}
  \forall (X,\theta)\in \Z,~p_0(X,\theta) = p_0^{X|\theta}(X|\theta)p_0^\theta(\theta)
\end{equation*}
It follows from proposition 4 that, in this case, the MFL distribution $\mu[t]$ is absolutely continuous for all time $t\in \R_+$, and that the associated density $p_t:\Z\rightarrow \R_+$ is given by change-of-variable formula.
\begin{equation*}
  \forall (X,\theta)\in \Z,~\forall t\in \R_+,~p_t(X,\theta) = p_0^{X|\theta}\left(X_\infty^{-1}(t,X,\theta)|\theta\right)p_0^\theta(\theta)\det\left(\frac{\partial X_\infty^{-1}}{\partial X}(t,X,\theta)\right)
\end{equation*}
In the above equation, for all $t\in \R_+$ and $\theta\in \Theta$, $X\in \X\mapsto X_\infty^{-1}(t,X,\theta)$ is the inverse function of $X\in \X\mapsto X_\infty(t,X,\theta)$ and $\displaystyle \frac{\partial X_\infty^{-1}}{\partial X}(t,X,\theta)$ is the Jacobian matrix of this function.

The fact that $X_\infty(t,.,\theta)$ is a one-to-one map from $\X$ to $\X$ is a consequence of the flow property. This property considers a variation of the initial time in equation (\ref{equ_flow_X}). There exists an unique map $(t,t_0,z)\mapsto X_\infty(t,t_0,z)$ satisfying 
\begin{equation*}
  \left\{\begin{array}{l}
  X_\infty(t_0,t_0,X,\theta) = X\\
  \displaystyle \forall t\in \R_+,~\frac{\partial X_\infty}{\partial t}(t,t_0,X,\theta) = \int_\Z g(X_\infty(t,t_0,X,\theta),\theta,X',\theta')\mu[t](\intd X',\intd \theta')
  \end{array}\right.
\end{equation*}
By unicity, we have that for all $t,t_0 \in \R_+$ and $(X,\theta)\in \Z$, $X_\infty(t,t_0,X_\infty(t_0,t,X,\theta),\theta) = X$. It follows that $X_\infty^{-1}(t,X,\theta) = X_\infty(0,t,X,\theta)$. The differentiability of the map $X\in \X\mapsto X_\infty^{-1}(t,X,\theta)$ is a consequence of assumption (A3).

In the case of Schneider model, the MFL distribution $\mu[t]$ is the law of the random variable $(r_\infty(t,r^0,\theta),\theta)$ with $\theta\sim p_0^\theta$. Therefore $\mu[t]$ is entirely determined by the map $(t,\theta)\in \R_+\times \theta\mapsto r_\infty(t,r^0,\theta)$. This is due to the fact that the initial state is constant over the whole population, equal to $r^0$. We can notice that this situation seems much simpler than the case where the marginal density of the initial state is absolutely continuous : here, we only need to compute the characteristic flow $r_\infty$, and we do not need to compute its inverse function and its derivative. Section 4 describes a methodology to approximate the characterstic flow $r_\infty$ and therefore to sample the MFL distribution in the specific case of Schneider model.

\subsection{Dobrushin stability and propagation of chaos}
\label{sub_dobrushin}

The relation between the microscopic level, represented by the empirical measure of the population, and the mean-field level, represented by solution of (\ref{weak_transport_macro}), is mainly based on a convergence of the initial empirical distribution $\mu[0,Z_N^0]$ towards the initial distribution $\mu_0$ as N $\rightarrow +\infty$ and on the fact that this results can be extended at all time $t\in \R_+$, i.e. the same type of convergence is verified by $\mu[t,Z_N^0]$ towards $\mu[t]$. The convergence discussed here is the one associated to the metric $W_1$, whose expression is recalled in definition \ref{def_wasserstein}, and which metrizes the weak convergence in the space of probability distribution (see corollary 6.13 in \cite{Villani2008}). According to \cite{Varadarajan1958}, if $(z_n^0)_{n\in \N}$ is a sequence of independent random variables of distribution $\mu_0$, and if for all $N > 1$ $Z_N^0 = (z_1^0,...,z_N^0)$, we have that $\displaystyle \lim_{N\rightarrow+\infty}W_1\left(\mu[0,Z_N^0],\mu_0\right) = 0$ almost surely in $\proba$. This means that there exists $\Omega^*\in \mathcal{F}$ such that $\proba(\Omega^*)=1$ and for all $\omega \in \Omega^*$, we have for all $\phi\in C_L(\Z)$ any Lipschitz continuous function that
\begin{equation}
  \lim_{N\rightarrow +\infty}\frac{1}{N}\sum_{i=1}^N\phi(z_i^0(\omega)) = \int_\Z \phi(z)\mu_0(\intd z)
\end{equation}
A concise proof of this result can also be found in \cite{Golse2013} (theorem 3.3.5).  This result is a consequence of the strong law of large numbers and of the fact that the space of continuous functions with compact support over $\R^{d_z}$ is separable.
The rate of convergence of the random variable $W_1(\mu[0,Z_N^0],\mu_0)$, along with  Wasserstein distance of higher orders, is a well-documented topic in the literature. \cite{Dudley1969} stated that in the case where $\mu_0$ is absolutely continuous with respect to the Lebesgue measure, i.e. can be associated to a probability density $f_0:\Z\rightarrow \R_+$, and if $d_\Z \geq 2$, then there exists a constant $C(\mu_0) >0$ such that for all $N\in \N^*$, $\expect_{Z_N^0\sim \mu_0^{\otimes N}}W_1\left(\mu[0,Z_N^0],\mu_0\right)\leq C(\mu_0)N^{-1/d_\Z}$. In the example of the Schneider model, the chosen initial density described in equation (\ref{eq_mu0_schneider}) is not absolutely continuous with respect to the Lebesgue measure over $\Z$, since the marginal distribution $\mu_0^r$ of the variable $r^0$ is reduced to a Dirac distribution $\delta_{r^0}$, representing the fact hat all plants have the same size $s^0 = s_me^{r^0}$ initially. However, the marginal $\mu_0^\theta$ of variable $\theta$ is associated to a density over $\Theta$, so the upper-bound of Dudley can be rewritten as $\expect_{Z_N^0\sim \mu_0^{\otimes N}}W_1\left(\mu[0,Z_N^0],\mu_0\right)\leq C(\mu_0^\theta)N^{-1/d_\Theta} = C(\mu_0^\theta)N^{-1/4}$. Faster convergence rates can be obtained in the case where probability measure $\mu_0$ is less regular. We can quote notably \cite{Weed2017} in the case where $\mu_0$ is compactly-supported, and \cite{Lei2018} for a generalization to unbounded metric spaces. 

If the random variable $z^0\sim \mu_0$ has at least one of its component with a probability density, then the weak and almost sure convergence of $\mu[0,Z_N^0]$ towards $\mu_0$ means visually that the point cloud $(z_i^0)_{1\leq i\leq N}$ is more and more alike the continuous set represented by measure $\mu_0$. In what follows, the argument of Dobrushin stability (see proposition 4 in \cite{Dobrushin1979} or theorem 3.3.3 in \cite{Golse2013}) is used to prove that for all time $t\in \R_+$ $\displaystyle \lim_{N\rightarrow +\infty} W_1(\mu[t,Z_N^0],\mu[t]) = 0$ almost surely.
\newcommand{\expectation}[2]{\mathbb{E}_{#1\sim #2}}

\begin{theorem}
  Let $\mu_0\in \probaspace(\Z)$ be a probability measure having a compact support, such that the support is included in $\mathcal{B}(0,R_0) = \{z\in \Z | |z| \leq R_0\}$ for some $R_0>0$. Let $(z_n^0)_{n\in \N^*}$ be a sequence of independent random variables of distribution $\mu_0$ and $\forall N >1, Z_N^0 = (z_1^0,...,z_N^0)$. There exists $\Omega^* \in \F$ such that $\proba(\Omega^*) = 1$ and such that $\forall \omega\in \Omega^*,~\displaystyle \lim_{N\rightarrow +\infty}W_1(\mu[0,Z_N^0(\omega)],\mu_0) = 0$. We introduce $\mu[t]$ the solution of problem (\ref{weak_transport_macro}) for an interaction function $g$ satisfying assumptions (A1), (A2), (A3) and (A4). Then we have
  \begin{equation}
    \forall t\in \R_+,~\forall \omega\in \Omega^*,~\lim_{N\rightarrow +\infty}W_1(\mu[t,Z_N^0(\omega)],\mu[t]) = 0
  \end{equation}
  \label{thm_dobrushin}
\end{theorem}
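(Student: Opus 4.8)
The plan is to establish the result via Dobrushin's stability estimate, which controls the Wasserstein distance between two measure solutions of transport equations in terms of the initial distance and the regularity of the velocity field. First I would fix $\omega \in \Omega^*$ and observe that, by Proposition 3, the empirical measure $t \mapsto \mu[t, Z_N^0(\omega)]$ is a measure solution of the transport equation with the non-local velocity field $\G_N$, while $t \mapsto \mu[t]$ is the measure solution with velocity field $\G(\mu,z) = \int_\Z g(z,z')\mu(\intd z')$ provided by Corollary 1. The difficulty is that these are solutions of two \emph{different} equations, so I would split the comparison into two pieces: the discrepancy between $\G_N$ and $\G$ as functionals, and the intrinsic contraction/expansion properties of the common limiting field $\G$.

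For the first piece, I would bound $|\G_N(\nu, z) - \G(\nu, z)|$ uniformly for $\nu$ supported in a fixed ball. From the definition in equation (\ref{velocity_micro}),
\begin{equation*}
  \G_N(\nu, z) - \G(\nu, z) = \frac{1}{N-1}\int_\Z g(z,z')\nu(\intd z') - \frac{1}{N-1} g(z,z),
\end{equation*}
so under assumption (A1) and the a priori bound on the support (see below), this is $O(1/N)$ uniformly on compact sets. For the second piece, I would show that $\G$ is Lipschitz in its measure argument for the $W_1$ distance and Lipschitz in $z$, both locally uniformly: assumption (A2) gives the Lipschitz dependence in $z$ with a constant controlled by the size of the support, assumption (A4) is not needed here but (A2) applied in the dual/coupling formulation of $W_1$ gives $|\G(\mu_1,z) - \G(\mu_2,z)| \le C\, W_1(\mu_1,\mu_2)$ on a fixed ball. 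Together with a Gronwall argument along the characteristic flows — comparing $X_\infty$ driven by $\mu[t]$ with the microscopic flow $X_i(t)$ driven by $\mu[t,Z_N^0]$ — this yields an estimate of the form
\begin{equation*}
  W_1\big(\mu[t,Z_N^0(\omega)], \mu[t]\big) \le e^{C(t)\, t}\left( W_1\big(\mu[0,Z_N^0(\omega)],\mu_0\big) + \frac{C'(t)}{N} \right),
\end{equation*}
and letting $N \to \infty$ with the hypothesis $W_1(\mu[0,Z_N^0(\omega)],\mu_0) \to 0$ closes the argument.

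A preliminary step that makes everything work is the uniform-in-$N$, uniform-in-time control of supports: since $\mu_0$ is supported in $\mathcal{B}(0,R_0)$, every $z_i^0(\omega)$ lies there, and using assumption (A1) with a Gronwall estimate on each $|X_i(t)|$ (and on $|X_\infty(t,X,\theta)|$), all the measures $\mu[t,Z_N^0(\omega)]$ and $\mu[t]$ remain supported in a ball $\mathcal{B}(0,R(t))$ with $R(t)$ independent of $N$. This is what lets me replace the merely locally-Lipschitz field $g$ by a globally Lipschitz one on the relevant region, so that the constants $C(t), C'(t)$ above are finite. The main obstacle I anticipate is precisely handling this localization cleanly while keeping track of the diagonal correction term $\frac{1}{N-1}g(z,z)$ and the factor $\frac{N}{N-1}$ in $\G_N$; these are harmless asymptotically but must be carried through the Gronwall estimate. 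Finally, the almost-sure conclusion over the single event $\Omega^*$ is immediate since the estimate is deterministic once $\omega$ is fixed and $\Omega^*$ is exactly the full-probability event on which the initial convergence holds (guaranteed by Varadarajan's theorem as recalled before the statement). Propagation of chaos then follows as a standard corollary: the asymptotic factorization of any fixed number of coordinates is equivalent to the convergence of the empirical measure to the deterministic limit $\mu[t]$.
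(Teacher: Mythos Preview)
Your overall architecture is the same as the paper's: couple the empirical and mean-field flows, split the error into (i) the discrepancy $\G_N-\G$ of order $1/(N-1)$ and (ii) the stability of $\G$ in its measure argument, then close with Gr\"onwall. Your support-localization idea (all measures stay in $\mathcal{B}(0,R(t))$ uniformly in $N$) is a legitimate simplification of the paper's moment-based bookkeeping, which instead tracks $\|\hat Z(Z_N^0,t,\cdot)\|_\Y$ and the moments $M^1(0,Z_N^0),M^2(0,Z_N^0)$; under the compact-support hypothesis both routes are fine.

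There is, however, a real gap: your claim that ``assumption (A4) is not needed here'' is wrong, and it breaks the key stability estimate. To obtain
\[
  |\G(\mu_1,z)-\G(\mu_2,z)|=\left|\int_\Z g(z,z')\,(\mu_1-\mu_2)(\intd z')\right|\le C\,W_1(\mu_1,\mu_2)
\]
via any coupling or via Kantorovich duality, you need $z'=(X',\theta')\mapsto g(z,z')$ to be Lipschitz in the \emph{full} variable $z'$. Assumption (A2) controls only the $X'$-dependence; the $\theta'$-Lipschitz part is exactly (A4). This matters because any coupling $\pi_0\in\Pi(\mu[0,Z_N^0],\mu_0)$ must pair atoms $(X_i^0,\theta_i)$ with points $(X',\theta')$ drawn from $\mu_0$, and in general $\theta_i\ne\theta'$; the $\theta$-components do not evolve, so this mismatch persists at all times and enters the Gr\"onwall integrand. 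The paper makes this explicit by merging (A2) and (A4) into a single constant $K_{24}$ and using the joint Lipschitz bound
\[
  |g(z_1,z_1')-g(z_2,z_2')|\le K_{24}\,(1+|z_1|+|z_1'|+|z_2|+|z_2'|)\,(|z_1-z_2|+|z_1'-z_2'|).
\]
Without (A4) you cannot control the $B$-term of your decomposition, and the Gr\"onwall argument does not close. Two minor points: your ``comparing $X_\infty$ with the microscopic flow $X_i(t)$'' is not by itself a $W_1$ estimate---you need to push an initial coupling $\pi_0$ forward by the two flows, as the paper does; and the growth factor is not $e^{C(t)t}$ with bounded $C$, but of order $\exp(e^{Kt})$ because the local Lipschitz constants themselves grow exponentially (the paper remarks on this after inequality (\ref{ineq_dobrushin})).
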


The argument of Dobrushin consists in deriving an upper bound of $W_1(\mu[t,Z_N^0],\mu[t])$ depending on $W_1(\mu[0,Z_N^0],\mu_0)$, which holds for all initial configuration $Z_N^0\in \Z^N$. The derivation of the upper bound is exactly a generalization of Grönwall lemma to characteristic flows of the type of $X_\infty$, i.e. solutions of \textit{differential equations} taking values in space $\X$ and having as initial condition a probability measure over $\Z$. Indeed, we can prove that for all initial configuration $Z_N^0\in \Z^N$, we have
\begin{equation}
  \begin{aligned}
    &\forall t\in \R_+,~W_1(\mu[t,Z_N^0],\mu[t])\leq \exp(F_{\mu_0}(t)+\epsilon_1(t,\mu[t,Z_N^0]))\left(W_1(\mu[0,Z_N^0],\mu_0)\phantom{\frac{1}{N-1}\int_0^t}\right.\\
    &\left.+ \frac{1}{N-1}\int_0^t(E_{\mu_0}(\tau)+\epsilon_2(\tau,\mu[\tau,Z_N^0]))\exp(-F_{\mu_0}(\tau)-\epsilon_1(\tau,Z_N^0))\intd \tau\right)
  \end{aligned}
  \label{ineq_dobrushin}
\end{equation}
The functions $E_{\mu_0}$ and $F_{\mu_0}$ appearing in the previous inequality depend on the first and second moments $M^1_{\mu_0} = \displaystyle \int_\Z|z|\mu_0(\intd z)$ and $M^2_{\mu_0} = \displaystyle \int_\Z |z|^2\mu_0(\intd z)$ of the distribution $\mu_0$. The other functions appearing in the inequality (\ref{ineq_dobrushin}) are such that $\forall t\in \R_+,\forall \omega\in \Omega^*,~\displaystyle \lim_{N\rightarrow +\infty}\epsilon_1(t,\mu[t,Z_N^0]) = \lim_{N\rightarrow +\infty} \epsilon_2(t,\mu[t,Z_N^0]) = 0$.

Historically, \cite{Dobrushin1979} introduced this methodology to obtain uniqueness results on solutions of Vlasov equations. In this article, the studied interaction functions are globally Lipschitz continuous, and the author does not resort to Grönwall lemma. With the same assumptions, a proof of Dobrushin stability was suggested by \cite{Golse2013}, theorem 1.4.3, making clear use of Grönwall lemma. In \cite{Lagoutiere2017}, the proposition 1 gives a quite similar contraction estimate, in the case where the transition function is expressed as a convolution product with the mean-field limit measure. In all aforementionned works, the transport functions $\G_N$ at microscopic level have the same expression as the transport function $\G$ at macroscopic level, and the physical models do not recquire to exclude the interaction of a particle with itself, notably thanks to a property of anti-symmetry of the underlying potential. In our case, the transition function is only assumed to be locally Lipschitz continuous, but this difficulty is bypassed by assuming that the i-nitial distribution $\mu_0$ has a compact support. The obtained upper-bound of $W_1\left(\mu[t,Z_N^0],\mu[t]\right)$ in (\ref{ineq_dobrushin}) is a much faster increasing function than in \cite{Golse2013}. The assumption on global Lipschitz continuity of the function $g$ leads to a factor of order $e^{K t}$ for some constant $K$, whereas the assumptions on quadratic variations of the functions, namely (A2) and (A4), leads to a factor of order $\exp\left(e^{Kt}\right)$ for some constant $K$, because of two subsequent applications of Grönwall lemma (see the proof in appendix \ref{proof_thm_dobrushin}). Needless to say that the upper bound in (\ref{ineq_dobrushin}) seems far from being optimal.

The next corollary uses the argument of Dobrushin stability to show the relation between the solution of the microscopic system (\ref{system_micro}) and the MFL characteristic flow.

\begin{corollary}
  \label{corollary_dobrushin}
  With the same assumptions as in theorem \ref{thm_dobrushin}, we consider the sequence of random variables $(z_n^0)_{n\in \N^*}$ independent and of distribution $\mu_0$. For all $N>1$, we define $Z_N^0 = (z_1^0,z_2^0,...,z_N^0)\in \Z^N$ the initial configuration of the system (\ref{system_micro}) and $t\in\R_+\mapsto (X_1(t,Z_N^0),...,X_N(t,Z_N^0))$ the solution of the system (\ref{system_micro}). Then we have
  \begin{equation*}
    X_1(t,Z_N^0)\conv{N}{+\infty}{a.s.} X_\infty(t,z_1^0)
  \end{equation*}
\end{corollary}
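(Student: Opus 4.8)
The plan is to compare the two trajectories $t\mapsto X_1(t,Z_N^0)$ and $t\mapsto X_\infty(t,z_1^0)$ by a Grönwall estimate, using Theorem \ref{thm_dobrushin} to control the discrepancy between the two velocity fields that drive them. Fix $\omega\in\Omega^*$ and, since the statement is pointwise in $t$, a finite horizon $T>0$; all constants below are understood on $[0,T]$. Writing $z_1^0=(X_1^0,\theta_1)$ and $z_1(t,Z_N^0)=(X_1(t,Z_N^0),\theta_1)$, one first notes, exactly as in the derivation of (\ref{weak_transport_micro}), that $\frac{\intd}{\intd t}X_1(t,Z_N^0)=\G_N(\mu[t,Z_N^0],z_1(t,Z_N^0))$ for the solution of (\ref{system_micro}), whereas $X_\infty(t,z_1^0)$ solves (\ref{equ_flow_X}), i.e. $\frac{\partial}{\partial t}X_\infty(t,z_1^0)=\G(\mu[t],X_\infty(t,z_1^0),\theta_1)$ with $\G$ given by (\ref{velocity_g}); moreover $X_1(0,Z_N^0)=X_\infty(0,z_1^0)=X_1^0$.

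Set $u_N(t)=|X_1(t,Z_N^0)-X_\infty(t,z_1^0)|$, so $u_N(0)=0$. Inserting the intermediate quantities $\G(\mu[t,Z_N^0],z_1(t,Z_N^0))$ and $\G(\mu[t],z_1(t,Z_N^0))$, I would bound $u_N'(t)$ by the sum of three terms: (I) $|\G_N(\mu[t,Z_N^0],z_1(t,Z_N^0))-\G(\mu[t,Z_N^0],z_1(t,Z_N^0))|$, which by definition of $\G_N$ equals $\frac{1}{N-1}|\int_\Z g(z_1(t,Z_N^0),z')\,\mu[t,Z_N^0](\intd z')-g(z_1(t,Z_N^0),z_1(t,Z_N^0))|$ and is $O(1/N)$ uniformly on $[0,T]$ by (A1) and an a priori bound on the trajectories; (II) $|\G(\mu[t,Z_N^0],z_1(t,Z_N^0))-\G(\mu[t],z_1(t,Z_N^0))|$, bounded by $C_T\,W_1(\mu[t,Z_N^0],\mu[t])$ via Kantorovich--Rubinstein duality, since both measures are supported in a common ball on which $z'\mapsto g(z_1(t,Z_N^0),z')$ is Lipschitz (constant controlled by (A2) and (A4)) and hence extends to a globally Lipschitz function; and (III) $|\G(\mu[t],X_1(t,Z_N^0),\theta_1)-\G(\mu[t],X_\infty(t,z_1^0),\theta_1)|$, which using (A2) inside the integral defining $\G$ is at most $L(t)\,u_N(t)$ with $L(t)=K_2\left(1+2\int_\Z|z|\,\mu[t](\intd z)\right)$.

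Collecting the three estimates gives $u_N'(t)\le L(t)u_N(t)+\varepsilon_N(t)$ with $\varepsilon_N(t)\le \frac{C_T}{N-1}+C_T\,W_1(\mu[t,Z_N^0],\mu[t])$, and Grönwall's lemma together with $u_N(0)=0$ yields $u_N(t)\le \exp\left(\int_0^T L(s)\,\intd s\right)\int_0^T\varepsilon_N(s)\,\intd s$. To conclude, the first part of $\int_0^T\varepsilon_N$ is $O(T/N)$, and for the second I would upgrade Theorem \ref{thm_dobrushin} to the uniform statement $\sup_{t\in[0,T]}W_1(\mu[t,Z_N^0],\mu[t])\to 0$, which is already contained in inequality (\ref{ineq_dobrushin}): its right-hand side is nondecreasing in $t$ and the error terms $\epsilon_1,\epsilon_2$ vanish uniformly on $[0,T]$. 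Hence $u_N(t)\to 0$ for every $t$, which is the claim.

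The step I expect to be most delicate is the bookkeeping that keeps every constant independent of $N$ on $[0,T]$: the radius of the common ball entering (I)--(II), the Lipschitz constant $C_T$ in (II), and the moment bound in $L(t)$. This rests on the almost sure uniform bound $|z_i^0(\omega)|\le R_0$ (valid because $\mathrm{supp}\,\mu_0\subset\mathcal{B}(0,R_0)$), propagated through (A1) into an $N$-free a priori estimate for the trajectories of both (\ref{system_micro}) and (\ref{equ_flow_X}) on $[0,T]$ --- the very estimate already used in the proofs of Theorems \ref{thm_flow_X} and \ref{thm_dobrushin} --- together with the uniform-in-time Wasserstein convergence extracted above. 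Once these uniformities are secured, the remainder is a single application of Grönwall's lemma.
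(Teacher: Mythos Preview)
Your argument is correct and follows the same overall strategy as the paper's proof: compare the two velocity fields driving $X_1(t,Z_N^0)$ and $X_\infty(t,z_1^0)$, absorb the Lipschitz part into a Gr\"onwall factor, and let the remainder vanish via the Dobrushin estimate of Theorem~\ref{thm_dobrushin}. The difference is purely in the decomposition. The paper recycles directly the two terms $A_N$ and $B_N^{\pi_0}$ from the proof of Theorem~\ref{thm_dobrushin} (equation~(\ref{equ_A_B})), i.e.\ it keeps the coupling $\pi_0\in\Pi(\mu[0,Z_N^0],\mu_0)$ explicit and bounds $B_N^{\pi_0}$ through the quantity $D_N^{\pi_0}$ already controlled there, only taking the infimum over $\pi_0$ at the very end to recover $W_1(\mu[0,Z_N^0],\mu_0)$. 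Your three-term split (I)--(III) replaces this coupling argument by a direct appeal to Kantorovich--Rubinstein duality in (II), which is cleaner and more self-contained, at the modest cost of having to check explicitly that $z'\mapsto g(z_1(t,Z_N^0),z')$ is Lipschitz on the common supporting ball (this uses both (A2) and (A4), as you note). Both routes yield the same Gr\"onwall inequality up to inessential constants; your final passage to the limit via the monotone-in-$t$ structure of (\ref{ineq_dobrushin}) is equivalent to the paper's dominated-convergence argument. The bookkeeping you flag as delicate (the $N$-free a priori bound on all trajectories on $[0,T]$, coming from (A1) and $\mathrm{supp}\,\mu_0\subset\mathcal{B}(0,R_0)$) is exactly what the paper uses as well, so there is no hidden difficulty there.
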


The above results provides a more visual intuition of the asymptotic link between the microscopic level of system (\ref{system_micro}) and the mean-field limit. The trajectories obtained by solving system (\ref{system_micro}) are more and more alike the trajectories given by the MFL characteristic flow $X_\infty$. A generalization to any sub-group of fixed size within the population can also be obtained. Indeed, for $k\in \N^*$ and for $N > k$, any sub-group of size $k$ $(X_{i_1}(t,Z_N^0),...,X_{i_k}(t,Z_N^0))$, with $i_1,...,i_k$ being distinct integers in $\llbracket 1;N\rrbracket$, has the same distribution as $(X_1(t,Z_N^0),...,X_k(t,Z_N^0))$ by symmetry. According to the previous corollary, the almost sure convergence for a single individual can be generalized to any sub-group of size $k$.
\begin{equation*}
  (X_1(t,Z_N^0),...,X_k(t,Z_N^0))\conv{N}{\infty}{a.s.}(X_\infty(t,z_1^0),...,X_\infty(t,z_k^0))
\end{equation*}
The limit distribution of the sequence of random variables $((X_1(t,Z_N^0),...,X_k(t,Z_N^0))_{N > k}$ is factorized and is exactly $\mu[t]^{\otimes k}$, as $(X_\infty(t,z_1^0),...,X_\infty(t,z_k^0))\sim \mu[t]^{\otimes k}$. For finite $N$ and for $t >0$, the random variables $X_1(t,Z_N^0),...,X_k(t,Z_N^0)$ are strongly interdependent. At the limit $N\rightarrow +\infty$, the individuals are independent. More accurately, if one focuses on a finite group of individuals, while the rest of the population is increasing towards infinity, then these observed individuals have independent trajectories in the probabilistic sense. Their distribution is said to be asymptotically factorized. An alternative proof of the phenomenon of chaos propagation is given in \cite{Golse2013b}, section 1.6. This proof is based on a characterization of asymptotically factorized sequence of probability measures (see theorem 1.6.2 in \cite{Golse2013b}).

The phenomenon of chaos propagation may have applications for statistical inference, paving the way for methodologies based on variational Bayes approximation. Let us consider the following example : we aim at studying the dynamics of an heterogeneous crop from the observation of the growth of few dozens of plants. Their growth is assumed to be well represented by a model of the form of \cite{Schneider2006}, but some parameters of the interaction function $g$ are unknown. In general, we do not know accurately the exact number $N$ of individuals in the population, but we know that $N$ is much larger than the number of observed individuals. In a Bayesian setting, i.e. when we want to compare prior knowledge and assumptions with field observations, the resulting inference problem is of great difficulty. Among other things, it requires to determine the posterior distribution of the number of individuals in the population\footnote{A possible prior for the random variable $N$ would be a Poisson distribution.}, but also the posterior distributions of all the unobserved individuals, i.e. of their positions and of their characteristics $\gamma$ and $S$. This is clearly intractable for a population having the dimension of a crop. Otherwise, if we make the approximation that the observed individuals are in interaction with an infinity of individuals, which is quite a relevant approximation after all, and that this continuum of individuals is represented by the MFL distribution $\mu[t]$, then the inference problem is significantly simplified : the observed individuals are then mutually independent, and there is no need to extract the information of all the unobserved individuals. Of course, the difficulty is elsewhere : how to simulate the MFL distribution efficiently, so that it can be used within a statistical inference process. The next section gives a first attempt to answer this issue.

\section{Simulation of the MFL distribution using Gaussian process regression}
\label{section_simulation}

In this section, we present a preliminary work on the numerical approximation of the MFL distribution $t\mapsto \mu[t]$, which is defined as the measure-solution of variational problem (\ref{weak_transport_macro}). So it boils down to solve numerically a hyperbolic PDE with non-local velocity. The simulation of solutions of kinetic equations is a well-documented in the literature. Amongst others, we can quote the upwind scheme introduced by \cite{Lagoutiere2017}, which consists in a reconstruction of the solution using finite volumes. The reconstruction is piecewise constant over a discretization of the phase space $\Z$. In the case of Schneider heterogeneous population model, the space is of dimension higher than 3, and this makes the discretization of the space a too expensive task on the computational view point. This constraint of the dimension leads rather towards mesh-free methods.

The method suggested here consists in approximating by regression a consistent sequence of reconstructions of the exact characteristic flow. It is therefore a semi-Lagrangian method with an interpolation step. The family of functions used for the interpolation is defined from the interaction function $g$, and takes the form of linear combinations of reproducing kernels. The proof of the consistency of the scheme is an on-going work. However, some numerical tests seems to confirm that this approach is relevant.

For the sake of simplicity, the method is presented through the simulation of the Schneider model. In this case, the MFL distribution is the law of the random variable $(r_\infty(t,r^0,\theta),\theta)$, where $r^0\sim \delta_{r^0}$ is a constant, where $\theta\sim p_0^\theta$ is defined in equation (\ref{eq_mu0_schneider}), and $r_\infty$ is the characteristic flow defined by equation (\ref{eq_flow_r}). By change of variable, we can consider the characteristic flow associated to the size variable $s$, which is defined as the solution of the equation
\begin{equation}
  \begin{aligned}
    &\forall \theta \in \Theta,~\left\{\begin{array}{l}
    s_\infty(0,s^0,\theta) = s^0\\
    \displaystyle \frac{\partial s_\infty}{\partial t}(t,s^0,\theta) = \int_\Theta g(s_\infty(t,s^0,\theta),\theta,s_\infty(t,s^0,\theta'),\theta')p_0(\theta')\lambda^{\otimes 4}(\intd \theta')
    \end{array}\right.\\
    &\text{with }g(s,(\vec{x},S,\gamma),s',(\vec{x}',S',\gamma')) = \gamma s\left(\log(S/s_m)(1-C(s,s',|\vec x-\vec x'|))-\log(s/s_m)\right)
  \end{aligned}
  \label{eq_flow_s}
\end{equation}
So our aim is to approximate the function $(t,\theta)\in \R_+\times \Theta \mapsto s_\infty(t,s^0,\theta)$.

A direct resolution of equation (\ref{eq_flow_s}) using an explicit Euler method, with time discretization $\Delta t >0$, chosen small enough, would lead to a sequence of functions $(s_n)_{n\in \N}$ defined an induction equation.
\begin{equation}
  \forall \theta\in \Theta,~ \left\{\begin{array}{l}
  s_0(\theta) = s^0\\
  \displaystyle \forall n\in \N,~s_{n+1}(\theta) = s_n(\theta) + \Delta t\int_\Theta g(s_n(\theta),\theta,s_n(\theta'),\theta')p_0^\theta(\theta')\lambda^{\otimes 4}(\intd \theta')
  \end{array}\right.
\end{equation}
This sequence of functions cannot be computed exactly, as the integral is not analytical. This integral is in fact an expectation with respect to the density $p_0^\theta$. Let $\omega_M = (\theta_i^\omega)_{1\leq i\leq M}$ be sample of the distribution $\mu_0^\theta$ of density $p_0^\theta$. We consider the sequence of functions $(s_n(.,\omega_M))$ defined as the empirical approximation of the sequence $(s_n)_{n\in \N}$ using the sample $\omega_M$.
\begin{equation}
  \forall \theta\in \Theta,~\left\{\begin{array}{l}
  s_0(\theta,\omega_M) = s^0\\
  \displaystyle \forall n\in \N,~ s_{n+1}(\theta,\omega_M) = s_n(\theta,\omega_M) + \frac{\Delta t}{M}\sum_{i=1}^Mg(s_n(\theta,\omega_M),\theta,s_n(\theta^\omega_i,\omega_M),\theta_i^\omega)
  \end{array}\right.
  \label{eq_induction_r}
\end{equation}
It is quite straightforward to prove that for any fixed $n\in \N$, the sequence $(s_n(.,\omega_M))_{M\in \N^*}$ is an almost sure approximation of the characteristic flow at time $n\Delta t$.
\begin{equation*}
  \forall \theta\in \Theta,~\forall n\in \N,~s_n(\theta,\omega_M) \conv{M}{\infty}{a.s.}s_n(\theta)
\end{equation*}
Indeed, the sequence of functions $(s_n(.,\omega_M))$ is stochastic because of its dependency with respect to the sample $\omega_M$. The above convergence is mainly based on the law of large numbers, enabling to prove a uniform almost sure convergence over the space $\Theta$. So $(s_n(.,\omega_M))_{n\in \N}$ constitutes a simple approximation of the characteristic flow, but it has some limitations. It can only give a local estimation of the function $s_n$. Indeed, to compute $s_{100}(\theta,\omega_M)$ at a given point $\theta_0\in \Theta$, then it requires the computation of $s_{99}(\theta_0,\omega_M)$, and in turn the computation of $s_{98}(\theta,\omega_M)$, etc... We cannot know the values of the function $s_n(.,\omega_M)$ outside of the set of points we have decided to observe a priori, from the very initial time $n=0$, this set of observation points including also the sample $\omega_M$. For a global approximation of the function $s_n$, a grid covering the whole space $\theta$ has to be build, so this boils down exactly to the construction of a mesh, which is to be avoided in our case. An interpolation method is used at this point so that the local information given by some values of $s_n(.,\omega_M)$ could be extended to the whole space $\Theta$.

The basis of functions used for interpolation has been chosen from a qualitative estimation of the correlation between the values of the function $s_n(.,\omega_M)$. According to central limit theorem, the asymptotic covariance matrix of the random variables $s_1(\theta_1,\omega_M)$ and $s_1(\theta_2,\omega_M)$ for $\theta_1$ and $\theta_2$ in $\Theta$ when $M\rightarrow +\infty$ depends on the interaction $g$.
\begin{equation*}
  \begin{aligned}
    &\sqrt{M}\begin{pmatrix} s_1(\theta_1,\omega_M)-s_1(\theta_1)\\ s_1(\theta_2,\omega_M)-s_1(\theta_2)\end{pmatrix} \conv{M}{\infty}{\mathcal{L}}\mathcal{N}_2\left(0,\Delta t^2\Sigma_0(\theta_1,\theta_2)\right)\\
    &\text{with }\Sigma_0(\theta_1,\theta_2) = \begin{pmatrix} \Cov_0(\theta_1,\theta_1)& \Cov_0(\theta_1,\theta_2)\\
      \Cov_0(\theta_1,\theta_2)&\Cov_0(\theta_2,\theta_2)\end{pmatrix}\\
    &\Cov_0(\theta_1,\theta_2) = \Cov_{\theta'}(g(s^0,\theta_1,s^0,\theta'),g(s^0,\theta_2,s^0,\theta'))\\
  \end{aligned}
\end{equation*}
$\forall d\in \N^*,$ $\mathcal{N}_d(\mu,\Sigma)$ is the normal distribution of mean $\mu$ and of covariance matrix $\Sigma$. In other words, the random vector $(s_1(\theta_1,\omega_M),s_1(\theta_2,\omega_M))^\T$ behaves approximately like a Gaussian vector. From this result, we make the approximation that this property holds for all $n\in \N^*$\footnote{this approximation may not be justified theoretically.}.
\begin{equation*}
  \begin{aligned}
    &\forall \theta_1,\theta_2\in \Theta, ~\forall n\in \N,~\begin{pmatrix} s_n(\theta_1,\omega_M)\\s_n(\theta_2,\omega_M)\end{pmatrix}\sim \mathcal{N}_2\left(\begin{pmatrix} s_n(\theta_1)\\s_n(\theta_2)\end{pmatrix},\frac{\Delta t^2}{M}\Sigma_{n-1}(\theta_1,\theta_2)\right)\\
    &\Sigma_{n-1}(\theta_1,\theta_2) = \begin{pmatrix} \Cov_{n-1}(\theta_1,\theta_1)& \Cov_{n-1}(\theta_1,\theta_2)\\
          \Cov_{n-1}(\theta_1,\theta_2)&\Cov_{n-1}(\theta_2,\theta_2)\end{pmatrix}\\
    &\Cov_{n-1}(\theta_1,\theta_2) = \Cov_{\theta'}(g(s_{n-1}(\theta_1),\theta_1,s_{n-1}(\theta'),\theta'),g(s_{n-1}(\theta_2),\theta_2,s_{n-1}(\theta'),\theta'))
    \end{aligned}
\end{equation*}
This reasonable expression of the covariance leads to a choice of interpolation functions being defined from the covariance function, which is by construction a positive kernel.
\begin{equation*}
  k_n(\theta_1,\theta_2) = \frac{\Delta t^2}{M}\Cov_{\theta'}(g(s_{n-1}(\theta_1),\theta_1,s_{n-1}(\theta'),\theta'),g(s_{n-1}(\theta_2),\theta_2,s_{n-1}(\theta'),\theta'))
\end{equation*}
This kernel cannot be used \textit{per se} as the sequence $(s_n)_{n\in \N}$ is unknown. Another kernel is therefore chosen, but still largely inspired from the above expression. As the sequence $(s_n)_{n\in \N}$, they are replaced in the above expression by parametric functions that reproduce roughly their variations over the space $\Theta$. More specifically, polynomial functions $(m^s_n)_{n\in \N}$ of degree 2 were chosen to approximate the sequence $(s_n)_{n\in \N}$.
\begin{equation*}
  m^s_n(\theta) = m^r(\theta;a_n,b_n,c_n) = a_n+b_n^\T\theta+c_n^\T v_{16}(\theta\theta^\T)
\end{equation*}
where $v_{16}:\mathcal{M}_4(\R)\rightarrow \R^{16}$ is the canonical bijection between the square matrices $4\times 4$ and the vector of 16 components. The coefficients $(a_n,b_n,c_n)$ are chosen so that the parametric function is close to function $s_n$ is the $L^2$ sense.
\begin{equation*}
  (a_n,b_n,c_n) = \underset{a,b,c}{\text{argmin}}~\int_\Theta (s_n(\theta)-m^r(\theta;a,b,c))^2p_0^\theta(\theta)\lambda^{\otimes 4}(\intd \theta)
\end{equation*}
Equivalently, $(a_n,b_n,c_n)$ is the solution of a linear system expressed with expectations with respect to the density $p_0^\theta$.
\begin{equation}
  \expect\begin{pmatrix}
  1&\theta^\T&v_{16}(\theta\theta^\T)^\T\\
  \theta& \theta\theta^\T&\theta v_{16}(\theta\theta^\T)^\T\\
  v_{16}(\theta\theta^\T)&v_{16}(\theta\theta^\T)\theta^\T&v_{16}(\theta\theta^\T)v_{16}(\theta\theta^\T)^\T
  \end{pmatrix} \begin{pmatrix} a_n\\b_n\\c_n\end{pmatrix} = \expect\begin{pmatrix} s_n(\theta)\\s_n(\theta)\theta\\s_n(\theta)v_{16}(\theta\theta^\T)\end{pmatrix}
    \label{linear_abc}
\end{equation}
In this linear system, the functions $s_n$ can be replaced by their stochastic approximations $s_n(.,\omega_M)$, and the theoretical mean can be replaced by an empirical mean over the set $\omega_M$.

The final expression for the kernel used for the interpolation depends on the sample $\omega_M$.
\begin{equation*}
  \begin{aligned}
    &k_n(\theta_1,\theta_2) = \frac{\Delta t^2}{M}\left(\frac{1}{M}\sum_{i=1}^Mg(m^s_{n-1}(\theta_1),\theta_1,s_{n-1}(\theta_i^\omega),\theta_i^\omega)g(m^s_{n-1}(\theta_2),\theta_2,s_{n-1}(\theta_i^\omega),\theta_i^\omega)\right.\\
    &\left.- \frac{1}{M^2}\left(\sum_{i=1}^Mg(m^s_{n-1}(\theta_1),\theta_1,s_{n-1}(\theta_i^\omega),\theta_i^\omega)\right)\left(\sum_{i=1}^Mg(m^s_{n-1}(\theta_2),\theta_2,s_{n-1}(\theta_i^\omega),\theta_i^\omega)\right)\right)
  \end{aligned}
\end{equation*}
The theoretical covariance is replaced by an empirical covariance over the sample $\omega_M$ and the characteristic flows $s_{n-1}$ are replaced by either the polynomial functions $m^s_{n-1}$ either the stochastic approximation $s_{n-1}(.,\omega_M)$. If the parametric model $m^s_n$ is not too rough and if $M$ is large, then the above covariance function $k_n$ is consistent with the stochastic behaviour of $s_n(.,\omega_M)$, and $k_n$ is easy to evaluate over the whole space.

In addition to the values $s_n(\omega_M,\omega_M) = (s_n(\theta_i^\omega,\omega_M))_{1\leq i\leq M}$, the function $s_n(.,\omega_M)$ is evaluated over another set of points $\Theta_{1:K} = (\theta_j)_{1\leq j\leq K}$, called \textit{training set}, that can also be taken as a sample from the density $p_0^\theta$. For all $n\in \N$, we extend the values of $s_n(\Theta_{1:K},\omega_M)$ by making the approximation that the values of $s_n(.,\omega_M)$ is a Gaussian process of mean function $\theta\mapsto m^s_n(\theta)$ and of covariance function $(\theta_1,\theta_2)\in \Theta^2\mapsto k_n(\theta_1,\theta_2)$ (cf. \cite{Rasmussen2004} for an introduction to Gaussian processes). In particular, under this approximation, for all $\theta\in \Theta$
\begin{equation*}
  \begin{pmatrix} s_n(\theta,\omega_M)\\ s_n(\Theta_{1:K},\omega_M)\end{pmatrix} \sim \mathcal{N}_{K+1}\left(\begin{pmatrix}m^s_n(\theta)\\m^s_n(\Theta_{1:K})\end{pmatrix},\begin{pmatrix} k_n(\theta,\theta)&k_n(\theta,\Theta_{1:K})\\ k_n(\Theta_{1:K},\theta)&k_n(\Theta_{1:K},\Theta_{1:K})\end{pmatrix}\right)
\end{equation*}
The distribution of $s_n(\theta,\omega_M)$ is given by conditioning with respect to the observed, or rather computed, values of $s_n(\Theta_{1:K},\omega_M)$.
\begin{equation*}
  \begin{aligned}
    &s_n(\theta,\omega_M)|s_n(\Theta_{1:K},\omega_M) \sim \mathcal{N}_1\left(m^s_n(\theta)+k_n(\theta,\Theta_{1:K})k_n(\Theta_{1:K},\Theta_{1:K})^{-1}(s_n(\Theta_{1:K},\omega_M)\right.\\
    &\left.-m^s_n(\Theta_{1:K})),k_n(\theta,\theta)-k_n(\theta,\Theta_{1:K})k_n(\Theta_{1:K},\Theta_{1:K})^{-1}k_n(\Theta_{1:K},\theta)\right)
  \end{aligned}
\end{equation*}
Therefore, under this approximation, the most probable value of $s_n(\theta,\omega_M)$ knowing the values of $s_n(\Theta_{1:K},\omega_M)$ is given by the mode of the above conditional distribution. This is the reconstruction of the characteristic flow we use to estimate it over the whole space $\Theta$.
\begin{equation}
  \begin{aligned}
    &\hat s_n(\theta,\omega_M) = m^s_n(\theta) + \sum_{j=1}^K\alpha_{j,n}k_n(\theta,\theta_j)\\
    &\text{with } \alpha_n = k_n(\Theta_{1:K},\Theta_{1:K})^{-1}(s_n(\Theta_{1:K},\omega_M)-m^s_n(\Theta_{1:K}))
  \end{aligned}
\end{equation}
One can notice that more information could have been used to compute the conditional distribution, as we also know the values of $s_n(\omega_M,\omega_M)$. The reason why the sample $\omega_M$ is omitted is just that inverting a matrix of dimension $K$ is cheaper than inverting a matrix of dimension $M+K$.

The relevancy of the reconstruction can be assessed using a test set $\Theta^t_{1:K} = (\theta^t_j)_{1\leq j\leq K}$, that can also be a sample drawn from density $p_0^\theta$. A mean square error is used for this purpose.
\begin{equation*}
  \forall n\in \N^*,~J_n = \sqrt{\frac{1}{K}\sum_{j=1}^K(\hat s_n(\theta^t_j,\omega_M)-s_n(\theta^t_j,\omega_M))^2}
\end{equation*}
If $J_n$ remains relatively small during the iterations, then the reconstruction of $s_n(.,\omega_M)$ is likely to be relevant.

The different steps of the simulation process are summarized in the following algorithm.
\begin{algorithm}
  \caption{Approximation of the characteristic flow of Schneider model}
  \begin{algorithmic}
    \State \textbf{Input} : size $M$ of the sample $\omega_M$, size $K$ of the training set $\Theta_{1:K}$ and of the testing set $\Theta^t_{1:K}$, $n_{\max}$ the maximal number of iterations.
    \State \textbf{Initialization} :
    \begin{enumerate}
    \item draw a sample $\omega_M$, a training set $\Theta_{1:K}$ and a testing set $\Theta^t_{1:K}$ from the density $p_0^\theta$.
    \item initialization of the characteristic flow $s_0(\Theta_{1:K},\omega_M) = s_0(\Theta^t_{1:K},\omega_M) = r^0\mathbbm{1}_K$, $s_0(\omega_M,\omega_M) = r^0\mathbbm{1}_M$, and the parameters of the $m^r$ functions : $a_0 = r^0$, $b_0 = 0$ and $c_0 = 0$.
    \end{enumerate}
    \For{$n=1:n_{\max}$}
    \begin{enumerate}
    \item update the characteristic flow over the sets $\omega_M$, $\Theta_{1:K}$ and $\Theta^t_{1:K}$ using the induction equation (\ref{eq_induction_r}).
    \item compute the coefficients of $m_n^r$ using the sample $\omega_M$, $s_n(\omega_M,\omega_M)$ to approximate the coefficients of linear system (\ref{linear_abc}).
    \item compute the coefficients $\alpha_n$ by solving the linear system $k_n(\Theta_{1:K},\Theta_{1:K})\alpha_n = s_n(\Theta_{1:K},\omega_M)-m^s_n(\Theta_{1:K})$.
    \item compute the test error $J_n$
    \end{enumerate}
    \EndFor
    \State \textbf{Output} : $(\alpha_n)_{1\leq n\leq n_{\max}}$, $(a_n,b_n,c_n)_{1\leq n\leq n_{\max}}$, $(J_n)_{1\leq n\leq n_{\max}}$.
  \end{algorithmic}
\end{algorithm}

The algorithm was run with the parameters values given in table \ref{table_schneider} and for $n_{\max} = 100$ iterations, with a sample size $M = 1000$ and size of training / reconstruction set of $K = 100$. Figure \ref{test_error} displays the evolution of the test error of the reconstruction $\hat s_n(.,\omega_M)$, along with the test error associated with the polynomial approximation $m^s_n$.
\begin{figure}[H]
  \begin{center}
    \includegraphics[width = \textwidth]{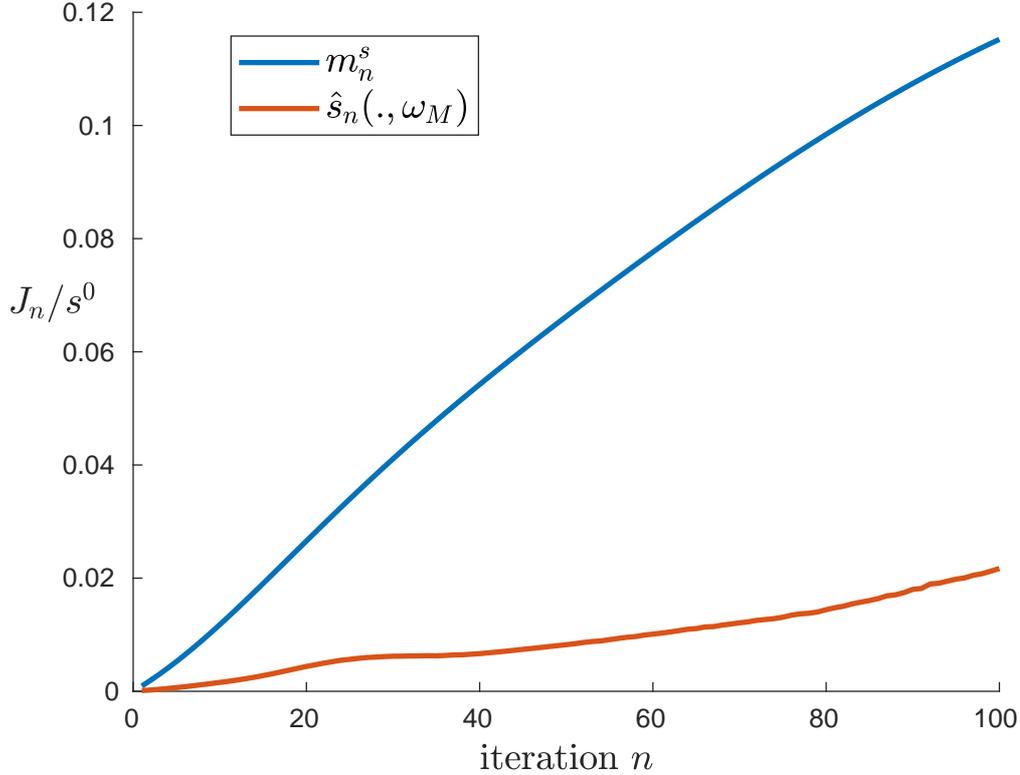}
  \end{center}
  \caption{Evolution of the test errors, renormalized by the initial size $s^0$, measuring the discrepancy between $s_n(.,\omega_M)$ and $m^s_n$ (blue curve) $J_n^p = \sqrt{\frac{1}{K}\sum_{j=1}^K(s_n(\theta^t_j,\omega_M)-m_n^s(\theta_j^t))^2}$, and between $s_n(.,\omega_M)$ and $\hat s_n(.,\omega_M)$ (red curve) $J_n = \sqrt{\frac{1}{K}\sum_{j=1}^K(s_n(\theta^t_j,\omega_M)-\hat s_n(\theta^t_j,\omega_M))^2}$}
  \label{test_error}
\end{figure}

Figure \ref{test_error} shows that both the polynomial approximation and Gaussian process (GP) reconstruction seem to provide a good estimate of the function $\hat s_n(.,\omega_M)$, with a relative remaining lower than $12\%$ for the polynomial approximation, and lower than $2\%$ for the GP reconstruction. The error made by the polynomial function increases almost linearly with the iterations, meaning that the shapes of the functions $(\hat s_n(.,\omega_M))_{n\in \N^*}$ become more and more complex for large $n$, and the approximation by parabolic functions become more and more rough. As a matter of fact, the GP reconstruction has also an increasing test error, but it still provides a significant improvement with respect to the polynomial approximation.

Once $\hat s_n(.,\omega_M)$ is computed, an approximate sample of the marginal distribution of random variable $s^n\sim\mu^s[n\Delta t]$ can be drawn. The sample is obtained by drawing independent samples $(\theta_i')_{1\leq i\leq M}$ from density $p_0^\theta$ and compute the values of the characteristic flow over this sample $(\hat s_n(\theta_i',\omega_M))_{1\leq i\leq M}$. For $n >0$, the marginal distribution $\mu^s[n\Delta t]$ is absolutely continuous with respect to the Lebesgue measure $\lambda^{\otimes 4}$, and the associated density can be estimated by non-parametric kernel regression. We define $p_n^s:s\in \R\mapsto \displaystyle \frac{\partial \mu^s[n\Delta t]}{\partial \lambda^{\otimes 4}}(s)\in \R_+$ the associated density. Figure \ref{densities} illustrates the evolution of the marginal density of $s^n$ with the time.

\begin{figure}[H]
  \begin{center}
    \includegraphics[width = \textwidth]{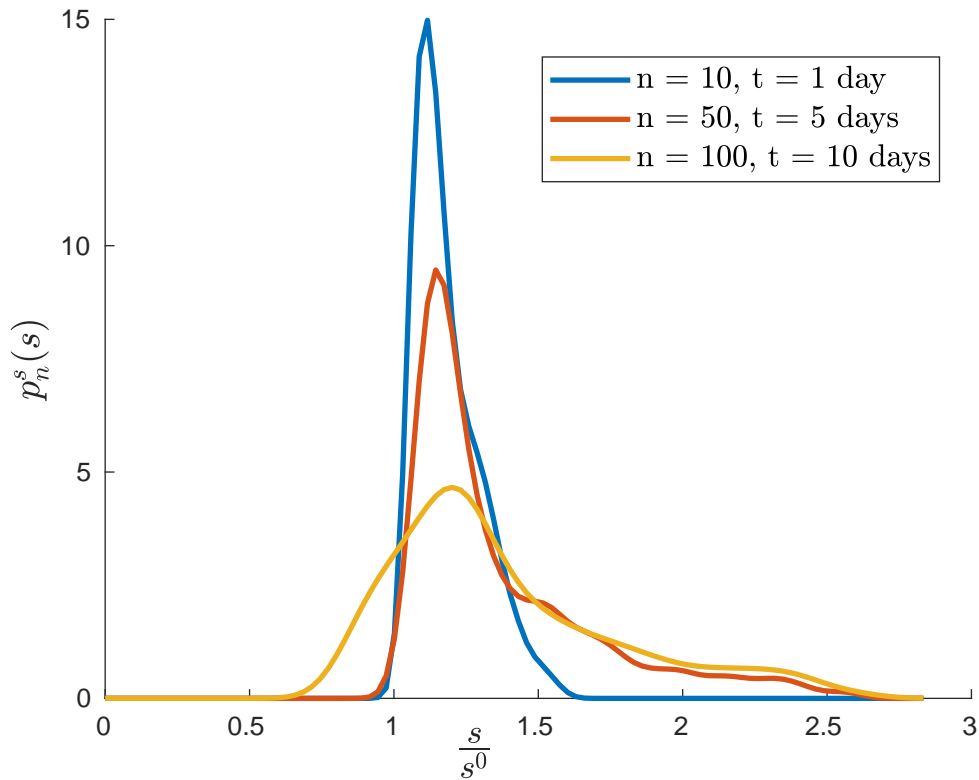}
  \end{center}
  \caption{Evolution of the marginal density of random variable $s^n\sim \mu^s[n\Delta t]$ estimated using Beta kernels from a sample $(\hat s_n(\theta'_i,\omega_M))_{1\leq i\leq M}$ for $n = 10$ ($t = 1$ day), $n = 50$ ($t = 5$ days) and $n = 100$ ($t = 10$ days).}
  \label{densities}
\end{figure}

In figure \ref{densities}, we can observe the distribution of the sizes is in the beginning above the initial size $s^0$ : this is the first stage of the growth in the population, when all plants have their sizes increasing. This corresponds to the densities at times $t = 1$ day and $t = 5$ days. At some point, the competition becomes too important, mainly at the center of the domain $[0;L]^2$ and part of the plants decay, leading to the appearance of plant with sizes lower than $s^0$ at time $t = 10$ days. Besides, the plants that keep on increasing are the ones that are located in close to the edge $x = L, y = L$, which have faster growth rates $\gamma$ and taller asymptotic sizes $S$. These plants therefore their equilibrium size faster than in the rest domain, so that there are very little change between density at $t = 5$ days and density at time $t = 10$ days for the plants of size higher than $1.5~s^0$. This result is consistent with the simulations of the differential system (\ref{system_schneider}) displayed in figure \ref{simu_schneider1}.

A clearer visualization of the global behaviour of the MFL distribution can be made by computing the surface corresponding to the averaged size over the domain $[0;L]^2$, i.e. the expectation $(x,y)\in [0;L]^2\mapsto \hat e_n(x,y) = \expect_{\mu[n\Delta t]}(\hat s_n(\theta,\omega_M)|x,y)$, which is obtained by marginalizing growth parameters $\gamma$ and $S$.
\begin{equation*}
  \begin{aligned}
    &\expect_{\mu[n\Delta t]}(\hat s_n(\theta,\omega_M)|x,y) = \frac{1}{\sigma_S\sigma_\gamma}\int_{S_1(x)}^{S_2(x)}\int_{\gamma_1(y)}^{\gamma_2(y)}\hat s_n((x,y,S,\gamma);\omega_M)\intd \gamma\intd S
  \end{aligned}
\end{equation*}
\begin{equation*}
  \begin{aligned}
    &\hat e_n(x,y) \approx \frac{1}{M}\sum_{i=1}^M \hat s_n((x,y,S_1(x) + \sigma_S u_i, \gamma_1(y) + \sigma_\gamma u'_i);\omega_M)
  \end{aligned}
\end{equation*}
where $(u_i)_{1\leq i\leq M}$ and $(u'_i)_{1\leq i\leq M}$ are independent samples from the uniform distribution $\U([0;1])$.

\begin{figure}[ht!]
  \begin{center}
    \subfloat[$t = 1$ day, $n = 10$]{
      \includegraphics[width = 0.5\textwidth]{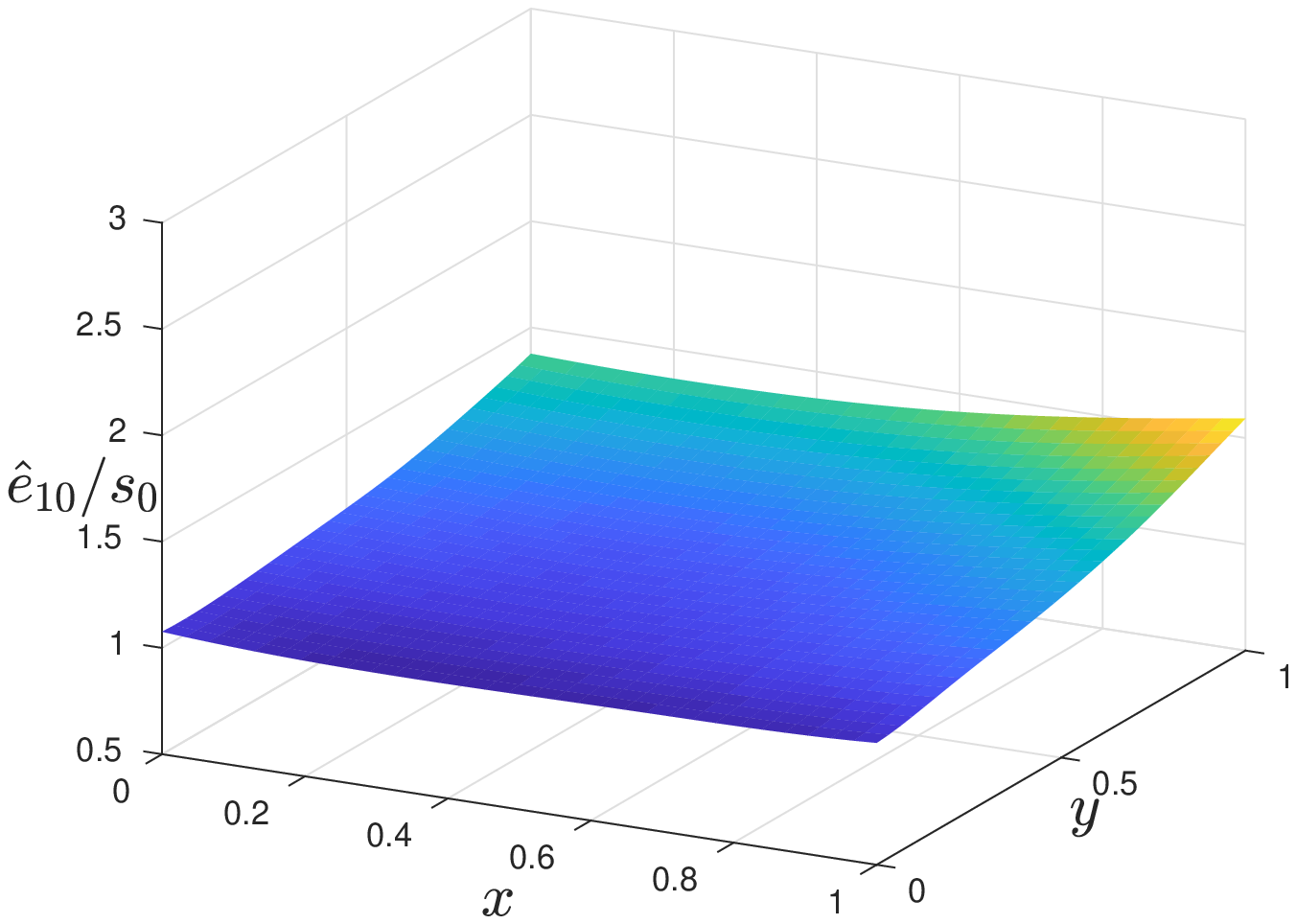}}
    \subfloat[$t = 5$ days, $n = 50$]{
      \includegraphics[width = 0.5\textwidth]{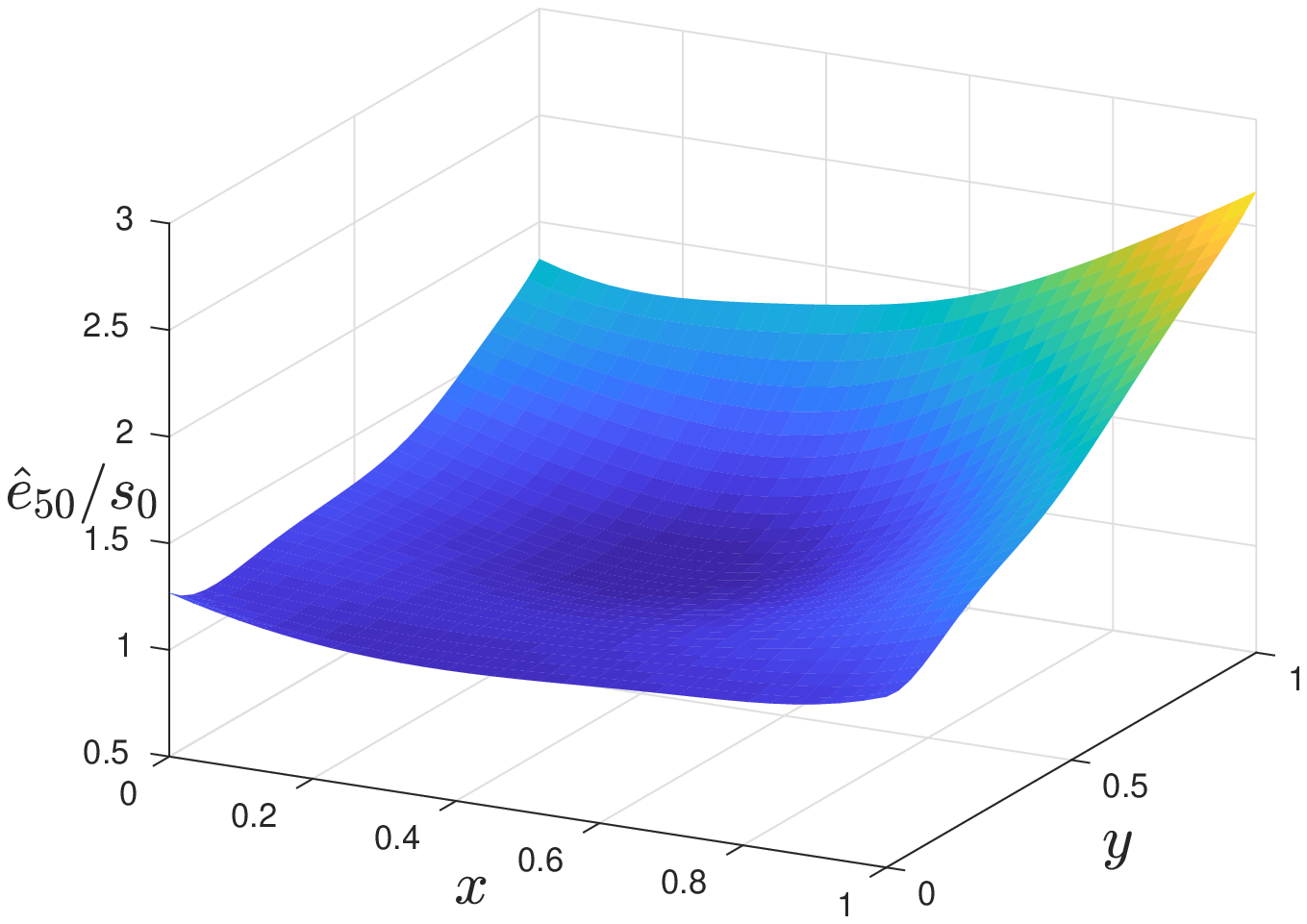}}\\
    \subfloat[$t = 10$ days, $n = 100$]{
      \includegraphics[width = 0.5\textwidth]{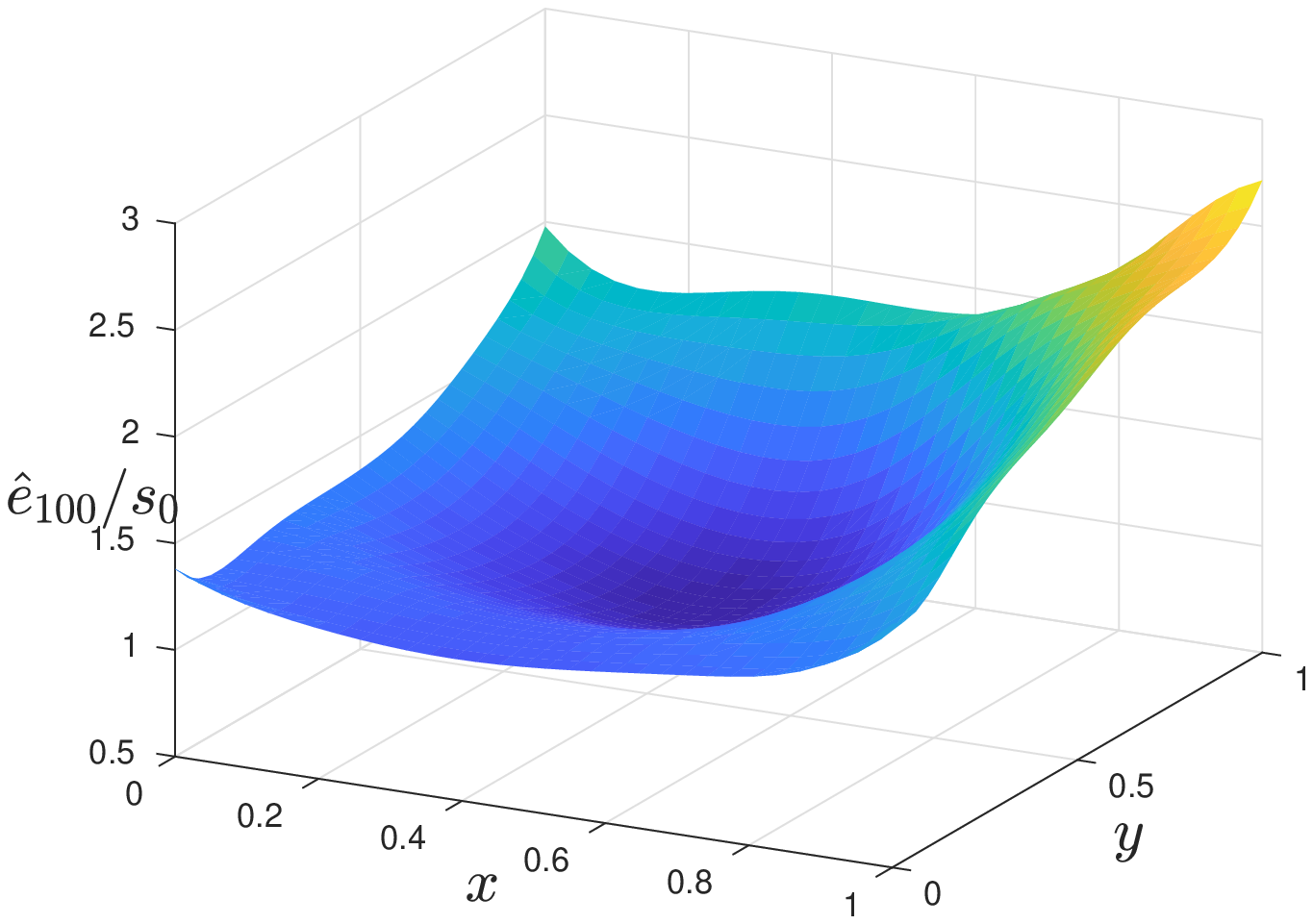}}
    \caption{Evolution of the surface $(x,y)\mapsto \hat e_n(x,y)$ with the iterations}
  \end{center}
\end{figure}
As expected, the surface has its maximal value at the point $x = L, y = L$.The line $x\mapsto \hat e_n(x,L)$ does not change much from $n=50$ to $n = 100$, because plants along this line are already close to their equilibrium size (with competition) at $n = 50$, whereas the line $x\mapsto \hat e_n(x,0)$ has not converged yet for $n = 50$, due to its slower growth rate. As $n$ tends towards infinity, we can expect the surface to be more and more invariant by translation along $y-$axis.

\section{Conclusion and perspectives}

Heterogeneous population models can be approximated by the MFL distribution when the population is large enough and when the interaction function describing the dynamics satisfies a set of assumptions. The phenomenon of chaos propagation, implied by Dobrushin stability, seems to provide an interesting research direction to circumvent the problem of fully-correlated individuals, that arises when the inference of the model is carried out. The suggested methodology for the simulation of the MFL distribution gives promising results, although a theoretical analysis of its consistency still needs to be conducted (on-going work). Our next step is to apply the MFL approximation to real experimental data, to study the impact of competition on the development of plants in mixed stands.

The MFL distribution is appealing because of the property of chaos propagation. But it is clear that MFL approximation might not be relevant for populations having relatively small sizes, as it can be expected when looking at figure \ref{simu_schneider1}. A limit seems to be reached for $N > 100$, as there are very few changes in the dynamics above this threshold. For smaller $N$ however, the trajectory of a single individual is noisy, and the approximation of the population distribution by a factorized distribution might be too rough. In general terms, the critical size of the population $N_c$ is a function of the tolerance $\epsilon$ on some metric quantifying the discrepancy between the \textit{microscopic} distribution and the MFL distribution, of the length $T$ of the time interval during which the system is observed and finally, of course, of the transition function $g$\footnote{An obvious example is given by a transition function $g$ having no dependency with respect to $X',\theta'$. In that case, the individuals defined by system \ref{system_micro} are in fact independent already, and the critical size is then $N_c = 1$ for all $\epsilon,T$}. The metric, over which a tolerance $\epsilon$ is defined, has to be chosen according to the objectives of the inference. For instance, if our aim is to compute the posterior distribution of some parameter of the model given a set of observations, then we need to find an estimate of the discrepancy between the result that would be obtained by direct inference and the result obtained under MFL approximation. This task seems rather unfeasible, as both of these distributions are either too difficult to compute or simulated with a procedure having a yet uncontrolled error. The upper bound provided by Dobrushin stability in inequality (\ref{ineq_dobrushin}) is far too rough to be used for the estimation of the critical size of the population $N_c$.

For some systems however, MFL approximation is without doubt relevant. This is the case, for instance, for systems studied in statistical physics, systems that are constituted by a number of particles near or beyond the Avogadro constant ($\approx 6\times 10^{23}$). Even in this \textit{favourable case}, the use of MFL approximation within a process of Bayesian inference is not well set yet. It would require the coupling of a numerical scheme, similar to the one presented in the previous section, with a time-consuming MH or MHWG algorithms. In machine learning and signal processing literature (\cite{Marnissi2016}), the distribution of the variational Bayes approximation is mainly chosen for its conjugation property with the prior distribution. This often leads to analytical posterior distributions of the parameters, and it may spare a lot of computation time. In our case, our choice is motivated by the behaviour of the dynamical system when it becomes infinite. There is no chance that applying Bayes rule in this context might lead to known or tractable posterior distributions. The solution to this issue may consist in a trade-off between traditional variational Bayes techniques, that are efficient but biased by convenience-motivated choices, and the simulation of the MFL distribution associated to the system, which may require a significant amount of computation time but which is asymptotically unbiased.

This paper has focused on a quite specific class of interaction models, namely the ones that can be decomposed into a sum of pairwise potentials. In the case of more realistic plant models, such decomposition cannot be obtained. The competition is not considered as being additive, and does not even have a closed-form expression in some cases. The necessay complexity of these models leads to the question of the derivation of MFL distribution associated to more generic dynamical systems. In our case, the velocity field $\G_N$ at the microscopic level has a linear dependency with respect to the population empirical measure $\mu[0,Z_N^0]$. The convergence of $\G_N$ towards a MFL velocity field may still be preserved when $\G_N$ is only continuous with respect to $\mu[0,Z_N^0]$ for some Wasserstein metric. Such theoretical study, in a more general setting than the one presented in this paper, may enable to study the asymptotic behaviour of more realistic plant population models, incorporating not only competition but also beneficial interactions, which constitute the main interest of mixed cropping.

\section{Proofs of the subsection 2.1}

\subsection{Proof of proposition 1}
\label{proof_prop1}
\begin{proof}
  Let us set $\forall i\in \llbracket 1;N\rrbracket,~R_i = \displaystyle \log\left(\frac{S_i}{s_m}\right)$. We use the notation $ \theta = (x,y,S,\gamma)\in \Theta = \R^2\times (\R_+^*)^2$. Let us consider the functions
  \begin{equation}
    \begin{aligned}
      &g_r:(r_1, \theta_1,r_2, \theta_2)\in \R\times \Theta\times \R\times \Theta \mapsto \gamma_1\left(\log\left(\frac{S_1}{s_m}\right)\left(1-C_r(r_1,r_2,|\vec x_1-\vec x_2|)\right)-r_1\right)\\
      &C_r(r_1,r_2,|\vec x_1-\vec x_2|) = \frac{r_2}{2R_M\displaystyle\left(1+\frac{|\vec{x}_1-\vec{x}_2|^2}{\sigma_x^2}\right)}\left(1+\tanh\left(\frac{r_2-r_1}{\sigma_r}\right)\right)
    \end{aligned}
    \label{definition_gr}
  \end{equation}
  We set $ \theta_{1:N} = (x_i,y_i,S_i,\gamma_i)_{1\leq i\leq N}$ and the function
  \begin{equation}
    G_r(\circ, \theta_{1:N}) :  r_{1:N} = (r_i)_{1\leq i\leq N}\in \R^N\mapsto \left(\frac{1}{N-1}\sum_{j\neq i}g_r(r_i, \theta_i,r_j, \theta_j)\right)_{1\leq i\leq N}\in \R^N
  \end{equation}
  For all $i,j\in \llbracket 1;N\rrbracket$ and $ r_{1:N}\in \R^N$, we have
  \begin{equation}
    \begin{aligned}
      &|g_r(r_i,\theta_i,r_j,\theta_j)| \leq \gamma_{1:N}^{\max}\left(R_{1:N}^{\max}\left(1+|C(r_i,r_j,|\vec x_i- \vec x_j|)|\right) + |r_i|\right)\\
      &\text{where }\gamma_{1:N}^{\max} = \max_{1\leq i\leq N}\gamma_i,~R_{1:N}^{\max} = \max_{1\leq i\leq N} |R_i|\\
      &|C(r_i,r_j,| \vec x_i- \vec x_j|)|\leq \frac{|r_j|}{R_M}\\
      &\text{so }|g_r(r_i,\theta_i,r_j,\theta_j)| \leq K(\theta_{1:N}) (1+|r_i|+|r_j|)\\
      &\text{with } K(\theta_{1:N}) = \max\left(\gamma^{\max}_{1:N}R_{1:N}^{\max}, \gamma_{1:N}^{\max}\frac{R_{1:N}^{\max}}{R_M},\gamma_{1:N}^{\max}\right)
    \end{aligned}
  \end{equation}
  We consider the following norm over $\R^N$, defined by $\forall r_{1:N}\in \R^N,~|r_{1:N}| = \displaystyle \sum_{i=1}^N |r_i|$, known as the norm 1. We have for all $r_{1:N}\in \R^N$
  \begin{equation}
    \begin{aligned}
      |G_r(r_{1:N}, \theta_{1:N})|&\leq \frac{K(\theta_{1:N})}{N-1}\sum_{i=1}^N\sum_{j\neq i}(1+|r_i|+|r_j|) \\
      &\leq NK(\theta_{1:N})\left(1+|r_{1:N}|\right)
    \end{aligned}
  \end{equation}
  This inequality, along with the fact $G_r(\circ,\theta_{1:N})$ is a locally Lipschitz continuous map, proves that the differential system
  \begin{equation}
    \left\{\begin{array}{l}
     r_{1:N}(0) = \displaystyle \left(\log\left(\frac{s_i^0}{s_m}\right)\right)_{1\leq i\leq N}\\
    \displaystyle\forall t\in \R_+,~ \frac{\intd  r_{1:N}(t)}{\intd t} = G_r( r_{1:N}(t),\theta_{1:N})
    \end{array}\right.
  \end{equation}
  has an unique global solution defined over $\R_+$. Then the function $t\in \R_+,~s_{1:N} : t\in \R_+\mapsto \left(s_m e^{r_i(t)}\right)_{1\leq i\leq N}$ is the unique solution of system (\ref{system_schneider}). 
\end{proof}

\subsection{Proof of proposition 2}
\label{proof_prop2}

\begin{proof}
  We set $\forall t\in \R_+,~Z_N(t, Z_N^0) = (s_i(t, Z_N^0),x_i,y_i,S_i,\gamma_i)_{1\leq i\leq N}$. We consider the random intervall
  \begin{equation}
    T[{Z_N^0}] = \left\{t\in \R_+|\forall \tau \in [0;t]~ Z_N(\tau,{Z_N^0})\in \mathring{\domainD}^N\right\}
  \end{equation}
  The intervall $T[{Z_N^0}]$ is almost surely an intervall not reduced to singleton $\{0\}$, as $Z_N(0, Z_N^0)$ is in $\mathring{\domainD}^N$ almost surely and as $t\in \R_+\mapsto Z_N(t, Z_N^0)$ is a continuous mapping. $t^*( Z_N^0) = \sup(T[{Z_N^0}])$ is therefore positive random variable almost surely. Let $\Omega^* = \{\omega\in \Omega|t^*( Z_N^0(\omega)) > 0\}$ which is such that $\proba(\Omega^*) = 1$. Let $\omega \in \Omega^*$. Then for all $t\in [0;t^*( Z_N^0(\omega))[$, we have for all $i\in \llbracket 1;N\rrbracket$
  \begin{equation}
    \begin{aligned}
      &s_m < s_i(t, Z_N^0) < S_i(\omega)\\
      & \forall j\in \llbracket 1;N\rrbracket,~C(s_i(t, Z_N^0(\omega)),s_j(t, Z_N^0(\omega)),| x_i(\omega)- x_j(\omega)|)\in [0;1]\\
      &s_i(t, Z_N^0(\omega)) = s_i^0(\omega) + \int_0^t\gamma_i(\omega)s_i(\tau, Z_N^0(\omega))\left(\log\left(\frac{S_i(\omega)}{s_m}\right)\right.\\
      &\left.\times \left(1-\frac{1}{N-1}\sum_{j\neq i}C(s_i(\tau, Z_N^0(\omega)),s_j(\tau, Z_N^0(\omega)),| x_i(\omega)- x_j(\omega)|)\right)-\log\left(\frac{s_i(\tau, Z_N^0(\omega))}{s_m}\right)\right)\intd \tau\\
      &\text{so } s_i(t, Z_N^0(\omega))\leq s_i^0(\omega)+\gamma_i(\omega)\int_0^ts_i(\tau, Z_N^0(\omega))\log\left(\frac{S_i(\omega)}{s_i(\tau, Z_N^0(\omega))}\right)\intd \tau\\
      &\text{and } s_i(t, Z_N^0(\omega))\geq s_i^0(\omega) -\gamma_i(\omega)\int_0^t s_i(\tau, Z_N^0(\omega))\log\left(\frac{s_i(\tau, Z_N^0(\omega)}{s_m}\right)\intd \tau
    \end{aligned}
  \end{equation}
  According to Grönwall lemma, the latest inequalities lead to
  \begin{equation}
    \begin{aligned}
      &s_i(t, Z_N^0(\omega)) \leq S_i(\omega) \exp\left(-e^{-\gamma_i(\omega)t}\log\left(\frac{S_i(\omega)}{s_i^0(\omega)}\right)\right) < S_i(\omega)\\
      &\text{and }s_i(t, Z_N^0(\omega)) \geq s_m\exp\left(-e^{-\gamma_i(\omega)t}\log\left(\frac{s_m}{s_i^0(\omega)}\right)\right) > s_m
    \end{aligned}
    \label{inequality21}
  \end{equation}
  If $t^*( Z_N^0(\omega)) < +\infty$, we can use inequalities (\ref{inequality21}) to obtain that $Z_N(t^*( Z_N^0(\omega)), Z_N^0(\omega))\in \mathring{\domainD}^N$. By continuity and by the fact that $\mathring{\domainD}^N$ is a non-empty open set, we can find $\epsilon(\omega) > 0$ such that $Z_N(t^*( Z_N^0(\omega))+\epsilon(\omega), Z_N^0(\omega))\in \mathring{\domainD}^N$, which is in contradiction with the definition of $t^*( Z_N^0(\omega))$. So $\forall \omega \in \Omega^*,~ t^*( Z_N^0(\omega)) = +\infty$.
\end{proof}

\section{Proof of the subsection \ref{sub_empirical_measure} : proposition 3}
\label{proof_prop3}
\begin{proof}
  We only have to check that the trajectory $t\in \R_+\mapsto \mu[t,Z_N^0]\in \probaspace_1(\Z)$ is continuous for the metric $W_1$. Let $\phi\in \C_L(\Z)$ a Lipschitz continuous function such that $\mathrm{Lip}(\phi)\leq 1$ and let $t_1,t_2\in \R_+$.
  \begin{equation*}
    \begin{aligned}
      &\left|\int_\Z\phi(z)\mu[t_1,Z_N^0](\intd z) - \int_\Z \phi(z)\mu[t_2,Z_N^0](\intd z) \right| = \left|\frac{1}{N}\sum_{i=1}^N\left(\phi(z_i(t_1,Z_N^0)-\phi(z_i(t_2,Z_N^0)\right)\right|\\
      &\leq \frac{1}{N}\sum_{i=1}^N\left|z_i(t_1,Z_N^0)-z_i(t_2,Z_N^0)\right|\\
      &\text{so }W_1(\mu[t_1,Z_N^0],\mu[t_2,Z_N^0]) \leq \frac{1}{N}\sum_{i=1}^N\left|z_i(t_1,Z_N^0)-z_i(t_2,Z_N^0)\right|
    \end{aligned}
  \end{equation*}
  It follows that $t\in \R_+\mapsto \mu[t,Z_N^0]$ is continuous for the metric $W_1$, by continuity of the solution of the system (\ref{system_micro}). The other recquirement for $t\in \R_+\mapsto \mu[t,Z_N^0]$ to be a measure solution is given by equation (\ref{weak_transport_micro}).
\end{proof}

\section{Proofs of the subsection \ref{sub_macro}}

\subsection{Proof of theorem \ref{thm_flow_X}}
\label{proof_thm1}

\begin{proof}{(\textit{of theorem 1})}
  Let us start by proving the local existence of functions satisfying the characteristic flow equation. Let $\alpha > 0$. We introduce the following functional space $\Y_\alpha = \C^0([-\alpha;\alpha]\rightarrow \Y)$ endowed with the functional metric $\displaystyle f\in \Y_\alpha\mapsto \|f\|_{\Y_\alpha} = \sup_{t\in [-\alpha;\alpha]}\|f(t,.)\|_\Y$. $\Y_\alpha$ is a Banach space for this metric. Over the functional space $\Y_\alpha$, we define the following map
  \begin{equation*}
    \begin{aligned}
      &\forall f\in \Y_\alpha,~\forall t\in [-\alpha;\alpha],~\forall (X,\theta)\in \Z,\\
      &\Phi_\alpha(f,t,X,\theta) = X + \int_0^t\int_{\Z}g(f(s,X,\theta),\theta,f(s,X',\theta'),\theta')\mu_0(\intd X',\intd\theta')\intd s \in \X
    \end{aligned}
  \end{equation*}
  For all $f\in \Y_\alpha$, $\Phi_\alpha(f,\cdot,\cdot)\in \Y_\alpha$. Let $R > 1$ and $\Y_{\alpha,R} = \{f\in \Y_\alpha|\|f\|_{\Y_\alpha}\leq R\}$. We have for all $f\in \Y_{\alpha,R}$, for all $(X,\theta)\in \Z$ and for all $t\in [-\alpha;\alpha]$
  \begin{equation*}
    \begin{aligned}
      &|\Phi_\alpha[f](t,X,\theta)|\leq |X| + K_1\int_0^t\int_{\Z}(1+|f(s,X,\theta)|+|f(s,z')|)\mu_0(\intd z')\intd s\\
      &|\Phi_\alpha[f](t,X,\theta)|\leq |X| + K_1\alpha\left(1+\|f\|_{\Y_\alpha}\left(2+|X|+|\theta| + \int_\Z |z'|\mu_0(\intd z')\right)\right)
    \end{aligned}
  \end{equation*}
  we set $\displaystyle M^1_{\mu_0} = \int_\Z |z'|\mu_0(\intd z')$the first order moment, then
  \begin{equation}
    \|\Phi_\alpha(f,\cdot,\cdot)\|_{\Y_\alpha} \leq 1+K_1\alpha(1+(2+M^1_{\mu_0})R)
    \label{inequality0}
  \end{equation}
  So if we choose $\alpha$ such that $1+K_1\alpha(1+(2+M^1_{\mu_0})R)\leq R$, i.e.
  \begin{equation}
    \alpha \leq \frac{R-1}{K_1(1+(2+M^1_{\mu_0})R)}
    \label{inequality1}
  \end{equation}
  we have that for all $f\in \Y_{\alpha,R}$, $\Phi_\alpha(f,\cdot,\cdot)\in \Y_{\alpha,R}$.\\
  Let $f_1,f_2\in \Y_{\alpha,R}$. We have for all $z = (X,\theta)\in \Z$ and for all $t\in [-\alpha;\alpha]$
  \begin{equation*}
    \begin{aligned}
      &|\Phi_\alpha(f_1,t,z)-\Phi_\alpha(f_2,t,z)|\leq\\
      &\int_0^t\int_{\Z}\left|g(f_1(s,z),\theta,f_1(s,X',\theta'),\theta') - g(f_2(s,z),\theta,f_2(s,X',\theta'),\theta')\right|\mu_0(\intd X',\intd \theta')\intd s\\
      &\leq K_2\int_0^t\int_{\Z}(1+|f_1(s,z')|+|f_2(s,z')|)(|f_1(s,z)-f_2(s,z)|+|f_1(s,z')-f_2(s,z')|)\mu_0(\intd z')\intd s\\
      &\leq K_2\alpha\|f_1-f_2\|_{\Y_\alpha}\int_\Z(1+2R(1+|z'|))(2+|z|+|z'|)\mu_0(\intd z')\\
    \end{aligned}
  \end{equation*}
  \begin{equation*}
    \begin{aligned}
      &\text{we set }M_{\mu_0}^2 = \int_\Z |z'|^2\mu_0(\intd z')\text{ then }\\
      &\|\Phi_\alpha(f_1,\cdot,\cdot)-\Phi_\alpha(f_2,\cdot,\cdot)\|_{\Y_\alpha}\leq K_2\alpha(2+4R+(1+6R)M_{\mu_0}^1 + 2RM_{\mu_0}^2)\|f_1-f_2\|_{\Y_\alpha}
    \end{aligned}
  \end{equation*}
  If $\alpha$ is chosen such that $K_2\alpha(2+4R+(1+6R)M_{\mu_0}^1 + 2RM_{\mu_0}^2) < 1$, and such that it satisfies the inequality (\ref{inequality1}), i.e.
  $$\displaystyle\alpha <  \min\left( \frac{R-1}{K_1(1+(2+M^1_{\mu_0})R)},\frac{1}{K_2(2+4R+(1+6R)M_{\mu_0}^1 + 2RM_{\mu_0}^2)}\right)$$
  then $\Phi_\alpha$ is a contractive map over $\Y_{\alpha,R}$. According to fixed-point theorem, there exists an unique $f_{\alpha,R}\in \Y_{\alpha,R}$ such that $\Phi_\alpha(f_{\alpha,R},\cdot,\cdot) = f_{\alpha,R}$, i.e. for all $t\in [-\alpha;\alpha]$ and $z = (X,\theta)\in \Z$, we have
  \begin{equation*}
    f_{\alpha,R}(t,z) = X + \int_0^t \int_{\Z}g(f_{\alpha,R}(s,z),\theta,f_{\alpha,R}(s,X',\theta'),\theta')\mu_0(\intd X',\intd\theta')\intd s
  \end{equation*}
  Let us prove now that any function satisfying the equation on a sub-interval of $[-\alpha;\alpha]$ is the restriction of the previous function $f_{\alpha,R}$ to this sub-interval. Without loss of generality, we work on sub-intervals of type $[-\beta;\beta]$ with $\beta \leq \alpha$.\\
  Let $f_\beta : [-\beta;\beta]\times \Z\rightarrow \X$ be such that
  \begin{equation*}
    \begin{aligned}
      &\forall t\in [-\beta;\beta],~\forall (X,\theta)\in \Z,~\int_\Z |g(f_\beta(t,X,\theta),\theta,f_\beta(t,X',\theta'),\theta')|\mu_0(\intd X',\intd \theta') <+\infty\\
      &\text{and }f_\beta(t,X,\theta) = X + \int_0^t\int_{\Z}g(f_\beta(s,X,\theta),\theta,f_\beta(s,X',\theta'),\theta')\mu_0(\intd X',\intd \theta')\intd s
    \end{aligned}
  \end{equation*}
  We can then distinguish two cases :
  \begin{enumerate}
  \item Either $\displaystyle \sup_{-\beta\leq t\leq \beta}\sup_{z\in \Z}\frac{|f_\beta(t,z)|}{1+|z|} \leq R$. Then, by following the same reasoning as previously, $f_\beta$ is the unique fixed point of the map $\Phi_\beta$ over the set $\Y_{\beta,R}$. Since the restriction of $f_{\alpha,R}$ to the interval $[-\beta;\beta]$ is also a fixed point of $\Phi_\beta$, then we have that $(f_{\alpha,R})_{[-\beta;\beta]} = f_\beta$.
  \item Either $\displaystyle \sup_{-\beta\leq t\leq \beta}\sup_{z\in \Z}\frac{|f_\beta(t,z)|}{1+|z|} > R$. Let us introduce the following time $\beta_R = \sup\{\delta\in [0;\beta]|\forall t\in [-\delta;\delta],~\|f_\beta(t,.)\|_\Y\leq R\}$. Then $\beta_R > 0$ necessarily, since $\|f_\beta(0,.)\|_\Y = 1 < R$. For all $\delta\in [0;\beta_R[$, we have, by deriving the same inequalities as in (\ref{inequality0}),
    \begin{equation}
      \|f_\beta(\delta,.)\|_\Y\leq 1+K_1\beta_R(1+(2+M^1_{\mu_0})R)\leq R
      \label{inequality3}
    \end{equation}
    By continuity, the previous inequality is also valid for $\delta = \beta_R$. Since $\displaystyle \sup_{-\beta\leq t\leq \beta}\sup_{z\in \Z}\frac{|f_\beta(t,z)|}{1+|z|} > R$, we have that $\beta_R < \beta$. By reinjecting this inequality in (\ref{inequality3}), we have in fact in that
    \begin{equation*}
      \max\left(\|f_\beta(-\beta_R,.)\|_\Y,\|f_\beta(\beta_R,.)\|_\Y\right)\leq 1+K_1\beta_R(1+(2+M^1_{\mu_0})R)< R
    \end{equation*}
    which is in contradiction with the definition of $\beta_R$. So the current case 2 is absurd.
  \end{enumerate}
  We can extend the following reasoning to any interval of $\R$ containing 0. We define the following set of tuples

  \begin{equation}
    \begin{aligned}
      &\setS_{0,\mu_0} = \left\{(J,f_J)|J\text{ is an interval of }\R\text{ containing }0,~f_J\in C^0(J\rightarrow \Y)\text{ such that }\right.\\
      &\left.\forall t\in J,~\forall (X,\theta)\in \Z,~f_J(t,X,\theta) =  X + \int_0^t\int_{\Z}g(f_J(s,X,\theta),\theta,f_J(s,X',\theta'),\theta')\mu_0(\intd X',\intd \theta')\intd s\right\}
    \end{aligned}
  \end{equation}
  This set is non-empty as it contains at least $([-\alpha,\alpha],f_{\alpha,R})$ and all its restriction to sub-intervals. $\setS_{0,\mu_0}$ is partially ordered by the following relationship
  \begin{equation*}
    \forall (J_1,f_{J_1}), (J_2,f_{J_2})\in \setS_{0,\mu_0},~(J_1,f_{J_1})\prec (J_2,f_{J_2})\Leftrightarrow J_1\subsetneq J_2
  \end{equation*}
  Let us consider the set $\bar{\setS}_{0,\mu_0}$ of maximal elements of $\setS_{0,\mu_0}$, i.e.
  \begin{equation*}
    \bar{\setS}_{0,\mu_0} = \{(J,f_J)\in \setS_{0,\mu_0}|\nexists (J',f_{J'})\in \setS_{0,\mu_0},~(J,f_J)\prec (J',f_{J'})\}
  \end{equation*}
  We prove now that the set of maximal elements $\bar{\setS}_{0,\mu_0}$ is reduced to a singleton.\\
  Let $(J_1,f_{J_1}),(J_2,f_{J_2})$ be two maximal elements of $\bar{\setS}_{0,\mu_0}$. We consider $J = J_1\cap J_2$ and $T_+ = \{t\in J|t\geq 0,\forall s\in [0;t],~\forall z\in \Z,~f_{J_1}(s,z) = f_{J_2}(s,z)\}$. Let us assume by contradiction that $T_+ \neq J\cap\R_+$. If $t^* = \sup(T_+)$, we can exclude two cases :
  \begin{enumerate}
  \item If $t^* = +\infty$, then $T_+ = \R_+$, so $\R_+\subset J\cap\R_+$, leading to $J = \R_+ = T_+$, which is a contradiction. So $t^*$ must be finite, $t^*< +\infty$.
  \item If $t^*\in \partial J$, i.e. the boundary of interval $J$, then $t^* = \sup(J)$, and therefore $T_+\cap \R_+ = J\cap \R_+$, which is a contradiction. So under our assumptions, $t^*$ must be in the interior of the interval $J$.
  \end{enumerate}
  For all $t\in [0;t^*)$, we have $\|f_{J_1}(t,.)-f_{J_2}(t,.)\|_{\Y} = 0$, so by continuity $\|f_{J_1}(t^*,.)-f_{J_2}(t^*,.)\|_{\Y} = 0$. Let $\delta > 0$ such that $t^* + \delta \in J$ and such that $\forall t\in [t^*;t^*+\delta]$, $\max(\|f_{J_1}(t,.)\|_\Y,\|f_{J_2}(t,.)\|_\Y)\leq R^* = \|f_{J_1}(t^*,.)\|_\Y + 1$.\\
    Let $z = (X,\theta)$ and $t\in [t^*;t^*+\delta]$
    \begin{equation*}
      \begin{aligned}
        &f_{J_1}(t,z) - f_{J_2}(t,z) = \\
        &\int_{t^*}^t\int_{\Z}\left(g(f_{J_1}(s,z),\theta,f_{J_1}(s,X',\theta'),\theta')-g(f_{J_2}(s,z),\theta,f_{J_2}(s,X',\theta'),\theta')\right)\mu_0(\intd X',\intd \theta')\intd s\\
        &\|f_{J_1}(t,.) - f_{J_2}(t,.)\|_\Y\leq K_2(2+4R^*+(1+6R^*)M_{\mu_0}^1 + 2R^*M_{\mu_0}^2)\int_{t^*}^t\|f_{J_1}(s,.)-f_{J_2}(s,.)\|_\Y\intd s
      \end{aligned}
    \end{equation*}
    The last inequality implies that for all $t\in [t^*;t^*+\delta]$, $\|f_{J_1}(t,.)-f_{J_2}(t,.)\|_\Y = 0$ by Grönwall lemma, which is a contradiction with the definition of $t^*$. So we have necessarily that $T_+ = J\cap\R_+$. We can conduct the same reasoning to prove that $T_- = \{t\in J|t\leq 0,~\forall s \in [t;0],~ f_{J_1}(s,.) = f_{J_2}(s,.)\}$ is equal to $J\cap\R_-$. So the functions $f_{J_1}$ and $f_{J_2}$ coincide on $J = J_1\cap J_2$. If $J_1\cap J_2 \subsetneq J_1\cup J_2$, we could construct the following function
    \begin{equation*}
      \forall t \in J_1\cup J_2,~ f_{J_1\cup J_2}(t,.) = \left\{\begin{array}{l}
      f_{J_1}(t,.)\text{ if }t\in J_1\\
      f_{J_2}(t,.)\text{ if }t\in J_2
      \end{array}\right.
    \end{equation*}
    Then we would have that $(J_1\cup J_2,f_{J_1\cup J_2})\in \setS_{0,\mu_0}$ and that $(J_1,f_{J_1})\prec (J_1\cup J_2,f_{J_1\cup J_2})$ and $(J_2,f_{J_2})\prec (J_1\cup J_2,f_{J_1\cup J_2})$, which would be in contradiction with the maximality of $(J_1,f_{J_1}),(J_2,f_{J_2})$. So $J_1 = J_2$ and $f_{J_1} = f_{J_2}$, and $\bar{\setS}_{0,\mu_0}$ is reduced to a singleton.\\
    Let us prove now that the unique maximal element $(J,f_J)$ is in fact defined over $\R_+$, i.e. $\R_+\subset J$. We consider $t^* = \sup(J)$. Let us assume by contradiction that $t^*<+\infty$. Then we have necessarily that $t^*\notin J$. Otherwise, we could apply the same reasoning as for the local existence in the beginning of the proof, to the initial time $t^*$ and to the initial distribution $\mu_{t^*}$ the probability distribution of $(f_J(t^*,{z_0}),{\theta_0})$ where ${z_0} = ({X_0},{\theta_0})$ is a random variable of distribution $\mu_0$. So we would be able to extend the interval of definition $J$, which would be in contradiction with the maximality of $(J,f_J)$.\\
    Let $t\in [0;t^*[$ and $(X,\theta)\in \Z$, we have
    \begin{equation*}
      \begin{aligned}
        &f_J(t,X,\theta) = X + \int_0^t\int_{\Z} g(f_J(s,X,\theta),\theta,f_J(s,X',\theta'),\theta')\mu_0(\intd X',\intd \theta')\intd s\\
        &\|f_J(t,.)\|_\Y \leq 1+K_1\int_0^t(1+(2+M^1_{\mu_0})\|f_J(s,.)\|_\Y)\intd s\\
        &\text{so by Grönwall lemma }\\
        &\|f_J(t,.)\|_\Y \leq \frac{1}{2(1+M^1_{\mu_0})}((3+2M^1_{\mu_0})\exp(2K_1(1+M^1_{\mu_0})t)-1) = M(t)
      \end{aligned}
    \end{equation*}
    We use the last inequality to show that the derivative $\displaystyle t\in [0;t^*[\mapsto \frac{\partial f_J}{\partial t}(t,z)$ is bounded for all $z\in \Z$. Let $t\in [0;t^*[$, $(X,\theta)\in \Z$ 
    \begin{equation*}
      \begin{aligned}
        &\frac{\partial f_J}{\partial t}(t,X,\theta) = \int_{\Z} g(f_J(t,X,\theta),\theta,f_J(t,X',\theta'),\theta')\mu_0(\intd X',\intd \theta')\\
        &\left|\frac{\partial f_J}{\partial t}(t,X,\theta)\right|\leq K_1(1+M(t^*)(2+|X|+|\theta|+M^1_{\mu_0}))
      \end{aligned}
    \end{equation*}
    So  $\displaystyle \lim_{t\rightarrow t^*}\int_0^{t}\left|\frac{\partial f_J}{\partial t}(s,z)\right|\intd s$ is finite, and $\displaystyle \lim_{t\rightarrow t^*} f_J(t,z) = X + \lim_{t\rightarrow t^*}\int_0^t\frac{\partial f_J}{\partial t}(s,z)\intd s$ exists. Then we can define the following function
    \begin{equation*}
      \forall t \in J\cup\{t^*\},~\forall z \in \Z,~f_{J\cup\{t^*\}}(t,z) = \left\{\begin{array}{l}
      f_J(t,z)\text{ if }t\in J\\
      \displaystyle  X + \lim_{t\rightarrow t^*}\int_0^t\frac{\partial f_J}{\partial t}(s,z)\intd s\text{ if }t = t^*
      \end{array}\right.
    \end{equation*}
    We have then that $(J\cup\{t^*\},f_{J\cup\{t^*\}})\in \setS_{0,\mu_0}$ and that $(J,f_J)\prec (J\cup\{t^*\},f_{J\cup\{t^*\}})$, which is in contradiction with the maximality of $(J,f_J)$. So $t^* = +\infty$ and the maximal element is defined over $\R_+$. 
\end{proof}

\subsection{proof of corollary 1}
\label{proof_thm2}

\begin{lemma}
  Let $\mu_0\in \probaspace_2(\Z)$, $g$ satisfying assumptions (A1) and (A2), $z^0 = (X^0,\theta)$ a random variable of distribution $\mu_0$ and $X_\infty : \R_+\times \Z\rightarrow \X$ the flow solution of equation (\ref{equ_flow_X}). For all time $t\in \R_+$, we denote by $\mu[t]$ the distribution of the random variable $z^t = (X_\infty(t,z^0),\theta)$. Then $\mu[t]$ is a measure solution of the transport equation (\ref{weak_transport_macro}).
\end{lemma}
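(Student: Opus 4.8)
The plan is to verify the two requirements of Definition~\ref{measure_solution_def} for the trajectory $t\mapsto\mu[t]$, which by construction is the pushforward of $\mu_0$ under the map $(X,\theta)\mapsto(X_\infty(t,X,\theta),\theta)$; this map is continuous in $(X,\theta)$ by Lemma~1 and Theorem~\ref{thm_flow_X}, so the pushforward is well defined. The whole argument runs on the change-of-variables identity: for any bounded measurable $\phi:\Z\rightarrow\R$,
\[
\int_\Z\phi(z)\,\mu[t](\intd z)=\int_\Z\phi(X_\infty(t,X,\theta),\theta)\,\mu_0(\intd X,\intd\theta).
\]
Applied to $\phi=g(z,\cdot)$ with $z$ fixed, it rewrites the defining equation (\ref{equ_flow_X}) as $\partial_tX_\infty(t,X,\theta)=\G(\mu[t],(X_\infty(t,X,\theta),\theta))$ with $\G$ the macroscopic velocity field (\ref{velocity_g}); applied to a test function it transports the target identity (\ref{weak_transport_macro}) back to a statement about the fixed measure $\mu_0$. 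Note that assumption (A3) is not needed here: it enters only in the uniqueness half of Corollary~1, and $\mu_0\in\probaspace_1(\Z)$ would actually suffice for the present lemma.

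For the weak formulation, I would fix $\varphi\in\C^1_0(\R_+\times\Z\rightarrow\R)$ and set $\psi(t,X,\theta)=\varphi(t,X_\infty(t,X,\theta),\theta)$. Since $t\mapsto X_\infty(t,X,\theta)$ is $C^1$ (Theorem~\ref{thm_flow_X}, point 2) and $\varphi$ is $C^1$, the chain rule gives, writing $z_t=(X_\infty(t,X,\theta),\theta)$,
\[
\partial_t\psi(t,X,\theta)=\frac{\partial\varphi}{\partial t}(t,z_t)+\frac{\partial\varphi}{\partial X}(t,z_t)^\T\,\partial_tX_\infty(t,X,\theta),
\]
and $\partial_tX_\infty(t,X,\theta)=\G(\mu[t],z_t)$ by the reformulated characteristic equation. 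Integrating in $\tau$ over $[0;t]$, then integrating against $\mu_0$, and finally applying the change-of-variables identity on both sides, one recovers exactly the identity (\ref{weak_transport_macro}) — provided one may exchange $\int_0^t\intd\tau$ and $\int_\Z\mu_0(\intd X,\intd\theta)$.

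That exchange, via Fubini, is the main technical obstacle and requires a uniform-in-time moment control. The integrand $\frac{\partial\varphi}{\partial t}(\tau,z_\tau)+\frac{\partial\varphi}{\partial X}(\tau,z_\tau)^\T\G(\mu[\tau],z_\tau)$ has a bounded first summand (definition of $\C^1_0$), and by assumption (A1) its second summand obeys $|\G(\mu[\tau],z_\tau)|\leq K_1\big(1+|X_\infty(\tau,X,\theta)|+\int_\Z|X'|\,\mu[\tau](\intd X',\intd\theta')\big)$. Both $|X_\infty(\tau,X,\theta)|$ and the moment $\int_\Z|z'|\,\mu[\tau](\intd z')$ are dominated using the Grönwall bound $\|X_\infty(s,\cdot)\|_\Y\leq M(s)$ established inside the proof of Theorem~\ref{thm_flow_X}, namely $|X_\infty(s,X,\theta)|\leq M(s)(1+|X|+|\theta|)$ with $\sup_{s\in[0;t]}M(s)<+\infty$; since $\mu_0$ has a finite first moment, the resulting majorant is integrable for $\mathrm{d}\tau\otimes\mu_0$ on $[0;t]\times\Z$, so Fubini applies. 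The same estimate handles $W_1$-continuity: pushing $\mu_0$ forward by $(X,\theta)\mapsto(z_{t_1},z_{t_2})$ yields a coupling of $\mu[t_1]$ and $\mu[t_2]$ with equal $\theta$-components, whence by Definition~\ref{def_wasserstein}
\[
W_1(\mu[t_1],\mu[t_2])\leq\int_\Z|X_\infty(t_1,X,\theta)-X_\infty(t_2,X,\theta)|\,\mu_0(\intd X,\intd\theta),
\]
and the integrand tends to $0$ pointwise by continuity of the flow while being dominated on any compact time interval by $2\big(\sup_sM(s)\big)(1+|X|+|\theta|)\in L^1(\mu_0)$; dominated convergence then gives continuity of $t\mapsto\mu[t]$ for $W_1$. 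In short, the only delicate point is the uniform moment bookkeeping needed for Fubini and for the dominating function; the rest is the routine method-of-characteristics computation.
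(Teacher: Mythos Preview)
Your proposal is correct and follows essentially the same route as the paper: pushforward change of variables, chain rule along the characteristic flow, and a moment bound inherited from Theorem~\ref{thm_flow_X}. The only cosmetic differences are that the paper differentiates $t\mapsto\int_\Z\varphi(t,z)\,\mu[t](\intd z)$ directly rather than integrating then swapping via Fubini, and for $W_1$-continuity it uses the dual (Kantorovich) form to get the cleaner bound $W_1(\mu[t_1],\mu[t_2])\leq(1+M^1_{\mu_0})\|X_\infty(t_1,\cdot)-X_\infty(t_2,\cdot)\|_\Y$, invoking the $\Y$-norm continuity of the flow instead of dominated convergence.
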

\begin{proof}
  Let $\phi \in \C_L(\Z)$ be a Lipschitz continuous function such that $\mathrm{Lip}(\phi)\leq 1$, and $t_1,t_2\in \R_+$.
  \begin{equation*}
    \begin{aligned}
      &\int_\Z \phi(z) \mu[t_1](\intd z) = \expect_{\mu[t_1]}(\phi(z^{t_1})) = \expect_{\mu_0}(\phi(X_\infty(t_1,z^0),\theta)) = \int_\Z \phi(X_\infty(t_1,X,\theta),\theta)\mu_0(\intd X,\intd \theta)\\
      &\left|\int_\Z \phi(z)\mu[t_1](\intd z) - \int_\Z \phi(z)\mu[t_2](\intd z)\right|\\
      &\leq \int_\Z |\phi(X_\infty(t_1,X,\theta),\theta)-\phi(X_\infty(t_2,X,\theta),\theta)|\mu_0(\intd X,\intd \theta)\\
      &\leq \int_\Z |X_\infty(t_1,X,\theta)-X_\infty(t_2,X,\theta)|\mu_0(\intd X,\intd \theta)\\
      &W_1(\mu[t_1],\mu[t_2])\leq \left(1+\int_\Z |z|\mu_0(\intd z)\right)\|X_\infty(t_1,.)-X_\infty(t_2,.)\|_\Y
    \end{aligned}
  \end{equation*}
  The continuity of $t\in\R_+\mapsto \mu[t]\in \probaspace_1(\Z)$ for the metric $W_1$ is therefore implied by the continuity of $t\in \R_+\mapsto X_\infty(t,.)\in \Y$ for the metric $\|.\|_\Y$.
  Let $\varphi$ be a test function. For the initial time $t= 0$, $\mu[0]$ is the distribution of $(X_\infty(0,z^0),\theta) = (X^0,\theta)$, which is $\mu_0$ by definition. So we have
  \begin{equation*}
    \int_\Z \varphi(0,z)\mu[0](\intd z) = \int_\Z \varphi(0,z)\mu_0(\intd z)
  \end{equation*}
   $t\in \R_+\mapsto \displaystyle \int_\Z \varphi(t,z)\mu[t](\intd z)$ is continuously differentiable and 
  \begin{equation*}
    \begin{aligned}
      &\frac{\intd}{\intd t}\int_\Z \varphi(t,z)\mu[t](\intd z) = \int_\Z\left(\frac{\partial \varphi}{\partial t}(t,X_\infty(t,X,\theta),\theta)\right.\\
      &\left.+ \frac{\partial \varphi}{\partial X}(t,X_\infty(t,X,\theta),\theta)^\T\int_\Z g(X_\infty(t,X,\theta),\theta,X_\infty(t,X,\theta'),\theta')\mu_0(\intd X',\intd \theta')\right)\mu_0(\intd X,\intd \theta)\\
      & = \int_\Z\left(\frac{\partial \varphi}{\partial t}(t,z) + \frac{\partial \varphi}{\partial X}(t,z)^\T\int_\Z g(z,z')\mu[t](\intd z')\right)\mu[t](\intd z)\\
    \end{aligned}
  \end{equation*}
  
\end{proof}

Before proving that this measure-solution is in fact the unique one, we need to establish some auxiliary results.

\begin{lemma}
  \label{lemma_flow_fix_measure}
  Let $\mu_0\in \probaspace_2(\Z)$ and $g$ satisfying assumptions (A1) and (A2). Let $t\in \R_+\mapsto \nu[t]\in \probaspace_1(\Z)$ be a trajectory in the space of probability measures continuous for the metric $W_1$. Then there exists an unique flow to the (ordinary) differential equation
  \begin{equation}
    \forall t_0\in \R_+,~\forall (X,\theta)\in \Z,~\left\{\begin{array}{l}
    X^\nu(t_0,t_0,X,\theta) = X\\
    \forall t\in \R_+,~\displaystyle \frac{\partial X^\nu}{\partial t}(t,t_0,X,\theta) = \G(\nu[t],X^\nu(t,t_0,X,\theta),\theta)
    \end{array}\right.
    \label{flow_nu}
  \end{equation}
  where $\G$ is non-local velocity field defined in equation (\ref{velocity_g}).
\end{lemma}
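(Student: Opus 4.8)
The essential observation is that, unlike in Theorem \ref{thm_flow_X}, the unknown $X^\nu$ enters equation (\ref{flow_nu}) only through the first argument of $g$ and \emph{not} inside the integral against the fixed, known measure $\nu[t]$. Consequently, for each $t_0\in\R_+$ and each $(X,\theta)\in\Z$, equation (\ref{flow_nu}) is a genuine non-autonomous ordinary differential equation in $\X$,
$$\frac{\intd Y}{\intd t}(t)=v_\theta(t,Y(t)),\qquad Y(t_0)=X,\qquad v_\theta(t,Y):=\G(\nu[t],Y,\theta)=\int_\Z g(Y,\theta,X',\theta')\,\nu[t](\intd X',\intd\theta'),$$
and the whole proof is a simpler special case of the argument in appendix \ref{proof_thm1}. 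The plan is to check that $v_\theta$ is a well-defined velocity field, globally Lipschitz in $Y$ with a locally bounded constant and of at most linear growth in $Y$, and then to conclude by the classical Cauchy--Lipschitz theory.

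The regularity of $v_\theta$ is obtained as follows. Well-posedness of the integral and linear growth follow from (A1): $|v_\theta(t,Y)|\le K_1(1+|Y|+M^1_{\nu[t]})$, where $M^1_{\nu[t]}:=\int_\Z|z'|\,\nu[t](\intd z')$ is finite because $\nu[t]\in\probaspace_1(\Z)$; moreover $t\mapsto M^1_{\nu[t]}$ is continuous, hence locally bounded, since $z\mapsto|z|$ is $1$-Lipschitz and therefore $|M^1_{\nu[t]}-M^1_{\nu[s]}|\le W_1(\nu[t],\nu[s])$ by Kantorovich duality. The Lipschitz estimate follows from (A2) applied with identical third arguments, $|g(Y_1,\theta,X',\theta')-g(Y_2,\theta,X',\theta')|\le K_2(1+2|X'|)|Y_1-Y_2|$, which after integration against $\nu[t]$ yields the \emph{global} bound $|v_\theta(t,Y_1)-v_\theta(t,Y_2)|\le K_2(1+2M^1_{\nu[t]})|Y_1-Y_2|$ with locally bounded $t$-coefficient. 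Finally, continuity of $(t,Y)\mapsto v_\theta(t,Y)$ follows from the continuity of $g$ together with the fact that $W_1$-convergence of $t\mapsto\nu[t]$ amounts to weak convergence with convergence of first-order moments, which passes to integrals of functions that are continuous with at most linear growth (if one prefers not to assume $g$ continuous, mere Borel measurability of $g$ already gives a Carathéodory velocity field, which is enough below).

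With these properties, the conclusion follows essentially verbatim from the Cauchy--Lipschitz/Picard argument already carried out for Theorem \ref{thm_flow_X}: for every $(t_0,X,\theta)$ the displayed initial-value problem has a unique maximal solution $t\mapsto X^\nu(t,t_0,X,\theta)$, and the linear growth bound combined with Grönwall's lemma rules out blow-up in finite time, in either time direction, so this maximal solution is defined on all of $\R_+$. Collecting these solutions over all $(t_0,X,\theta)$ defines the flow $(t,t_0,X,\theta)\mapsto X^\nu(t,t_0,X,\theta)$ of the statement, and its uniqueness is exactly the uniqueness of each underlying solution.

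The only step that is not completely routine is the dependence of $v_\theta$ on the time variable, i.e. the continuity (or at least measurability) of $t\mapsto\int_\Z g(Y,\theta,z')\,\nu[t](\intd z')$; this is where the $W_1$-continuity of $t\mapsto\nu[t]$ is genuinely used, paired with the at-most-linear growth of $g$ provided by (A1). Everything else is the textbook construction of a flow for a linearly growing, locally Lipschitz ODE, and is a simplified copy of appendix \ref{proof_thm1}.
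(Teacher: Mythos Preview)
Your proposal is correct and follows essentially the same route as the paper: derive from (A2) the global Lipschitz bound $|\G(\nu[t],Y_1,\theta)-\G(\nu[t],Y_2,\theta)|\le K_2(1+2M^1_{\nu[t]})|Y_1-Y_2|$ with a locally bounded $t$-coefficient, and conclude by Cauchy--Lipschitz. You supply more detail than the paper does---in particular your justification, via Kantorovich duality, that $t\mapsto M^1_{\nu[t]}$ is continuous (hence $\max_{[0,T]}M^1_{\nu[t]}<\infty$), and your discussion of the time-regularity of $v_\theta$---whereas the paper simply asserts global Lipschitz continuity on $[0,T]$ and invokes Cauchy--Lipschitz in one line.
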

\begin{proof}
  Let $T >0$, $t\in [0;T]$, $z_1,z_2\in \Z$
  \begin{equation*}
    \begin{aligned}
      &|\G(\nu[t],z_1)-\G(\nu[t],z_2)|\leq \int_\Z|g(z_1,z)-g(z_2,z)|\nu[t](\intd z)\\
      &\leq K_2\left(1+2\int_\Z |z|\nu[t](\intd z)\right)|z_1-z_2|\\
      &\leq K_2\left(1+2\max_{0\leq t\leq T}\int_\Z |z|\nu[t](\intd z)\right)|z_1-z_2|
    \end{aligned}
  \end{equation*}
  It follows that for all $\theta \in \Theta$, the map $(t,X)\in \R_+\times \X\mapsto \G(\nu[t],X,\theta)$ is globally Lipschitz continuous over the intervall $[0;T]$, for any $T >0$. The proof is concluded by Cauchy-Lipschitz theorem. 
\end{proof}

The two following lemmas are classical results from dynamical systems theory and transport equations.

\begin{lemma}(\cite{Golse2013}, theorem 2.2.3)
  \label{lemma_regularity_flow}
  Let $a:(t,X)\in \R_+\times \X\mapsto a(t,x)\in \X$ such that $a\in C(\R_+\times \X\rightarrow \X)$ and $\displaystyle  (t,X)\mapsto \frac{\partial a}{\partial X}(t,X)$ is defined and continuous over $\R_+\times \X$. We assume that there exists $K > 0$ such that for all $t\in \R_+$  and for all $X\in \X$, $|a(t,X)|\leq K(1+|X|)$, and we consider the flow associated to the differential equation
  \begin{equation}
    \forall t_0\in \R_+~\left\{\begin{array}{l}
      \forall X\in \X,~X^a(t_0,t_0,X) = X\\
      \displaystyle \forall t\in [0;T],~\forall X\in \X,~\frac{\partial X^a}{\partial t}(t,t_0,X) = a(t,X^a(t,t_0,X))
      \end{array}\right.
      \label{eq_transport_a}
  \end{equation}
  Then the flow $X^a$ is continuously differentiable with respect to its three arguments, i.e. $X^a\in \C^1(\R_+\times \R_+\times \X\rightarrow \X)$.
\end{lemma}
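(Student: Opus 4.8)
The plan is to follow the classical proof of smooth dependence of ODE flows on initial data and on the initial time, adapted to the time-dependent vector field $a$. First I would record the preliminaries supplied by the hypotheses: the continuity of $a$ together with the continuity of $\partial a/\partial X$ makes $a$ locally Lipschitz in $X$, uniformly on compact time intervals, so by the Cauchy--Lipschitz theorem there is a unique maximal solution of (\ref{eq_transport_a}) for each $(t_0,X)$; the sublinear bound $|a(t,X)|\le K(1+|X|)$ and a Grönwall argument exclude blow-up in finite time, hence $X^a$ is defined on all of $\R_+\times\R_+\times\X$ and satisfies the a priori estimate $|X^a(t,t_0,X)|\le (1+|X|)e^{K|t-t_0|}$; standard continuous-dependence arguments (again via Grönwall) give joint continuity of $(t,t_0,X)\mapsto X^a(t,t_0,X)$. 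Differentiability in $t$ is then immediate from the integral form $X^a(t,t_0,X)=X+\int_{t_0}^t a(s,X^a(s,t_0,X))\,\intd s$: the integrand is continuous, so $\partial_t X^a(t,t_0,X)=a(t,X^a(t,t_0,X))$, which is jointly continuous.

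The core step is differentiability in the initial position $X$. I would introduce the candidate Jacobian $J(t,t_0,X)$ as the solution of the linear variational equation $\partial_t J=\frac{\partial a}{\partial X}(t,X^a(t,t_0,X))\,J$ with $J(t_0,t_0,X)=\mathrm{Id}$, which is well posed since its matrix coefficient is continuous and locally bounded. To show $J=\partial_X X^a$, fix $[0;T]$ and a bounded set of initial data; for a small increment $h$ set $w_h(t)=X^a(t,t_0,X+h)-X^a(t,t_0,X)-J(t,t_0,X)h$. Subtracting integral equations and inserting $\pm\frac{\partial a}{\partial X}(s,X^a(s,t_0,X))\,[X^a(s,t_0,X+h)-X^a(s,t_0,X)]$, a first-order Taylor expansion with integral remainder yields $|w_h(t)|\le C\int_{t_0}^t|w_h(s)|\,\intd s+\eta(|h|)\,|h|$, where $\eta(|h|)\to 0$ as $|h|\to 0$ by the uniform continuity of $\partial a/\partial X$ on compacts together with $|X^a(\cdot,t_0,X+h)-X^a(\cdot,t_0,X)|=O(|h|)$. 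Grönwall then gives $|w_h(t)|=o(|h|)$ uniformly, so $\partial_X X^a=J$, and continuity of $J$ in $(t,t_0,X)$ follows from continuous dependence of the variational ODE on its coefficients.

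For the dependence on $t_0$ I would either differentiate the group identity $X^a(t,t_0,X)=X^a(t,s,X^a(s,t_0,X))$ at $s=t_0$, or run a difference-quotient/Grönwall argument as above, obtaining $\partial_{t_0}X^a(t,t_0,X)=-\,\partial_X X^a(t,t_0,X)\cdot a(t_0,X)$ (consistent with $\partial_{t_0}X^a\big|_{t=t_0}=-a(t_0,X)$ from $X^a(t_0,t_0,X)=X$). Continuity of the right-hand side in $(t,t_0,X)$ is then clear from the continuity of $\partial_X X^a$, of $a$, and of $X^a$ already established. Collecting the three cases shows that all first-order partials of $X^a$ exist and are jointly continuous, i.e. $X^a\in\C^1(\R_+\times\R_+\times\X\rightarrow\X)$.

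\textbf{Main obstacle.} The delicate point is the rigorous identification of the solution $J$ of the variational equation with $\partial_X X^a$, i.e. the control of $w_h$: this is exactly where the $\C^1$ hypothesis on $a$ is used, through the uniform continuity of $\partial a/\partial X$ on compact sets, without which the remainder term $\eta(|h|)|h|$ cannot be shown to be $o(|h|)$. Everything else is a matter of chaining Grönwall estimates; in particular the $t_0$-regularity is cleanest when deduced from the already-proved $X$- and $t$-regularity via the flow property rather than handled from scratch.
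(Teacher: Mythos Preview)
Your proposal is correct and is the standard classical argument for $\C^1$-regularity of ODE flows. Note, however, that the paper does not supply its own proof of this lemma: it is simply stated as a classical result and attributed to \cite{Golse2013}, theorem 2.2.3 (the paper writes ``The two following lemmas are classical results from dynamical systems theory and transport equations''). Your outline --- global existence via the sublinear bound and Gr\"onwall, differentiability in $X$ via the variational equation and a Gr\"onwall-controlled remainder, and differentiability in $t_0$ via the flow identity --- is precisely the argument one finds in the cited reference, so there is no discrepancy to report.
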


\begin{lemma}(\cite{Golse2013}, theorem 2.2.4)
  \label{lemma_linear_transport}
  Let $\varphi_0\in C^1(\X\rightarrow\R)$ and $(t,X)\in \R_+\times \X\mapsto a(t,X)\in \X$ be such that $a\in C(\R_+\times \X\rightarrow \X)$ and $\displaystyle \frac{\partial a}{\partial X}\in C(\R_+\times \X\rightarrow \M_{d_\X\times d_\X}(\R))$. We assume that for some $T > 0$ there exists $K > 0$ such that for all $t\in [0;T]$, $|a(t,X)|\leq K(1+|X|)$. Then there exists an unique solution $\varphi\in C^1([0;T]\times \X\rightarrow \R)$ to the partial differential equation
  \begin{equation*}
    \forall X\in \X,~\left\{\begin{array}{l}
    \varphi(0,X) = \varphi_0(X)\\
    \forall t\in [0;T],~\displaystyle \frac{\partial \varphi}{\partial t}(t,X) +  a(t,X)^\T\frac{\partial \varphi}{\partial X}(t,X) = 0
    \end{array}\right.
  \end{equation*}
  The solution $\varphi$ has the following expression
  \begin{equation*}
    \forall t\in [0;T],~\forall X\in \X,~\varphi(t,X) = \varphi_0(X^a(0,t,X))
  \end{equation*}
  where $(t,t_0,X)\in [0;T]\times [0;T]\times \X\rightarrow X^a(t,t_0,X)$ is the flow of equation (\ref{eq_transport_a}).
\end{lemma}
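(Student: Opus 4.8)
The plan is to establish the statement by the classical method of characteristics, taking as given Lemma \ref{lemma_regularity_flow}, which (applied on the time interval $[0;T]$, the sublinear growth bound on $a$ ensuring via Grönwall's lemma that the flow is globally defined there) guarantees that $(t,t_0,X)\mapsto X^a(t,t_0,X)$ is $C^1$ on $[0;T]^2\times\X$. The one structural ingredient I will use repeatedly is the flow (semigroup) identity $X^a(t_2,t_1,X^a(t_1,t_0,X)) = X^a(t_2,t_0,X)$ for all $t_0,t_1,t_2\in[0;T]$ and $X\in\X$: viewed as functions of $t_2$, both sides solve the ODE $\dot Y = a(\cdot,Y)$ and agree at $t_2=t_1$, so they coincide by uniqueness in the Cauchy--Lipschitz theorem (local Lipschitz continuity in $X$ coming from the continuous Jacobian $\partial a/\partial X$).

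\textbf{Existence.} I would set $\varphi(t,X) = \varphi_0(X^a(0,t,X))$. Since $\varphi_0\in C^1(\X\rightarrow\R)$ and $X^a\in C^1$, the composition satisfies $\varphi\in C^1([0;T]\times\X\rightarrow\R)$, and $X^a(0,0,X)=X$ gives the initial condition $\varphi(0,X)=\varphi_0(X)$. To verify the PDE, fix $(t_0,X_0)\in[0;T]\times\X$ and consider the characteristic curve $\xi(s) = X^a(s,t_0,X_0)$, for which $\xi(t_0)=X_0$ and $\xi'(s)=a(s,\xi(s))$. Using the flow identity with $t_2=0$, $t_1=s$, one gets $\varphi(s,\xi(s)) = \varphi_0(X^a(0,s,X^a(s,t_0,X_0))) = \varphi_0(X^a(0,t_0,X_0))$, which does not depend on $s$. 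Differentiating this constant function of $s$ at $s=t_0$ and using $\xi'(t_0)=a(t_0,X_0)$ yields $\displaystyle\frac{\partial\varphi}{\partial t}(t_0,X_0) + a(t_0,X_0)^\T\frac{\partial\varphi}{\partial X}(t_0,X_0) = 0$; since $(t_0,X_0)$ is arbitrary, $\varphi$ solves the transport equation.

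\textbf{Uniqueness.} Let $\psi\in C^1([0;T]\times\X\rightarrow\R)$ be any solution with $\psi(0,\cdot)=\varphi_0$. Fixing $(t_0,X_0)$ and the same characteristic $\xi(s)=X^a(s,t_0,X_0)$, the chain rule together with the PDE gives $\displaystyle\frac{\intd}{\intd s}\psi(s,\xi(s)) = \frac{\partial\psi}{\partial t}(s,\xi(s)) + a(s,\xi(s))^\T\frac{\partial\psi}{\partial X}(s,\xi(s)) = 0$, so $s\mapsto\psi(s,\xi(s))$ is constant on $[0;T]$. Evaluating at $s=t_0$ and at $s=0$ gives $\psi(t_0,X_0) = \psi(0,\xi(0)) = \varphi_0(X^a(0,t_0,X_0)) = \varphi(t_0,X_0)$, hence $\psi=\varphi$, which also identifies the solution with the stated explicit formula.

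\textbf{Main obstacle.} Once Lemma \ref{lemma_regularity_flow} is available there is essentially no hard analytic step: the only points demanding care are (i) that the flow is actually defined on all of $[0;T]$, which is handled by the sublinear growth hypothesis and Grönwall, and (ii) setting up the semigroup identity rigorously from ODE uniqueness so that $X^a(0,s,X^a(s,t_0,X_0))$ collapses to $X^a(0,t_0,X_0)$. Everything else is routine differentiation along characteristics.
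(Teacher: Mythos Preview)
Your proof is correct and is the standard method-of-characteristics argument. Note that the paper does not actually supply a proof of this lemma: it is quoted as a classical result from \cite{Golse2013}, theorem 2.2.4, and used as a black box in the uniqueness part of Corollary~1. Your write-up is precisely the textbook proof one would find in that reference, so there is nothing to compare beyond saying that you have reconstructed the omitted argument faithfully.
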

\begin{lemma}
  Let $\mu_0\in \probaspace_2(\Z)$, g satisfying assumptions (A1), (A2) and (A3), and $(X^0,\theta)$ a random variable of distribution $\mu_0$. Then the unique measure-solution to the transport equation (\ref{weak_transport_macro}) is $t\in \R_+\mapsto \mu[t]\in \probaspace_2(\Z)$ where for all $t\in \R_+$, $\mu[t]$ is the probability distribution of ${z}^t = (X_\infty(t,X^0,\theta),\theta)$.
\end{lemma}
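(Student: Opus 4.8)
The statement restates Corollary 1 in the form of a lemma; since the companion lemma just above already shows that the pushforward $t\mapsto\mu[t]:=(X_\infty(t,\cdot))_\#\mu_0$ is a measure-solution of (\ref{weak_transport_macro}), the plan is to establish only the uniqueness half. First I would take an arbitrary measure-solution $t\mapsto\nu[t]\in\probaspace_1(\Z)$ and observe that evaluating the weak formulation at $t=0$ forces $\nu[0]=\mu_0$. Since $g$ satisfies (A1)--(A2), Lemma \ref{lemma_flow_fix_measure} furnishes the characteristic flow $X^\nu(t,t_0,X,\theta)$ of the ordinary differential equation (\ref{flow_nu}) driven by the now-fixed velocity field $\G(\nu[\cdot],\cdot)$; invoking (A3) and the at-most-linear growth of this field, Lemma \ref{lemma_regularity_flow} (applied with $\theta$ carried along as a parameter) gives that $X^\nu$ is $C^1$ in all its arguments, and two-sided Grönwall estimates coming from (A1) show that $X^\nu(0,t,\cdot,\theta)$ is a proper $C^1$-diffeomorphism of $\X$ for each $t,\theta$.

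The core of the argument is to feed carefully chosen test functions into the weak formulation of $\nu$. Given $\varphi_0\in\C^1_0(\Z\rightarrow\R)$, I would set $\varphi(t,X,\theta)=\varphi_0(X^\nu(0,t,X,\theta),\theta)$; by Lemma \ref{lemma_linear_transport}, applied for each fixed $\theta$, this is the unique $C^1$ solution of the linear transport equation $\partial_t\varphi(t,z)+\partial_X\varphi(t,z)^\T\G(\nu[t],z)=0$ with $\varphi(0,\cdot)=\varphi_0$, and the properness of $X^\nu(0,t,\cdot,\theta)$ together with the linear growth bounds guarantees that $\varphi$ and its derivatives vanish as $|z|\rightarrow+\infty$, so $\varphi$ is an admissible test function. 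Plugging this $\varphi$ into (\ref{weak_transport_macro}) for $\nu$, the whole right-hand integrand reduces to $\partial_t\varphi+\partial_X\varphi^\T\G(\nu[\tau],z)=0$, whence $t\mapsto\int_\Z\varphi(t,z)\nu[t](\intd z)$ is constant and equal to $\int_\Z\varphi_0(z)\mu_0(\intd z)$. Fixing $t>0$ and an arbitrary test function $\psi$, and applying this identity with $\varphi_0=\psi(X^\nu(t,0,\cdot),\cdot)$ (again in $\C^1_0$ by properness), I would obtain $\int_\Z\psi\,\intd\nu[t]=\int_\Z\psi(X^\nu(t,0,X,\theta),\theta)\,\mu_0(\intd X,\intd\theta)$, that is, $\nu[t]=(X^\nu(t,0,\cdot,\cdot))_\#\mu_0$, the flow acting on the $X$-coordinate only.

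To finish, I would substitute this pushforward identity back into the defining ODE for $X^\nu$, which becomes $\partial_tX^\nu(t,0,X,\theta)=\int_\Z g(X^\nu(t,0,X,\theta),\theta,X^\nu(t,0,X',\theta'),\theta')\,\mu_0(\intd X',\intd\theta')$ — exactly the functional equation (\ref{equ_flow_X}) satisfied by $X_\infty$. Conditions (1)--(3) of Theorem \ref{thm_flow_X} hold for $(t,X,\theta)\mapsto X^\nu(t,0,X,\theta)$ by (A1) together with $\nu[t]\in\probaspace_1(\Z)$, by Lemma \ref{lemma_regularity_flow}, and by construction respectively, so the uniqueness part of Theorem \ref{thm_flow_X} gives $X^\nu(t,0,\cdot)=X_\infty(t,\cdot)$, and hence $\nu[t]=(X_\infty(t,\cdot))_\#\mu_0=\mu[t]$ for all $t\in\R_+$.

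The step I expect to be the main obstacle is producing, for every $\varphi_0\in\C^1_0$, a genuine test function $\varphi$ by solving the backward linear transport equation: this needs the $C^1$-regularity of the nonlocal flow $X^\nu$, which is precisely where (A3) enters through Lemma \ref{lemma_regularity_flow}, and a quantitative two-sided Grönwall control from (A1) ensuring $X^\nu(0,t,\cdot,\theta)$ is proper so that $\varphi_0\circ X^\nu(0,t,\cdot)$ keeps its decay at infinity. As remarked after Corollary 1, (A3) could in principle be dropped by instead approximating arbitrary $\C^1_0$ test functions through mollification and passing to the limit, at the cost of extra technical work.
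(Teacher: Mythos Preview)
Your proposal is correct and follows essentially the same route as the paper: construct the backward linear transport solution $\varphi(t,X,\theta)=\varphi_0(X^\nu(0,t,X,\theta),\theta)$ via Lemmas \ref{lemma_flow_fix_measure}--\ref{lemma_linear_transport}, plug it into the weak formulation of $\nu$ to obtain that $\nu[t]$ is the pushforward of $\mu_0$ by $X^\nu(t,0,\cdot)$, and then identify $X^\nu$ with $X_\infty$ by the uniqueness part of Theorem \ref{thm_flow_X}. The only cosmetic difference is that the paper recovers the pushforward by introducing the auxiliary random variable $z^{-t}=(X^\nu(0,t,X^t,\theta^t),\theta^t)$ and showing its law is $\mu_0$, whereas you pick $\varphi_0=\psi(X^\nu(t,0,\cdot),\cdot)$; your version is arguably more careful in flagging the properness of $X^\nu(0,t,\cdot,\theta)$ needed for $\varphi$ to stay in $\C^1_0$, a point the paper passes over.
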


\begin{proof}
  Let $t\in \R_+\mapsto \nu[t]\in \probaspace_1(\Z)$ be a measure solution to equation (\ref{weak_transport_macro}). Let us consider the flow $X^\nu$ associated to the differential equation (\ref{flow_nu}).
  Thanks to assumption (A3), we have by Leibniz integral rule
  \begin{equation*}
    \forall t\in \R_+,~\forall (X,\theta)\in \Z,~\frac{\partial \G}{\partial X}(\nu[t],X,\theta) = \int_\Z \frac{\partial g}{\partial X}(X,\theta,X',\theta')\nu[t](\intd X',\intd \theta')
  \end{equation*}
  According to lemma \ref{lemma_regularity_flow}, for all $\theta\in \Theta$, the map $(t,t_0,X)\in\R_+\times \R_+\times \X\mapsto X^\nu(t,t_0,X,\theta)$ is continuously differentiable with respect to $t,t_0$ and $X$.
  
  Let $\varphi_0\in \C^1_0(\Z\rightarrow \R)$, i.e. continuously differentiable and such that $\displaystyle \lim_{|z|\rightarrow +\infty}|\varphi_0(z)| + \left|\frac{\partial\varphi_0(z)}{\partial z}\right| = 0$. We consider the linear transport equation of unknown $\varphi$
  \begin{equation*}
    \forall z \in \Z,~\left\{\begin{array}{l}
    \varphi(0,z) = \varphi_0(z)\\
    \forall t \in \R_+,~\displaystyle \frac{\partial\varphi}{\partial t}(t,z) +  \frac{\partial \varphi}{\partial z}(t,z)^\T \G(\nu[t],z) = 0
    \end{array}\right.
  \end{equation*}
  Then, using lemma \ref{lemma_linear_transport}, the unique solution of above equation is
  \begin{equation*}
    \forall t\in \R_+, ~\forall (X,\theta)\in \Z,~\varphi(t,X,\theta) = \varphi_0(X^\nu(0,t,X,\theta),\theta)
  \end{equation*}
  From previously, we have that $\varphi\in \C^1_0(\R_+\times \Z\rightarrow \R)$. As $t\mapsto \nu[t]$ is a measure solution, we can write for all time $t\in \R_+$
  \begin{equation*}
    \begin{aligned}
      &\int_\Z \varphi(t,z)\nu[t](\intd z) - \int_\Z\varphi(0,z)\mu_0(\intd z) =\\
      &\int_0^t\int_\Z\left(\frac{\partial\varphi}{\partial t}(s,z) +  \frac{\partial \varphi}{\partial z}(s,z)^\T \G(\nu[s],z)\right)\nu[s](\intd z)\intd s\\
      &\text{so }\int_\Z\varphi(t,z)\nu[t](\intd z) = \int_\Z\varphi_0(z)\mu_0(\intd z)
    \end{aligned}
  \end{equation*}
  If we introduce for all time $t\in \R_+$, a random variable $z^t = (X^t,\theta^t)$ of distribution $\nu[t]$, we can rewrite the above equation as $\expect_{\nu[t]}(\varphi(t,X^t,\theta^t)) = \expect_{\mu_0}(\varphi_0(X^0,\theta))$. Let us introduce the random variable $z^{-t} = (X^{-t},\theta^t) = (X^\nu(0,t,X^t,\theta^t),\theta^t)$ and $\nu_{-t}[t]$ its probability distribution. We have then
  \begin{equation*}
    \begin{aligned}
      &\expect_{\nu[t]}(\varphi(t,X^t,\theta^t)) = \expect_{\nu_{-t}[t]}(\varphi(t,X^\nu(t,0,X^{-t}),\theta^t))\\
      &\varphi(t,X^\nu(t,0,X^{-t},\theta^t),\theta^t) = \varphi_0(X^\nu(0,t,X^\nu(t,0,X^{-t},\theta^t),\theta^t),\theta^t) = \varphi_0(X^{-t},\theta^t)\\
      & \expect_{\nu_{-t}[t]}(\varphi_0(X^{-t},\theta^t)) = \expect_{\mu_0}(\varphi_0(X^0,\theta))
    \end{aligned}
  \end{equation*}
  Hence, as the last equality holds for any $\varphi_0$ verifying $\displaystyle \lim_{|z|\rightarrow +\infty}|\varphi_0(z)| + \left|\frac{\partial\varphi_0(z)}{\partial z}\right| = 0$, the distributions $\nu_{-t}[t]$ and $\mu_0$ are equal for all time $t\in \R_+$. $z^t$ has the same distribution as the random variable $(X^\nu(t,0,X^0,\theta),\theta)$ and therefore for all $t\in \R_+$ and for all $(X,\theta)\in \Z$, we have
  \begin{equation*}
    \begin{aligned}
      &\int_\Z g(X^\nu(t,0,X,\theta),\theta,X',\theta')\nu[t](\intd X',\intd \theta')\\
      &= \int_\Z g(X^\nu(t,0,X,\theta),\theta,X^\nu(t,0,X',\theta'),\theta')\mu_0(\intd X',\intd \theta') = \frac{\partial X^\nu}{\partial t}(t,0,X,\theta)
    \end{aligned}
  \end{equation*}
  By unicity of the characteristic flow, it follows that $\forall z\in \Z,~\forall t\in \R_+,~X^\nu(t,0,z) = X_\infty(t,0,z)$. 
\end{proof}

\section{Proofs of the subsection \ref{sub_dobrushin}}
\label{proof_sub_dobrushin}

\subsection{Proof of lemma \ref{lemma_empirical_flow}}

\label{proof_lemma_empirical_flow}

\begin{lemma}
  \label{lemma_empirical_flow}
  Let $\mu_0\in\probaspace_1(\Z)$ and $g:\Z^2\rightarrow \X$ a transition function satisfying assumptions (A1), (A2). For any initial configuration of the population $Z_N^0\in \Z^N$, with $N >1$, there exists an unique function $\hat X(Z_N^0,.,.):(t,z)\mapsto \hat X(Z_N^0,t,z)\in \X$ such that
  \begin{equation}
    \begin{aligned}
      &\forall z = (X,\theta) \in \Z,\left\{\begin{array}{l}
      \hat X(Z_N^0,0,X,\theta) = X\\
      \displaystyle \forall t\in \R_+,~\frac{\partial \hat X}{\partial t}(Z_N^0,t,X,\theta) = \G_N(\mu[t,Z_N^0],\hat X(Z_N^0,t,X,\theta),\theta)
      \end{array}\right.\\
      &\text{where }\forall z \in \Z,~ \G_N(\mu[t,Z_N^0],z) = \int_\Z g_N(z,z')\mu[t,Z_N^0](\intd z')\\
      &\forall (z,z')\in \Z^2,~g_N(z,z') = \frac{N}{N-1}g(z,z')-\frac{g(z,z)}{N-1}
    \end{aligned}
    \label{eq_empirical_flow}
  \end{equation}
\end{lemma}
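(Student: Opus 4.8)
The plan is to observe that, once the microscopic system (\ref{system_micro}) has been solved, equation (\ref{eq_empirical_flow}) for $\hat X(Z_N^0,\cdot,\cdot)$ is merely a non-autonomous ordinary differential equation in $\X$, parametrised by $\theta\in\Theta$, whose velocity field is locally Lipschitz in the state variable and has linear growth in it. Existence and uniqueness of the flow then follow from the Cauchy--Lipschitz theorem, along exactly the same lines as Theorem \ref{thm_flow_X} and Lemma \ref{lemma_flow_fix_measure}; note that the dependence is one-way (the empirical measure is built from the microscopic trajectories, which do not themselves depend on $\hat X$), so there is no circularity to untangle.

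First I would record that, under assumptions (A1) and (A2), the system (\ref{system_micro}) has a unique global solution $t\in\R_+\mapsto Z_N(t,Z_N^0)=(X_i(t,Z_N^0),\theta_i)_{1\le i\le N}$ (the Cauchy--Lipschitz argument carried out for Proposition 1 in the Schneider case carries over using (A1)); hence the empirical measure $t\mapsto\mu[t,Z_N^0]=\frac1N\sum_{i=1}^N\delta_{z_i(t,Z_N^0)}$ is well defined and, by Proposition 3, continuous for the metric $W_1$. Writing the velocity field out gives, for each fixed $\theta\in\Theta$,
\begin{equation*}
  \G_N(\mu[t,Z_N^0],X,\theta)=\frac1{N-1}\sum_{j=1}^N g(X,\theta,X_j(t,Z_N^0),\theta_j)-\frac1{N-1}g(X,\theta,X,\theta)=:a_\theta(t,X),
\end{equation*}
a finite sum. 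Since each $t\mapsto X_j(t,Z_N^0)$ is continuous and (A2) forces $g$ to be locally Lipschitz in its first and third arguments separately, $a_\theta$ is continuous in $(t,X)$; moreover, the self-term $X\mapsto g(X,\theta,X,\theta)$ is locally Lipschitz as a composition, so $X\mapsto a_\theta(t,X)$ is locally Lipschitz, locally uniformly in $t$.

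Next I would establish the linear-growth bound. By (A1), $|g(X,\theta,X_j,\theta_j)|\le K_1(1+|X|+|X_j|)$ and $|g(X,\theta,X,\theta)|\le K_1(1+2|X|)$, so for every $T>0$ the constant $C_{N,T}:=\frac{K_1}{N-1}\bigl(N+2+\sum_{j=1}^N\sup_{t\in[0,T]}|X_j(t,Z_N^0)|\bigr)$ is finite (by boundedness of each microscopic trajectory on $[0,T]$), independent of $\theta$, and satisfies $\sup_{t\in[0,T]}|a_\theta(t,X)|\le C_{N,T}(1+|X|)$. The Cauchy--Lipschitz theorem for non-autonomous equations then gives, for each $(X,\theta)\in\Z$, a unique maximal solution $t\mapsto\hat X(Z_N^0,t,X,\theta)$ of $\dot X=a_\theta(t,X)$ with $\hat X(Z_N^0,0,X,\theta)=X$; the linear-growth bound together with Grönwall's lemma precludes blow-up in finite time, so this solution is defined on all of $\R_+$. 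Collecting these solutions yields $\hat X(Z_N^0,\cdot,\cdot)$, and uniqueness is inherited pointwise. Equivalently, one may observe that $g_N(z,z')=\frac{N}{N-1}g(z,z')-\frac1{N-1}g(z,z)$ satisfies analogues of (A1) and (A2) with $N$-dependent constants, and that $\mu[0,Z_N^0]\in\probaspace_2(\Z)$ trivially, so the claim is the special case of Lemma \ref{lemma_flow_fix_measure} --- proved by the very same global Lipschitz estimate --- with $g$ replaced by $g_N$ and the reference trajectory $\nu[t]$ by $\mu[t,Z_N^0]$.

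There is no genuine obstacle: the statement is a routine transcription of the Cauchy--Lipschitz argument already used for Theorem \ref{thm_flow_X}. The only point deserving care is global existence in time, which rests on combining the linear-growth estimate from (A1) with the a priori boundedness of the microscopic trajectories $X_j(\cdot,Z_N^0)$ on compact time intervals; once that is noted, Grönwall's lemma finishes the argument.
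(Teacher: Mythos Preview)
Your proof is correct and follows essentially the same route as the paper's: fix $\theta$, observe that $(t,X)\mapsto\G_N(\mu[t,Z_N^0],X,\theta)$ is continuous, locally Lipschitz in $X$ by (A2), and has linear growth in $X$ by (A1) combined with the boundedness of the microscopic trajectories on compact time intervals, then apply Cauchy--Lipschitz and Gr\"onwall. The paper's proof additionally records the identity $\hat X(Z_N^0,t,X_i^0,\theta_i)=X_i(t,Z_N^0)$ and the resulting rewriting of the flow equation against $\mu[0,Z_N^0]$, but these are preparatory observations for Theorem~\ref{thm_dobrushin} rather than part of the lemma itself.
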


Let $\theta\in \Theta$. We consider the velocity field $(t,X)\in \R_+\times \X\mapsto \G(\mu[t,Z_N^0],X,\theta)\in \X$, where $\G_N$ is the non local velocity field defined in equation (\ref{velocity_micro}). Let $t\in \R_+$ and $X_1,X_2\in \X$.
\begin{equation*}
  \begin{aligned}
    &|\G_N(\mu[t,Z_N^0],X_1,\theta) - \G_N(\mu[t,Z_N^0],X_2,\theta)|\leq\\
    &\frac{N}{N-1} \int_\Z |g(X_1,\theta,X',\theta') - g(X_2,\theta,X',\theta')|\mu[t,Z_N^0](\intd X',\intd \theta')+ \frac{|g(X_1,\theta,X_1,\theta)-g(X_2,\theta,X_2,\theta)|}{N-1}\\
    &\leq K_2\left(\frac{N}{N-1}(1+2M^1(t,Z_N^0)) + \frac{1+|X_1|+|X_2|}{N-1}\right)|X_1-X_2|\\
    &\text{ with }M^1(t,Z_N^0) = \int_\Z|z|\mu[t,Z_N^0](\intd z)
  \end{aligned}
\end{equation*}
So $(t,X)\in \R_+\times \X\mapsto \G_N(\mu[t,Z_N^0],X)$ is locally Lipschitz continuous with respect to the variable $X$. Besides, for any time $T >0$ and $X\in \X,t\in [0;T]$
\begin{equation*}
  |\G_N(\mu[t,Z_N^0],X,\theta)|\leq K_1\left(\frac{N}{N-1}\max_{0\leq t\leq T} M^1(t,Z_N^0) + \frac{N+2}{N-1}(1+|X|)\right)
\end{equation*}
According to Cauchy-Lipschitz theorem, there exists an unique flow for the differential equation of velocity $(t,X)\in \R_+\times \X\mapsto \G_N(\mu[t,Z_N^0],X,\theta)$.
\begin{equation*}
  \forall (X,\theta)\in \Z, ~\left\{\begin{array}{l}
  \hat{X}(Z_N^0,0,X,\theta) = X\\
  \displaystyle \forall t\in \R_+,~\frac{\partial \hat{X}}{\partial t}(Z_N^0,t,X,\theta) = \G_N(\mu[t,Z_N^0],\hat{X}(Z_N^0,t,X,\theta),\theta)
  \end{array}\right.
\end{equation*}
Let $t\in \R_+\mapsto (X_i(t,Z_N^0))_{1\leq i\leq N}$ be the trajectories solution of the system (\ref{system_micro}). For all $i\in \llbracket 1;N\rrbracket$, $X_i(0,Z_N^0) = \hat{X}(Z_N^0,0,X_i^0,\theta_i) = X_i^0$ and $\forall t\in \R_+,$ $\displaystyle \frac{\intd X_i}{\intd t}(t,Z_N^0) = \G_N(\mu[t,Z_N^0],X_i(t,Z_N^0),\theta_i)$. It follows by unicity that $\forall t\in \R_+,$ $\hat{X}(Z_N^0,t,X_i^0,\theta_i) = X_i(t,Z_N^0)$. We finally obtain that
\begin{equation*}
  \begin{aligned}
    &\forall (X,\theta)\in \Z, ~\frac{\partial \hat X}{\partial t}(Z_N^0,t,X,\theta) = \frac{1}{N}\sum_{i = 1}^N g_N(\hat X(Z_N^0,t,X,\theta),\theta,X_i(t,Z_N^0),\theta_i)\\
    &= \frac{1}{N}\sum_{i = 1}^N g_N(\hat X(Z_N^0,t,X,\theta),\theta, \hat X(Z_N^0,t,X_i^0,\theta_i),\theta_i)\\
    & = \int_\Z g_N(\hat X(Z_N^0,t,X,\theta),\theta,\hat X(Z_N^0,t,X',\theta'),\theta')\mu[0,Z_N^0](\intd X',\intd \theta')
  \end{aligned}
\end{equation*}

\subsection{proof of theorem \ref{thm_dobrushin}}
\label{proof_thm_dobrushin}
For the simplicity of notations, we introduce the the map $\hat{Z}(Z_N^0,.,.):(t,X,\theta)\in \R_+\times \Z\mapsto (\hat X(Z_N^0,t,X,\theta),\theta)\in \Z$ and the map $Z:(t,X,\theta)\in \Z\mapsto (X_\infty(t,X,\theta),\theta)\in \Z$. Let $\pi_0$ be a probability distribution in the set of couplings $\Pi(\mu[0,Z_N^0],\mu_0)$ and let $(z_1,z_2)$ be a random variable of distribution $\pi_0$. We consider for all time $t\in \R_+$, the distribution $\pi_t$ of the random variable $(\hat Z(Z_N^0,t,z_1),Z(t,z_2))$. Then it is straightforward that $\pi_t$ is in the set of couplings $\Pi(\mu[t,Z_N^0],\mu[t])$. We then have the following inequality.
\begin{equation*}
  \begin{aligned}
    & W_1\left(\mu[t,Z_N^0],\mu[t]\right)\leq \iint_{\Z^2}|z_1-z_2|\pi_t(\intd z_1, \intd z_2) \leq \iint_{\Z^2}|\hat Z(Z_N^0,t,z_1)-Z(t,z_2)|\pi_0(\intd z_1,\intd z_2)
  \end{aligned}
\end{equation*}
Let $z_1,z_2\in \Z$.
\begin{equation}
  \begin{aligned}
    & |\hat Z(Z_N^0,t,z_1)-Z(t,z_2)| \leq |z_1-z_2| + \int_0^t\left|\int_{\Z}g_N(\hat Z(Z_N^0,s,z_1),\hat Z(Z_N^0,s,z_1'))\mu[0,Z_N^0](\intd z_1')\right.\\
    & \left.- \int_{\Z}g(Z(s,z_2),Z(s,z_2'))\mu_0(\intd z_2')\right|\intd s\\
    & |\hat Z(Z_N^0,t,z_1)-Z(t,z_2)| \leq |z_1-z_2| + \int_0^t A_N(s,Z_N^0,z_1)\intd s + \int_0^tB_N^{\pi_0}(s,Z_N^0,z_1,z_2)\intd s\\
    &\text{where } \forall t\in \R_+,~ A_N(t,Z_N^0,z_1) = \\
    &\int_\Z |g_N(\hat Z(Z_N^0,t,z_1),\hat Z(Z_N^0,t,z_1')) - g(\hat Z(Z_N^0,t,z_1),\hat Z(Z_N^0,t,z_1'))|\mu[0,Z_N^0](\intd z_1')\\
    &\text{and }B_N^{\pi_0}(t,Z_N^0,z_1,z_2) = \iint_{\Z^2} |g(\hat Z(Z_N^0,t,z_1),\hat Z(Z_N^0,t,z_1'))-g(Z(t,z_2),Z(t,z_2'))|\pi_0(\intd z_1',\intd z_2')
  \end{aligned}
  \label{equ_A_B}
\end{equation}
Let us consider the term depending on $A_N$.
\begin{equation*}
  \begin{aligned}
    &\forall t\in \R_+,~ A_N(t,Z_N^0,z_1) = \\
    &\frac{1}{N-1}\int_\Z |g(\hat Z(Z_N^0,t,z_1),\hat Z(Z_N^0,t,z_1'))-g(\hat Z(Z_N^0,t,z_1),\hat Z(Z_N^0,t,z_1))|\mu[0,Z_N^0](\intd z_1')
  \end{aligned}
\end{equation*}
As $g$ satisfies assumptions (A2) and (A4), we have for all $z_1,z_1',z_2,z_2'\in \Z$
\begin{equation*}
  |g(z_1,z_1')-g(z_2,z_2')|\leq K_{24}(1+|z_1|+|z_1'|+|z_2|+|z_2'|)(|z_1-z_2|+|z_1'-z_2'|)
\end{equation*}
We use also the following notation for all time $t\in \R_+$ $\|\hat Z(Z_N^0,t,.)\| = \|\hat X(Z_N^0,t,.)\|_\Y$
\begin{equation*}
  \begin{aligned}
    & A_N(t,Z_N^0,z_1) \leq \\
    &\frac{K_{24}}{N-1}\int_\Z \left(1+\|\hat Z(Z_N^0,t,.)\|(4+3|z_1|+|z_1'|)\right)\|\hat Z(Z_N^0,t,.)\|(2+|z_1|+|z_1'|)\mu[0,Z_N^0](\intd z_1')
  \end{aligned}
\end{equation*}
We now look for an upper-bound of the function $t\in \R_+\mapsto \|\hat Z(Z_N^0,t,.)\|$.
\begin{equation*}
  \begin{aligned}
    &\forall z,z'\in \Z,~ |g_N(z,z')|\leq \frac{K_1}{N-1}(N+1 + (N+2)|z|+N|z'|)\leq \frac{N+2}{N-1}K_1(1+|z|+|z'|)\\
    &|g_N(z,z')|\leq K_N(1+|z|+|z'|)\text{ with }K_N = \frac{N+2}{N-1}K_1
  \end{aligned}
\end{equation*}
Now let us consider the empirical characteristic $\hat Z(Z_N^0,.,.)$.
\begin{equation*}
  \begin{aligned}
    &\forall t\in \R_+,~\forall z\in \Z,~|\hat Z(Z_N^0,t,z)|\leq |z| + \int_0^t\int_{\Z}|g_N(\hat Z(Z_N^0,s,z),\hat Z(Z_N^0,s,z'))|\mu[0,Z_N^0](\intd z')\\
    &\|\hat Z(Z_N^0,t,.)\| \leq 1+K_N\int_0^t(1+(2+M^1(0,Z_N^0))\|\hat Z(Z_N^0,s,.)\|)\intd s \\
    &\text{ with } M^1(0,Z_N^0) = \int_\Z |z'|\mu[0,Z_N^0](\intd z')\\
    &\text{so by Grönwall lemma }\|\hat Z(Z_N^0,t,.)\|\leq \frac{(3+2M^1(0,Z_N^0))\exp(2K_N(1+M^1(0,Z_N^0))t)-1}{2(1+M^1(0,Z_N^0))}
  \end{aligned}
\end{equation*}
We use the notation $\forall t\in \R_+,~ M_N(t,Z_N^0) = \displaystyle \frac{(3+2M^1(0,Z_N^0))\exp(2K_N(1+M^1(0,Z_N^0))t)-1}{2(1+M^1(0,Z_N^0))}$. Then we obtain that
\begin{equation*}
  \begin{aligned}
    & A_N(t,Z_N^0,z_1) \leq \\
    &\frac{K_{24}M_N(t,Z_N^0)}{N-1}\left(2+|z_1|+M^1(0,Z_N^0) + M_N(t,Z_N^0)\left((4+3|z_1|)(2+|z_1|) + (6+4|z_1|)M^1(0,Z_N^0)\right.\right.\\
    &\left.\left.+ M^2(0,Z_N^0)\right)\right)\\
    &\text{ with }M^2(0,Z_N^0) = \int_\Z |z'|^2\mu[0,Z_N^0](\intd z')
  \end{aligned}
\end{equation*}
Now let us consider the term depending on $B_N^{\pi_0}$.
\begin{equation*}
  \begin{aligned}
    &\forall t\in \R_+,~B_N^{\pi_0}(t,Z_N^0,z_1,z_2)\leq\\
    &K_{24}\iint_{\Z^2}\left(1+\|\hat Z(Z_N^0,t,.)\|(2+|z_1|+|z_1'|) + \|Z(t,.)\|(2+|z_2|+|z_2'|)\right)\\
    &\times \left(|\hat Z(Z_N^0,t,z_1)-Z(t,z_2)|+|\hat Z(Z_N^0,t,z_1')-Z(t,z_2')|\right)\pi_0(\intd z_1',\intd z_2')
  \end{aligned}
\end{equation*}
The same reasonning as previously can be applied here to the function $t\in\R_+\mapsto \|Z(t,.)\|$ to show that $\forall t\in \R_+,~ \|Z(t,.)\|\leq \displaystyle \frac{(3+2M^1_{\mu_0})\exp(2K_1(1+M^1_{\mu_0})t)-1}{2(1+M^1_{\mu_0})} = M(t)$, which leads to
\begin{equation*}
  \begin{aligned}
    &B_N^{\pi_0}(t,Z_N^0,z_1,z_2)\leq \\
    &K_{24}(1+M_N(t,Z_N^0)(2+R_0+|z_1|) + M(t)(2+R_0+|z_2|))\left(|\hat Z(Z_N^0,t,z_1)-Z(t,z_2)|\right.\\
    &\left.+\iint_{\Z^2}|\hat Z(Z_N^0,t,z'_1)-Z(t,z'_2)|\pi_0(\intd z_1',\intd z_2')\right)
  \end{aligned}
\end{equation*}
We use the previous inequalities to find an upper-bound of the quantity $D^{\pi_0}_N(t) = \displaystyle \iint_{\Z^2}|\hat Z(Z_N^0,t,z_1)-Z(t,z_2)|\pi_0(\intd z_1,\intd z_2)$.
\begin{equation*}
  \begin{aligned}
    & D^{\pi_0}_N(t) \leq \iint_{\Z^2}|z_1-z_2|\pi_0(\intd z_1,\intd z_2) + \int_0^t\int_\Z A(s,Z_N^0,z_1)\mu[0,Z_N^0](\intd z_1)\intd s +\\
    &\int_0^t\iint_{\Z^2}B_N^{\pi_0}(s,Z_N^0,z_1,z_2)\pi_0(\intd z_1,\intd z_2)\\
    & \forall t\in \R_+,\\
    &\int_\Z A_N(t,Z_N^0,z_1)\mu[0,Z_N^0](\intd z_1) \leq \\
    &\frac{2K_{24}M_N(t,Z_N^0)}{N-1}\left(1+M^1(0,Z_N^0) + 4M_N(t,Z_N^0)\left(1+2M^1(0,Z_N^0) + (M^1(0,Z_N^0))^2 + M^2(0,Z_N^0)^2\right)\right)\\
    &\text{we set }\\
    &E_N(t,Z_N^0) = 2K_{24}M_N(t,Z_N^0)\left(1+M^1(0,Z_N^0) + 4M_N(t,Z_N^0)\left(1+2M^1(0,Z_N^0) + (M^1(0,Z_N^0))^2 \right.\right.\\
    &\left.\left.+ M^2(0,Z_N^0)^2\right)\right)\\
    &\forall t\in \R_+,~ \iint_{\Z^2}B_N^{\pi_0}(t,z_1,z_2)\pi_0(\intd z_1,\intd z_2)\leq f_N(t,Z_N^0)\iint_{\Z^2}|\hat Z(Z_N^0,t,z_1)-Z(t,z_2)|\pi_0(\intd z_1,\intd z_2)\\
    &\text{where }f_N(t,Z_N^0) = 2K_{24}(1+M_N(t,Z_N^0)(2+R_0+M^1(0,Z_N^0)) + M(t)(2+R_0+M^1_{\mu_0}))\\
  \end{aligned}
\end{equation*}
\begin{equation*}
  \begin{aligned}
    &\text{so }D^{\pi_0}_N(t) \leq D^{\pi_0}_N(0) + \frac{1}{N-1}\int_0^tE_N(s,Z_N^0)\intd s + \int_0^tf_N(s,Z_N^0)D^{\pi_0}_N(s)\intd s\\
    & \text{by Grönwall lemma }D^{\pi_0}_N(t) \leq e^{F_N(t,Z_N^0)}\left(D^{\pi_0}_N(0) + \frac{1}{N-1}\int_0^tE_N(s,Z_N^0)e^{-F_N(s,Z_N^0)}\intd s\right)\\
    &\text{where } F_N(t,Z_N^0) = \int_0^tf_N(s,Z_N^0)\intd s
  \end{aligned}
\end{equation*}
By taking the infimum over $\Pi(\mu[0,Z_N^0],\mu_0)$, we obtain the inequality
\begin{equation*}
  W_1(\mu[t,Z_N^0],\mu[t]) \leq e^{F_N(t,Z_N^0)}\left(W_1(\mu[0,Z_N^0],\mu_0) + \frac{1}{N-1}\int_0^tE_N(s,Z_N^0)e^{-F_N(s,Z_N^0)}\intd s\right)
\end{equation*}
Let us study the convergence of the upper bound when the sequence of random variables $(Z_N^0)_{N > 1}$ is evaluated at $\omega\in \Omega^*$ such that $\lim_{N\rightarrow +\infty}W_1(\mu[0,Z_N^0(\omega)],\mu_0) = 0$. The last convergence implies in particular that $M^1(0,Z_N^0(\omega))\conv{N}{+\infty}{}M^1_{\mu_0}$. As the distribution $\mu_0$ have a compact support of diameter upper bounded by $2R_0$, we can write
\begin{equation*}
  \begin{aligned}
    &\forall \pi_0\in \Pi(\mu[0,Z_N^0],\mu_0),~\iint_{\Z^2}|z_1-z_2|^2\pi_0(\intd z_1,\intd z_2) \leq 2R_0\iint_{\Z^2}|z_1-z_2|\pi_0(\intd z_1,\intd z_2)\\
    & W_2(\mu[0,Z_N^0],\mu_0)\leq 2R_0W_1(\mu[0,Z_N^0],\mu_0)
  \end{aligned}
\end{equation*}
So $W_2(\mu[0,Z_N^0(\omega)],\mu_0) \conv{N}{+\infty}{} 0$, and therefore $M^2(0,Z_N^0(\omega))\conv{N}{+\infty}{} M^2_{\mu_0}$. It follows that $\forall t\in \R_+$
\begin{equation*}
  \begin{aligned}
    &E_N(t,Z_N^0(\omega)) \conv{N}{+\infty}{} E_{\mu_0}(t) = 2K_{24}M(t)\left(1+M^1_{\mu_0} + 4M(t)\left(1+2M^1_{\mu_0} + (M^1_{\mu_0})^2 + M^2_{\mu_0}\right)\right)\\
    &f_N(t,Z_N^0(\omega)) \conv{N}{+\infty}{} f_{\mu_0}(t) = 2K_{24}M(t) \left(1+2M(t)(2+R_0+M^1_{\mu_0})\right)\\
    &F_N(t,Z_N^0(\omega)) \conv{N}{+\infty}{} \int_0^t f_{\mu_0}(\tau)\intd\tau
  \end{aligned}
\end{equation*}
Then we obtain by dominated convergence that $W_1(\mu[t,Z_N^0(\omega)],\mu[t]) \conv{N}{+\infty}{} 0$.

\subsection{Proof of corollary \ref{corollary_dobrushin}}

Let us start by establishing an estimation of the quantity $|X_1(t,Z_N^0)-X_\infty(t,z_1^0)|$ for any time $t$ and for any $Z_N^0 = (z_1^0,...,z_N^0)\in \Z^N$.
\begin{equation*}
  \begin{aligned}
    &|X_1(t,Z_N^0)-X_\infty(t,z_1^0)|\leq \int_0^t\left|\G_N(s,z_1(s,Z_N^0))-\G(s,Z(s,z_1^0))\right|\intd s\\
    &\leq \int_0^tA_N(s,Z_N^0,z_1^0)\intd s + \int_0^tB_N^{\pi_0}(s,Z_N^0,z_1^0,z_1^0)\intd s
  \end{aligned}
\end{equation*}
where the functions $A_N$ and $B^{\pi_0}_N$ are defined in equation (\ref{equ_A_B}) with $\pi_0$ any coupling of $\Pi(\mu[0,Z_N^0],\mu_0)$. In the proof of theorem \ref{thm_dobrushin} (cf appendix section \ref{proof_thm_dobrushin}), we have proved the following inequalities
\begin{equation*}
  \begin{aligned}
    &A_N(t,Z_N^0,z_1^0)\leq \frac{e_N(t,Z_N^0)}{N-1} \text{ with }e_N(t,Z_N^0) = K_{24}M_N(t,Z_N^0)(2+|z_1^0|+M^1(0,Z_N^0)\\
    &+ M_N(t,Z_N^0)((4+3|z_1^0|)(2+|z_1^0|)+M^1(0,Z_N^0)(6+4|z_1^0|) + M^2(0,Z_N^0)))\\
    &B_N^{\pi_0}(t,Z_N^0,z_1^0,z_1^0) \leq K_{24}(1+(M_N(t,Z_N^0)+M(t))(2+R_0+|z_1^0|))\left(|z_1(t,Z_N^0)-Z(t,z_1^0)|\phantom{\iint}\right.\\
    &\left. + \iint_{\Z^2}|z_1'-z_2'|\pi_t(\intd z_1',\intd z_2')\right)\\
    &\text{we set }h_N(t,Z_N^0) = K_{24}(1+(M_N(t,Z_N^0)+M(t))(2+R_0+|z_1^0|))\\
  \end{aligned}
\end{equation*}
We use also as in the previous proof the notation $\displaystyle D^{\pi_0}_N(t,Z_N^0)= \iint_{\Z^2}|z_1'-z_2'|\pi_t(\intd z_1',\intd z_2')$. The argument of Dobrushin leads to the following inequality
\begin{equation*}
  D^{\pi_0}_N(t,Z_N^0) \leq e^{F_N(t,Z_N)}\left(D^{\pi_0}_N(0,Z_N^0) + \frac{1}{N-1}\int_0^tE_N(s,Z_N^0)e^{-F_N(s,Z_N^0)}\intd s\right)
\end{equation*}
By gathering the previous inequalities, we obtain finally
\begin{equation*}
  \begin{aligned}
    &|X_1(t,Z_N^0)-X_\infty(t,z_1^0)|\leq \frac{1}{N-1}\int_0^te_N(s,Z_N^0)\intd s + \int_0^th_N(s,Z_N^0)e^{F_N(s,Z_N)}\left(D^{\pi_0}_N(0,Z_N^0)\phantom{\int}\right.\\
    &\left.+ \frac{1}{N-1}\int_0^sE_N(\tau,Z_N^0)e^{-F_N(\tau,Z_N^0)}\intd \tau\right)\intd s + \int_0^t h_N(s,Z_N^0)|X_1(s,Z_N^0)-X_\infty(s,z_1^0)|\intd s
  \end{aligned}
\end{equation*}
As this inequality holds for any $\pi_0\in \Pi(\mu[0,Z_N^0],\mu_0)$, we can take $\pi_0$ equal to the optimal plan, so that $D^{\pi_0}_N(t,Z_N^0) = W_1(\mu[0,Z_N^0],\mu_0)$. By setting $k_N(t,Z_N^0) = \displaystyle \frac{e_N(t,Z_N^0)}{N-1} + h_N(t,Z_N^0)e^{F_N(t,Z_N^0)}\left(W_1(\mu[0,Z_N^0],\mu_0) + \frac{1}{N-1}\int_0^tE_N(s,Z_N^0)e^{-F_N(s,Z_N^0)}\intd s\right)$, we obtain by Grönwall lemma
\begin{equation*}
  |X_1(t,Z_N^0)-X_\infty(t,z_1^0)|\leq \int_0^tk_N(s,Z_N^0)\exp\left(\int_0^s(h_N(s,Z_N^0)-h_N(\tau,Z_N^0))\intd \tau\right)\intd s
\end{equation*}
It is clear that for all time $t\in \R_+$ and for all $\omega\in \Omega^*$, we have that $k_N(t,Z_N^0(\omega))\conv{N}{\infty}{}0$ and $h_N(t,Z_N^0(\omega))\conv{N}{\infty}{}K_{24}(1+2M(t)(2+R_0+|z_1^0(\omega)|))$. By an argument of dominated convergence, we can obtain that
\begin{equation*}
  \forall t\in \R_+,~\forall \omega\in \Omega^*,~X_1(t,Z_N^0(\omega))\conv{N}{\infty}{}X_\infty(t,z_1^0(\omega))
\end{equation*}


\bibliographystyle{plainnat}      
\bibliography{biblio_springer}   

\end{document}